\title{Higher Cheeger ratios of features in Laplace-Beltrami eigenfunctions}
\author{Gary Froyland and Christopher P.\ Rock}
\date{9 August 2023}
\newtheorem{theorem}{Theorem}[section]
\newtheorem{corollary}[theorem]{Corollary}
\newtheorem{proposition}[theorem]{Proposition}
\theoremstyle{definition}
\newtheorem{definition}[theorem]{Definition}
\newtheorem{example}{Example}
\newtheorem*{convexity-curvature}{Convexity-curvature condition}
\DeclareMathOperator{\Div}{div}
\DeclareMathOperator{\Leb}{Leb}
\DeclareMathOperator{\Span}{span}
\DeclareMathOperator*{\argmin}{arg\,min}
\DeclareMathOperator{\range}{range}
\DeclareMathOperator{\supp}{supp}
\DeclareMathOperator{\Int}{int}
\DeclareMathOperator{\sign}{sign}
\newcommand{\dd}{\,\mathrm{d}}
\newcommand{\ddd}{\mathrm{d}}
\newcommand{\e}{e}
\def\imod#1{\allowbreak\mkern10mu({\operator@font mod}\,\,#1)}
\newcommand{\ssdiff}{\mathbin{\vphantom{\triangle}\mathpalette\d@tD@lta\relax}}
\newcommand{\d@tD@lta}[2]{%
  \ooalign{\hidewidth$\m@th#1\cdot$\hidewidth\cr$\m@th#1\triangle$\cr}%
}
\newcommand{\pagelabel}[1]{\phantomsection\label{#1}}
\begin{document}
    \typeout{Started document}
    %% pages in the ``frontmatter'' section have roman numeral page number
    \maketitle
    \typeout{Finished title page}
    %the forms from GRIS printed as pdf
    %\includepdf[pages=1]{1-pre/GRIS.pdf}
    %\includepdf[pages=1]{1-pre/GRIS2.pdf}
    % \afterpage{\blankpage}

\begin{abstract}
    
This paper investigates links between the eigenvalues and eigenfunctions of the Laplace-Beltrami operator, and the higher Cheeger constants of smooth Riemannian manifolds, possibly weighted and/or with boundary. The higher Cheeger constants give a loose description of the major geometric features of a manifold. We give a constructive upper bound on the higher Cheeger constants, in terms of the eigenvalue of any eigenfunction with the corresponding number of nodal domains. Specifically, we show that for each such eigenfunction, a positive-measure collection of its superlevel sets have their Cheeger ratios bounded above in terms of the corresponding eigenvalue. 

Some manifolds have their major features entwined across several eigenfunctions, and no single eigenfunction contains all the major features. In this case, there may exist carefully chosen linear combinations of the eigenfunctions, each with large values on a single feature, and small values elsewhere. We can then apply a soft-thresholding operator to these linear combinations to obtain new functions, each supported on a single feature. We show that the Cheeger ratios of the level sets of these functions also give an upper bound on the Laplace-Beltrami eigenvalues. We extend these level set results to nonautonomous dynamical systems, and show that the dynamic Laplacian eigenfunctions reveal sets with small dynamic Cheeger ratios.

\end{abstract}

    \typeout{#############################^^J#############################^^J#############################^^J#############################^^J}
    \typeout{^^JFinished preliminary forms^^J^^J}
    \typeout{#############################^^J#############################^^J#############################^^J#############################^^J}
    
    %% pages in the ``mainmatter'' section have arabic page numbers and chapters are numbered
%    \mainmatter
%    \pagestyle{fancy}
%        \fancyhf{}
 %       \fancyhead[LE]{\leftmark}
%        \fancyhead[RO]{\rightmark}
%        \fancyfoot[C]{\thepage}
%        \renewcommand{\headrulewidth}{1pt}
%        \setcounter{secnumdepth}{3}
        
        \typeout{Modified page settings}

%%%%%%%%%%%%%%%%%%%%%%%%%%%%%%%%%%%%%%%%%%%%%%%%%%%%%%%%%%%%%%%%%%
%%%%%%%%%%%%%%%%%%%%%%%%%%%%%%%%%%%%%%%%%%%%%%%%%%%%%%%%%%%%%%%%%%
%%%%%%%%%%%%%%%%%%%%%%%%%%%%%%%%%%%%%%%%%%%%%%%%%%%%%%%%%%%%%%%%%%
%%%%%%%%%%%%%%%%%%%%%%%%%%%%%%%%%%%%%%%%%%%%%%%%%%%%%%%%%%%%%%%%%%
%%%%%%%%%%%%%%%%%%%%%%%%%%%%%%%%%%%%%%%%%%%%%%%%%%%%%%%%%%%%%%%%%%
%%%%%%%%%%%%%%%%%%%%%%%%%%%%%%%%%%%%%%%%%%%%%%%%%%%%%%%%%%%%%%%%%%

%%%%%%%%%%%%%%%%%%%%%%%%%%%
\section{Introduction}
%%%%%%%%%%%%%%%%%%%%%%%%%%%
%The \emph{Cheeger problem} is an optimisation problem in Riemannian geometry, which has been studied extensively in relation to the eigenvalues of the Laplace-Beltrami operator \cite{cheeger,buser,ledoux,milman}. 
The classical static \emph{Cheeger problem} is an optimisation problem in Riemannian geometry, which has been studied extensively in relation to the eigenvalues of the Laplace-Beltrami operator \cite{cheeger,buser,ledoux,milman}. 
Given an $n$-dimensional Riemannian manifold $(M,g)$ with volume measure $V$ and induced $n-1$-dimensional Hausdorff measure $V_{n-1}$, the \emph{Neumann Cheeger ratio} of a set $A \subset M$ with suitably smooth boundary is the ratio $\mathcal{J}_N(A):=\frac{V_{n-1}(\partial A \cap \Int M)}{V(A)}$. 
The Neumann Cheeger problem consists of finding a set that minimises $\mathcal{J}_N(A)$ over sets $A \subset M$ satisfying $V(A) \le \frac{V(M)}{2}$. The resulting minimal ratio is known as the \emph{Neumann Cheeger constant for $M$}.
For compact $n$-dimensional submanifolds $M \subset \mathbb{R}^n$, a Neumann Cheeger ratio minimiser is a set $A \subset M$ which is separated from $M\backslash \overline{A}$ by an optimal `bottleneck'. We give an example in Figure \ref{fig:bowtie-neumann}. 
% Dirichlet k=1 case
The \emph{Dirichlet Cheeger ratio} of a set $A \subset M$ with suitably smooth boundary is the ratio $\mathcal{J}_D(A):=\frac{V_{n-1}(\partial A)}{V(A)}$, and the Dirichlet Cheeger problem consists of finding a set that minimises $\mathcal{J}_D(A)$ over subsets $A \subset M$. The resulting minimal ratio is known as the \emph{Dirichlet Cheeger constant for $M$}.
A Dirichlet Cheeger ratio minimiser is a region with an optimal balance between large volume and small boundary.  
For $n$-dimensional $M \subset \mathbb{R}^n$ endowed with the Euclidean metric and $A \subset M$, $\mathcal{J}_D(A)$ decreases by a factor of $s$ when we dilate $A$ by a factor of $s$ in each dimension, so minimisers for $\mathcal{J}_D(A)$ always contact $\partial M$ ({\cite[Theorem 3.5]{pariniintro}}). We give an example in Figure \ref{fig:bowtie-dirichlet}. 

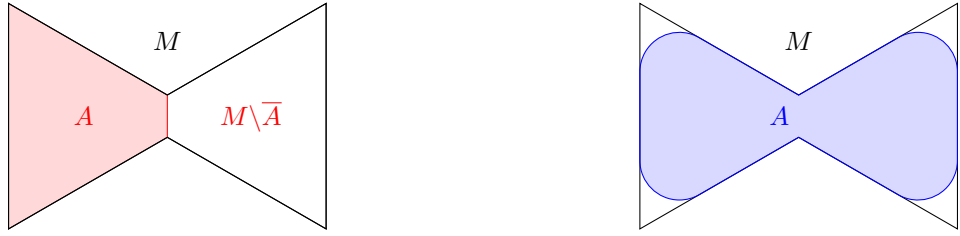
\begin{figure}
\centering
    \begin{subfigure}[t]{0.48\linewidth}
    \centering
    \begin{tikzpicture}
    \pgfmathsetmacro\sidelen{3-3*0.281284433486614*2/3}
    \pgfmathsetmacro\centerlen{\sidelen*sqrt(3)/2}
    \pgfmathsetmacro\complemlen{2*\centerlen-1}
    % Shape outline
    \draw (0,0) -- ++ (30:\sidelen) coordinate (gapi) -- ++ (330:\sidelen) coordinate (endi) -- ++ (90:3) coordinate (endii)
    -- ++ (210:\sidelen) coordinate (gapii) -- ++ (150:\sidelen) -- cycle (\centerlen,2.5) node {$M$};
    % Cheeger shape   
    \fill[red,fill opacity=0.15] (0,0) -- (gapi) -- (gapii) -- (0,3) -- cycle;
    \draw[red] (gapi) -- (gapii) (1,1.5) node {$A$} (\complemlen,1.5) node {$M \backslash \overline{A}$};
    \draw (0,0) -- (gapi) -- (endi) -- (endii) -- (gapii) -- (0,3) -- cycle;
    \end{tikzpicture}
    \caption{The Neumann Cheeger ratio for $A \subset M$ is minimised when $A$ and $M \backslash \overline{A}$ are separated by the narrowest part of the bow-tie figure. }
    \label{fig:bowtie-neumann}
    \end{subfigure}
    \nolinebreak\quad
    \begin{subfigure}[t]{0.48\textwidth}
    % Dirichlet shape
    \centering
    \begin{tikzpicture}
    \pgfmathsetmacro\sidelen{3-3*0.281284433486614*2/3}
    \pgfmathsetmacro\cornerlen{3*0.3034045210084015}
    \pgfmathsetmacro\arclen{\cornerlen/sqrt(3)}
    \pgfmathsetmacro\shortsidelen{\sidelen-\cornerlen}
    \pgfmathsetmacro\shortendlen{3-2*\cornerlen}
    \pgfmathsetmacro\centerlen{\sidelen*sqrt(3)/2}
    % \pgfmathsetmacro\cornerlenn{3*0.281284433486614}
    % \pgfmathsetmacro\arclenn{\cornerlenn/sqrt(3)}
    % \pgfmathsetmacro\shortsidelenn{\sidelen-\cornerlenn}
    % \pgfmathsetmacro\shortendlenn{3-2*\cornerlenn}
    % \pgfmathsetmacro\centerlenn{\sidelen*sqrt(3)/2}
    % Shape outline
    \draw (0,0) -- ++ (30:\sidelen) coordinate (endi) -- ++ (330:\sidelen) -- ++ (90:3) 
    -- ++ (210:\sidelen) coordinate (endii) -- ++ (150:\sidelen) -- cycle (\centerlen,2.5) node {$M$};
    % Cheeger shape
    \filldraw[blue,fill opacity=0.15] (0,\cornerlen) arc (180:300:\arclen) -- ++ (30:\shortsidelen) -- ++ (330:\shortsidelen) arc (240:360:\arclen) -- ++ (0,\shortendlen) 
    arc (0:120:\arclen) -- ++ (210:\shortsidelen) -- ++ (150:\shortsidelen) arc (60:180:\arclen) -- cycle;
    % Check shape: \draw (0,\cornerlenn) arc (180:300:\arclenn) -- ++ (30:\shortendlenn) arc (300:420:\arclenn) -- ++ (150:\shortendlenn) arc (60:180:\arclenn) --  cycle;
    \node[left,blue] at (\centerlen,1.5) {$A$};
   \end{tikzpicture}
   \caption{The Dirichlet Cheeger ratio for $A \subset M$ is minimised when $A$ contains most of the open space of $M$, but only extends partway into the corners of $M$ \cite[Example 4.2]{leonardi-pratelli}. }
   \label{fig:bowtie-dirichlet}
    \end{subfigure}

\caption{Neumann and Dirichlet Cheeger minimisers for $M \subset \mathbb{R}^2$ equipped with the Euclidean metric.  
}
\label{fig:bowtie}
\end{figure}

The Cheeger problem can be extended to seek \emph{collections} of subsets, each of which have small Cheeger ratios. Given a collection of $k$ disjoint sets $A_1,\ldots,A_k \subset M$, the Neumann and Dirichlet Cheeger ratios of $\{A_1,\ldots,A_k\}$ are given by $\mathcal{J}_N(\{A_1,\ldots,A_k\}):=\max_{1 \le i \le k} \mathcal{J}_N(A_i)$ and $\mathcal{J}_D(\{A_1,\ldots,A_k\}):=\max_{1 \le i \le k} \mathcal{J}_D(A_i)$, respectively, i.e.\ the Cheeger ratio of a collection of disjoint subsets of $M$ is the maximum Cheeger ratio among the subsets. 
For each $k \ge 1$, the \emph{$k$th Neumann} or \emph{Dirichlet Cheeger problem} consists of finding a collection of $k$ disjoint sets $\{A_1,\ldots,A_k\}$ which minimises $\mathcal{J}_N(\{A_1,\ldots,A_k\})$ or $\mathcal{J}_D(\{A_1,\ldots,A_k\})$. 
The first Dirichlet Cheeger problem is exactly the classical Dirichlet Cheeger problem, while the second Neumann Cheeger problem corresponds to the classical Neumann Cheeger problem. 
The $k$th Cheeger problems for larger $k$ are called the \emph{higher Cheeger problems}, and the infima are called the \emph{higher Cheeger constants}.

Exact minimisers for the Cheeger problem have only been computed for a few sets or classes of sets (see e.g.\ \cite{leonardi-neumayer-saracco,bobkov-parini-21,leonardi15,caroccia-ciani}). In particular, \cite[Theorem 1.4]{leonardi-neumayer-saracco} obtains an expression for the Cheeger-minimising set of any subset of $\mathbb{R}^2$ without a `neck'. 
We are instead interested in using the Cheeger problem to identify necks, and the approach of \cite{leonardi-neumayer-saracco} does not extend to sets with necks (see e.g.\ \cite[Figs 1--2]{leonardi-neumayer-saracco}). 
There are some algorithms for solving Cheeger problems numerically (see e.g.\ \cite{carlier-comte-peyre,caselles-facciolo-meinhardt,buttazzo-carlier-comte,caboussat-glowinski-pons,lachand-robert-oudet}), but these algorithms apply only to the classical Cheeger problems, not the versions with $k \ge 2$ (in the Dirichlet case) or $k \ge 3$ (in the Neumann case). 
These algorithms have not been studied on Riemannian manifolds other than full-dimensional subsets of $\mathbb{R}^n$. 
Understanding the connectivity of more general Riemannian manifolds is important in settings such as manifold learning (e.g.\ \cite{coifman-lafon,izenman}), where one studies the geometry of a low-dimensional submanifold embedded in some high-dimensional Euclidean space. 
% The analogous partitioning problem on graphs is NP-hard (see e.g.\ \cite[Theorem 13.2]{kaibel}). 
%In general, a set with minimal boundary volume exists, although the boundary may not be smooth. 
The second Dirichlet Cheeger problem is studied in \cite{bobkov-parini}, where the authors solve this problem for one specific subset of $\mathbb{R}^2$ (an annulus). 

Approximate minima and minimisers for the higher Cheeger problem, and upper bounds on the higher Cheeger constants, can be found using the eigenfunctions and eigenvalues of the (possibly \emph{weighted}) \emph{Laplace-Beltrami operator}. Miclo \cite{miclo15} and others have given upper bounds on the $k$th Cheeger constant on boundaryless manifolds, up to a non-explicit factor depending cubically on $k$. Miclo improves this dependence on $k$ to sub-logarithmic, by using (for example) the $2k$th eigenvalue to bound the $k$th Cheeger constant. We prove an alternative upper bound on the $k$th Cheeger constant (Theorem \ref{thm:cheeger}), extending a result from the graph setting \cite[Theorem 5]{daneshgar-hajiabolhassan-javadi}, in terms of the eigenvalue of any eigenfunction with $k$ or more nodal domains, up to a small constant factor independent of $k$. Thus, we can obtain a much tighter upper bound on the $k$th Cheeger constant whenever the appropriate eigenfunction has sufficiently many nodal domains. Our bound also applies to manifolds with nonempty boundary, under Neumann or Dirichlet boundary conditions. Moreover, our bound is constructive - we show that any (possibly weighted) Laplace-Beltrami eigenfunction has superlevel sets within each nodal domain whose Cheeger ratios are also bounded above. 
A similar approach is used in the graph setting in e.g.\ \cite[sec 1.1]{kllot}, to obtain a 2-partition of a graph with a low conductance from the first nontrivial graph Laplacian eigenvalue. Our approach is primarily useful in situations where Laplacian eigenfunctions on a manifold are calculated or approximated explicitly. % Our approach gives a new justification for using \kk-means clustering in diffusion space, different from that given in \cite{nadler-lafon-coifman-kevrekidis,lafon-lee}. 

An important question in the study of nonautonomous dynamical systems is how to divide the phase space into regions which interact minimally with each other. 
In purely deterministic dynamics, any two disjoint regions have no interaction with each other, so we instead consider regions whose boundaries remain small, relative to their size, as they evolve with the deterministic dynamics. The ratio of a region's time-averaged boundary size to its overall size is called its \emph{dynamic Cheeger ratio}. Sets with small dynamic Cheeger ratio are called \emph{coherent sets}, and the infimal dynamic Cheeger ratio is called the \emph{dynamic Cheeger constant} \cite{F15,FKw17}. We can obtain an upper bound on the dynamic Cheeger constants using the eigenvalues of an operator, which acts on the domain of the dynamical system, called the \emph{dynamic Laplacian}. 
We show that $k$ disjoint coherent sets with quality guarantees -- upper bounds on their dynamic Cheeger ratios -- can be obtained from any eigenfunction with $k$ nodal domains (Theorem \ref{thm:dcheeger}). 
%Our technique in the next chapter can also be used to extract coherent sets from these eigenfunctions, with an upper bound on their dynamic Cheeger ratios, in terms of how effective the separation is (Proposition \ref{thm:dseba}). 

The remainder of this article is structured as follows. In section \ref{sec:cheeger-preliminaries}, we provide some basic definitions and define the higher Cheeger constants. In subsections \ref{sec:cheegerintro}--\ref{sec:hcheeg}, we summarise prior upper bounds on the Cheeger constants in terms of Laplace-Beltrami eigenvalues. We also state our own constructive upper bounds, which depend on properties of the eigenfunctions (Theorem \ref{thm:cheeger} and Proposition \ref{thm:seba}). In subsection \ref{sec:dcheeg}, we generalise these results to the dynamic setting. Lastly, in section \ref{sec:cheeger-examples}, we give some examples comparing our bounds to bounds from the literature.

%%%%%%%%%%%%%%%%%%%%%%%%%%%%%%%%%%%%%%%%%%%%%%%%%%%%%%%%%%%%%%%%%%
%%%%%%%%%%%%%%%%%%%%%%%%%%%%%%%%%%%%%%%%%%%%%%%%%%%%%%%%%%%%%%%%%%
%%%%%%%%%%%%%%%%%%%%%%%%%%%%%%%%%%%%%%%%%%%%%%%%%%%%%%%%%%%%%%%%%%
%%%%%%%%%%%%%%%%%%%%%%%%%%%%%%%%%%%%%%%%%%%%%%%%%%%%%%%%%%%%%%%%%%
%%%%%%%%%%%%%%%%%%%%%%%%%%%%%%%%%%%%%%%%%%%%%%%%%%%%%%%%%%%%%%%%%%
%%%%%%%%%%%%%%%%%%%%%%%%%%%%%%%%%%%%%%%%%%%%%%%%%%%%%%%%%%%%%%%%%%

%%%%%%%%%%%
\section{Preliminaries} \label{sec:cheeger-preliminaries} %%% Section 3. 
%%%%%%%%%%%

\subsection{Higher Cheeger constants} \label{sec:cheegerdef}
% Weighted manifolds and measures
Let $(M,g)$ be a smooth Riemannian manifold, possibly with nonempty boundary, i.e.\ a second-countable Hausdorff space where each point of $M$ has a neighbourhood diffeomorphic to a relatively open subset of $\{x \in \mathbb{R}^n:x_n\ge 0\}$. Except where otherwise noted, we assume all Riemannian manifolds are $n$-dimensional ($n \ge 2$), $C^\infty$, compact and connected, and have smooth boundary if they have a nonempty boundary. Let $V$ and $\ddd V$ denote the volume measure and volume form on $M$ induced by $g$. 
Let $(M,g,\mu)$ be a \emph{weighted manifold}, i.e.\ a Riemannian manifold $(M,g)$ equipped with a measure $\mu$ satisfying $\ddd\mu=\e^\phi \dd V$ for some $\phi \in C^\infty(M)$. 
Note that we can treat any Riemannian manifold as a weighted manifold by taking $\mu=V$, so all our results for weighted manifolds extend directly to unweighted manifolds (i.e.\ manifolds where $\phi=0$ everywhere). 
On each $n-1$-dimensional submanifold $\Sigma \subset M$, let $V_{n-1}$ and $\ddd V_{n-1}$ denote the $n-1$-dimensional Riemannian volume measure and volume form on $\Sigma$, and let $\mu_{n-1}$ be the measure satisfying $\ddd\mu_{n-1}:=\e^\phi \dd V_{n-1}$. 
 
% When we consider different metrics on $M$, we will use subscripts (e.g.\ $\mu_{n-1;g}$) to indicate the relevant metric. 

%Cheeger constants
%We define a \emph{manifold with corner} $M$ the same way as a manifold with boundary, except the neighbourhoods may be diffeomorphic to relatively open subsets of $\{x \in \mathbb{R}^n:x_1\ge 0,x_2\ge 0\}$, and we let $\Lambda(M)$ denote the \emph{corner} of $M$, i.e.\ the points corresponding to $\{x_1=x_2=0\}$ (as in \cite[Definition 1.8.8]{mukherjee}).  Buser \cite{buser80a} defines the Neumann Cheeger constant in terms of a codimension-1 submanifold $C \subset M$ which separates $M$ into two open submanifolds $A_1,A_2$. It can be checked that the sets $A_i$ vary over the full-dimensional relatively open subsets of $M$ such that $\overline{A_i}$ is a submanifold with corner of $M$ with $\Lambda(\overline{A}_i) \subset \partial M$, except those where $C$ is tangent to $\partial M$ (which cannot be infimal by a similar argument to \cite{} \flag{the Neumann paper}).  

% Cheeger constants
For a set $A \subset M$, we let $\partial^M A$ denote the relative topological boundary of $A$ in $M$, i.e.\ the set of points $p \in M$ such that every neighbourhood of $p$ contains both points in $A$ and points in $M \backslash A$. 
For example, if $M:=\{(x,y) \in \mathbb{R}^2:x^2+y^2\le 1\}$ and $A:=\{(x,y) \in M : y>0\}$, then $\partial^M A$ consists of the interval $\{(x,0):-1 \le x\le 1\}$ but not the semicircle $\{(x,y) \in M:x^2+y^2=1\}$. 
We define the Neumann and Dirichlet Cheeger constants as follows.

\begin{definition} \label{def:packing}
Let $\mathscr{P}_N(M)$ denote the collection of nonempty, relatively open subsets $A \subset M$ such that $\partial^M A$ is a codimension-1, $C^\infty$ submanifold of $M$ with boundary $\partial (\partial^M A) = \partial^M A \cap \partial M$. Let $\mathscr{P}_D(M)$ denote the collection of nonempty, relatively open subsets $A \subset M$ such that  $\overline{A} \cap \partial M = \emptyset$, and $\partial A$ is a codimension-1, $C^\infty$ submanifold of $M$. Then for $k \ge 1$, a \emph{Neumann}, resp.\ \emph{Dirichlet} \emph{$k$-packing} is a set $\mathcal{A}_k := \{A_1,\ldots,A_k\}$ such that each $A_i \in \mathscr{P}_N(M)$, resp.\ $A_i \in \mathscr{P}_D(M)$, and the $A_i$ are pairwise disjoint. Let $\mathscr{P}_{k,N}(M)$, resp.\ $\mathscr{P}_{k,D}(M)$ denote the set of Neumann, resp.\ Dirichlet $k$-packings for $M$. 
\end{definition}

\begin{definition}[Higher Cheeger constants] \label{def:whcheeg} 
For $k \ge 1$, the \emph{Neumann Cheeger ratio} of a Neumann $k$-packing $\{A_1,\ldots,A_k\} \in \mathscr{P}_{k,N}(M)$ is 
\begin{align}
    \mathcal{J}_N(\{A_1,\ldots,A_k\})&:=\max_{1\le i \le k} \frac{\mu_{n-1}(\partial^M A_i)}{\mu(A_i)}. \label{eq:wcheegerratio} 
\end{align}
The \emph{Dirichlet Cheeger ratio} of a Dirichlet $k$-packing $\{A_1,\ldots,A_k\} \in \mathscr{P}_{k,D}(M)$ is 
\begin{align}
    \mathcal{J}_D(\{A_1,\ldots,A_k\})&:=\max_{1\le i \le k} \frac{\mu_{n-1}(\partial A_i)}{\mu(A_i)}. \label{eq:wcheegerratiod}
\end{align}
The \emph{$k$th Neumann} and \emph{Dirichlet Cheeger constants} of $M$ are 
\begin{align}
    h_{k,N}&:=\inf_{\{A_1,\ldots,A_k\} \in \mathscr{P}_{k,N}(M)} \mathcal{J}_N(\{A_1,\ldots,A_k\}) \label{eq:defcheegnk} \\
    h_{k,D}&:=\inf_{\{A_1,\ldots,A_k\} \in \mathscr{P}_{k,D}(M)} \mathcal{J}_D(\{A_1,\ldots,A_k\}). \label{eq:defcheegdk}
\end{align}
\pagelabel{page:wcheegb} 
\end{definition}

We will sometimes write $\mathcal{J}_N(A)$ and $\mathcal{J}_D(A)$ instead of $\mathcal{J}_N(\{A\})$ and $\mathcal{J}_D(\{A\})$ for convenience. By this definition, we always have $h_{1,N}=0$, aligning with our notation where $\lambda_{1,N}=0$. In the special case $\partial M = \emptyset$, we write $\mathcal{J}_\emptyset$ and $h_{k,\emptyset}$, respectively for $\mathcal{J}_N$ and $h_{k,N}$, respectively, and refer to $\mathcal{J}_\emptyset$ and $h_{k,\emptyset}$ as the \emph{boundaryless Cheeger ratio} and \emph{constant}. 

% Source for our definitions
Our Dirichlet Cheeger constants generalises Cheeger's original constant for manifolds with boundary \cite{cheeger}, while our Neumann Cheeger constants generalise the  boundaryless Cheeger constant of \cite{cheeger}, the Neumann Cheeger constant of \cite{buser80a}, and the $k$th boundaryless Cheeger constants \cite{miclo15} for $k \ge 1$. Our $h_{k,\emptyset}$ is exactly that defined in \cite[p.326]{miclo15}: Miclo requires that the $A_i$ are connected and that each connected component of $M \backslash A_i$ contains some $A_j$ for $j \ne i$, but Miclo notes that this does not change the value of $h_{k,\emptyset}$. 

% Cheeger's and Yau's constants
Cheeger \cite{cheeger} and Buser \cite{buser80a} (see also \cite[p.499]{yau}) consider $h_{k,\emptyset}$ and $h_{k,N}$ for $k=2$ only, and they require that $\{A_1,A_2\}$ are a 2-partition for $M$ (up to sets of measure zero) with $\partial^M A_1 = \partial^M A_2$, instead of allowing 2-packings of $M$. This does not affect the value of $h_{2,N}$. To see this, choose any $\{A_1,A_2\} \in \mathscr{P}_{2,N}(M)$ with $\mu_{n-1}(\partial^M A_1) \le \mu_{n-1}(\partial^M A_2)$, and define the 2-packing $\{\tilde{A}_1,\tilde{A}_2\}$ by $\tilde{A}_1:=\overline{A_1}\backslash \partial^M \overline{A_1}$, $\tilde{A}_2:=M \backslash \overline{A_1}$. Then $\partial^M \tilde{A}_1=\partial^M \tilde{A}_2$, and $\{\tilde{A}_1,\tilde{A}_2\}$ is a 2-partition for $M$. The fact $\partial^M \tilde{A}_1 \subset \partial^M A_1$ implies $\{\tilde{A}_1,\tilde{A}_2\}\in \mathscr{P}_{2,N}(M)$, and since $\mu_{n-1}(\partial^M \tilde{A}_2) \le \mu_{n-1}(\partial^M A_1) \le \mu_{n-1}(\partial^M A_2)$ and $\mu(\tilde{A}_2)\ge \mu(A_2)$, we have $\mathcal{J}_N(\{\tilde{A}_1,\tilde{A}_2\}) \le \mathcal{J}_N(\{A_1,A_2\})$. 

Our Cheeger constants are defined slightly differently from those in \cite{bobkov-parini,de-ponti-mondino}, who take the infimum over arbitrary packings of $M$ and use \emph{perimeter} instead of Hausdorff measure. Bobkov and Parini's Cheeger constant is equal to $h_{k,D}$ for unweighted full-dimensional submanifolds of $\mathbb{R}^n$ (\cite[Proposition 3.6]{bobkov-parini} and e.g.\ \cite[Proposition 3.62]{ambrosio-fusco-pallara}), while $h_{2,N}$ gives an upper bound on de Ponti and Mondino's Cheeger constant on unweighted Riemannian manifolds by \cite[Proposition 2.37]{volkmann} (\cite{volkmann} defines perimeter differently to \cite{de-ponti-mondino}, but they are equal on unweighted manifolds by e.g.\ \cite[remark on Definition 2.33 and Theorems 2.38--2.39]{volkmann}). Yau \cite[p.499]{yau} also defines a variant of $h_{2,N}$ which does not require each $\partial A_i$ to be smooth. 
The paper \cite{saracco-stefani}, published after the first version of this manuscript was submitted, extends much of the work in \cite{bobkov-parini} to minimisers of more general functionals than the maximal Cheeger ratio, and to the more general settings described in \cite{francheschi-pinamonti-saracco-stefani}.

%%%%%%%%%%%%%%%
\subsection{Eigenvalues of the weighted Laplace-Beltrami operator}
%%%%%%%%%%%%%%%

% Sobolev spaces
Let $W^{1,2}(M;\mu)$ denote the Sobolev space of $L^2(M;\mu)$ functions $f$ with $L^2(M;\mu)$-integrable weak derivatives $\nabla f$, and let $W^{1,2}_0(M;\mu)$ denote the completion in the Sobolev norm ${\|\cdot\|}_{W^{1,2}(M;\mu)}^2:=\|\cdot\|_{L^2(M;\mu)}^2+{\||\nabla \cdot|\|_{L^2(M;\mu)}^2}$ of the set of $C^\infty(M)$ functions with compact support in $\Int M$ (see e.g.\ \cite[pp.14-15]{chavel84}). 

% Weighted Laplace-Beltrami operators
For any $C^1$ vector field $X$ on $M$, let $\Div X$ denote the \emph{divergence} of $X$ with respect to $\ddd V$ (defined in e.g.\ \cite[p.96]{grigor'yan05} or \cite[Prop.\ III.7.1 and proof]{chavel06}). Writing the Radon-Nikodym derivative of $\mu$ as $\ddd\mu=\e^\phi \dd V$, let $\Div_\mu X$ denote the \emph{weighted divergence} $\Div_\mu X:=\e^{-\phi}\Div(\e^\phi X)$ (see e.g.\ \cite[p.96]{grigor'yan05}). 
Then the \emph{weighted Laplace-Beltrami operator} $\Delta_\mu$ is defined for $f \in C^2(M)$ by 
\begin{align}
    \Delta_\mu f:=\Div_\mu \nabla f = \e^{-\phi} \Div(\e^\phi \nabla f). \label{eq:deflaplacew}
\end{align}
%This definition implies $\Delta_\mu$ is equal to the Laplace-Beltrami operator on the conformally-related manifold $\bigl(M,\e^{\frac{2\phi}{n}} g\bigr)$. 

% Neumann eigenproblem
We consider the \emph{Neumann} and \emph{Dirichlet eigenproblems} for $\Delta_\mu$. The \emph{Neumann eigenproblem} is as follows: find $u \in C^\infty(M)$ and $\lambda \in \mathbb{R}$, such that  
\begin{align}
    \Delta_\mu u = \lambda u, \label{eq:laplacian}
\end{align}
subject to the \emph{Neumann boundary condition} (if $\partial M \not= \emptyset$) 
\begin{align}
    \frac{\partial u}{\partial \mathbf{n}} = 0 \quad \text{on } \partial M, \label{eq:neumann}
\end{align}
where $\mathbf{n}$ denotes the outward unit normal to $\partial M$. 
Solutions $u$ and $\lambda$ are called \emph{eigenfunctions} and \emph{eigenvalues} of $\Delta_\mu$. There is an orthogonal Schauder basis for $L^2(M;\mu)$ consisting of eigenfunctions of \eqref{eq:laplacian} satisfying \eqref{eq:neumann} (see e.g.\ \cite[Theorem 4.3.1]{lablee} or \cite[ch.\ III, Theorem 18]{berard86}). The corresponding eigenvalues form a non-positive decreasing sequence accumulating only at $-\infty$ (see e.g.\ \cite[Theorem 4.3.1]{lablee} or \cite[Theorems 11.5.1-11.5.2]{jost}). We denote the eigenvalues as $0=\lambda_{1,N} > \lambda_{2,N} \ge \lambda_{3,N} \ge \ldots$, or as $0=\lambda_{1,\emptyset} >  \lambda_{2,\emptyset}\ge \lambda_{3,\emptyset} \ge \ldots$ in the special case $\partial M=\emptyset$. The eigenvalue ordering induces an ordering on the corresponding eigenfunctions, so we will occasionally write the basis of eigenfunctions as $u_1,u_2,\ldots$. 

% Dirichlet eigenproblem
The \emph{Dirichlet eigenproblem} consists of finding $u\in C^\infty(M)$ and $\lambda\in \mathbb{R}$ which solves \eqref{eq:laplacian}, subject to the \emph{Dirichlet boundary condition},
\begin{align}
    u = 0 \quad \text{on } \partial M. \label{eq:dirichlet}
\end{align}
We assume $\partial M \not= \emptyset$ when we consider Dirichlet boundary conditions. There is also an orthogonal Schauder basis for $L^2(M;\mu)$ of eigenfunctions of \eqref{eq:laplacian} satisfying \eqref{eq:dirichlet}. In this case, the eigenvalues form a strictly negative decreasing sequence accumulating only at $-\infty$, and we denote them $0 > \lambda_{1,D} > \lambda_{2,D} \ge \lambda_{3,D} \ge \ldots$. 

% Variational characterisation
The eigenvalues of $\Delta_\mu$ have the following variational characterisation (the proof of \cite[p.16]{chavel84} extends directly to the weighted case). 
\begin{theorem} \label{thm:variational}
Let $(M,g,\mu)$ be a weighted manifold, and let $u_1,u_2,\ldots$ denote a complete orthogonal basis of Neumann (resp.\ Dirichlet) eigenfunctions of $\Delta_{{\mu}}$ corresponding to $\lambda_{1,N},\lambda_{2,N},\ldots$ (resp.\ $\lambda_{1,D},\lambda_{2,D},\ldots$). Then for each $k \ge 1$, we have 
\begin{align}
    \lambda_{k,N} = -\inf_{\substack{f \in W^{1,2}(M) \\ \int_M u_i f \dd\mu=0, \forall i \in \{1, \ldots, k-1\}}} \frac{\||\nabla f|\|_{L^2(M;\mu)}^2}{\|f\|_{L^2(M;\mu)}^2}, \label{thm:variationaln}
\end{align}
resp.\  
\begin{align} 
    \lambda_{k,D} = -\inf_{\substack{f \in W^{1,2}_0(M) \\ \int_M u_i f\dd\mu=0, \forall i \in \{1, \ldots, k-1\}}} \frac{\||\nabla f|\|_{L^2(M;\mu)}^2}{\|f\|_{L^2(M;\mu)}^2},  \label{thm:variationald}
\end{align}
with equality if and only if $f$ is a Neumann (resp.\ Dirichlet) eigenfunction of $\Delta_\mu$ with eigenvalue $\lambda_{k,N}$ (resp.\ $\lambda_{k,D}$). 
\end{theorem}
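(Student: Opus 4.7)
The plan is to follow the classical Rayleigh-Ritz argument, transcribing the unweighted proof in \cite[p.16]{chavel84} to the weighted setting by using the weighted Green's identity in place of the ordinary one. I would first record that identity: for $f,g \in C^2(M)$ with $g$ satisfying the relevant boundary condition,
\begin{align*}
\int_M f \Delta_\mu g \dd\mu = -\int_M \langle \nabla f, \nabla g\rangle \dd\mu + \int_{\partial M} f\, \tfrac{\partial g}{\partial \mathbf{n}} \dd\mu_{n-1}.
\end{align*}
This follows from the ordinary divergence theorem applied to the vector field $\e^\phi f \nabla g$, together with the definition $\Delta_\mu g = \e^{-\phi}\Div(\e^\phi \nabla g)$ and the fact that $\ddd\mu=\e^\phi\dd V$, $\ddd\mu_{n-1}=\e^\phi \dd V_{n-1}$, so the weight cancels out of the boundary term. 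In the Neumann case the boundary integral vanishes because $\partial g/\partial\mathbf{n}=0$; in the Dirichlet case it vanishes because $f=0$ on $\partial M$ whenever $f \in W^{1,2}_0(M;\mu)$.

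Next, I would expand the test function in the eigenbasis. Let $f \in W^{1,2}(M;\mu)$ (resp.\ $W^{1,2}_0(M;\mu)$) be orthogonal in $L^2(M;\mu)$ to $u_1,\ldots,u_{k-1}$, and write $f = \sum_{i \ge 1} c_i u_i$, where we may take $\|u_i\|_{L^2(M;\mu)}=1$ without loss of generality. The orthogonality hypothesis forces $c_i=0$ for $i<k$, so that $\|f\|_{L^2(M;\mu)}^2 = \sum_{i\ge k} c_i^2$. Applying the weighted Green's identity with $g=u_j$ gives $\int_M \langle \nabla f,\nabla u_j\rangle\dd\mu = -\lambda_{j,*} c_j$, where $*$ is $N$ or $D$. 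Expanding $|\nabla f|^2$ in the same basis (after approximating $f$ by smooth functions compatible with the boundary condition, which is dense in the appropriate Sobolev space by standard extension/truncation arguments), one obtains
\begin{align*}
\||\nabla f|\|_{L^2(M;\mu)}^2 = \sum_{i \ge k} (-\lambda_{i,*}) c_i^2.
\end{align*}

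Since $-\lambda_{i,*} \ge -\lambda_{k,*} \ge 0$ for all $i \ge k$, dividing the two displayed sums yields
\begin{align*}
\frac{\||\nabla f|\|_{L^2(M;\mu)}^2}{\|f\|_{L^2(M;\mu)}^2} \ge -\lambda_{k,*},
\end{align*}
which is exactly the claimed inequality after rearrangement. Equality holds if and only if $c_i = 0$ whenever $-\lambda_{i,*} > -\lambda_{k,*}$, i.e.\ $f$ lies in the eigenspace of $\lambda_{k,*}$; standard elliptic regularity for $\Delta_\mu$ promotes such $f$ to a $C^\infty$ eigenfunction, recovering the ``if and only if'' clause.

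I expect the main obstacle to be the density/approximation step used to justify the spectral computation of $\||\nabla f|\|_{L^2(M;\mu)}^2$ for merely Sobolev $f$, particularly in the Neumann case where one needs to approximate by smooth functions whose normal derivatives vanish on $\partial M$. The cleanest way around this is to avoid explicit approximation and instead define the Dirichlet energy $\mathcal{E}(f):=\||\nabla f|\|_{L^2(M;\mu)}^2$ as a closed quadratic form on $W^{1,2}(M;\mu)$ (resp.\ $W^{1,2}_0(M;\mu)$), whose associated self-adjoint operator is the Friedrichs/Neumann extension of $-\Delta_\mu$. The spectral theorem then gives $\mathcal{E}(f) = \sum (-\lambda_{i,*}) c_i^2$ directly, and the remainder of the argument is purely algebraic. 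All other steps reduce to the same manipulations as in the unweighted case because the weight $\e^\phi$ is smooth and strictly positive, so it plays no essential role beyond producing the measures $\mu$ and $\mu_{n-1}$.
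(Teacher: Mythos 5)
Your proposal is correct and takes essentially the same route as the paper, which simply cites Chavel \cite[p.16]{chavel84} and remarks that the Rayleigh--Ritz argument carries over verbatim once one uses the weighted Green's identity (i.e.\ the divergence theorem applied to $\e^\phi f\nabla g$). Your closed-quadratic-form workaround for the Sobolev density step is a standard and legitimate way to tidy up the convergence issue Chavel handles by approximation, and the equality analysis correctly identifies the $\lambda_{k}$-eigenspace.
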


% Introduction to Courant's nodal domain theorem
A \emph{nodal domain} of a function $f \in C^0(M)$ is a maximal connected component of $M$ where $f$ is positive or negative. The number of nodal domains in the $k$th eigenfunction of $\Delta_\mu$ under Dirichlet or Neumann boundary conditions is bounded above by $k$. Courant \cite[p.452]{courant} proves this bound assuming each nodal domain has piecewise smooth boundary. Chavel \cite[pp.19--23]{chavel84} gives a proof in the boundaryless and Dirichlet cases which avoids the piecewise smooth boundary requirement via an approximation argument. Using a more general version of Green's formula \cite[Proposition 5.8 and remark after Proposition 5.10]{hofmann-mitrea-taylor}, we prove Courant's nodal domain theorem in the Neumann case without the piecewise-smooth boundary assumption, since we could not readily find this in the literature. 

% Courant's nodal domain theorem
\begin{theorem}[Courant's nodal domain theorem] \label{thm:courant}
Let $(M,g,\mu)$ be a weighted manifold. Then the $k$th Neumann or Dirichlet eigenfunction $u_k$ of $\Delta_\mu$ on $M$ has at most $k$ nodal domains. 
\end{theorem}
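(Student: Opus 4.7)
The plan is the classical Courant contradiction using the variational characterisation of Theorem~\ref{thm:variational}, with the only novelty being the use of the generalised Green's formula of \cite{hofmann-mitrea-taylor} to avoid any smoothness assumption on the nodal set. I treat the Neumann and Dirichlet cases in parallel, letting $\lambda_k$ denote $\lambda_{k,N}$ or $\lambda_{k,D}$ respectively.

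Suppose for contradiction that $u_k$ has at least $k+1$ nodal domains $N_1,\ldots,N_{k+1}$. For $i=1,\ldots,k$, define $v_i := u_k \cdot \mathbf{1}_{N_i}$, extended by zero off $N_i$. Each $v_i$ is continuous (since $u_k$ vanishes on $\partial^M N_i \cap \Int M$, and on $\partial^M N_i \cap \partial M$ under Dirichlet conditions), with weak gradient $\nabla v_i = (\nabla u_k)\cdot \mathbf{1}_{N_i}$, placing $v_i$ in $W^{1,2}(M;\mu)$, and in $W^{1,2}_0(M;\mu)$ under Dirichlet conditions. The $k-1$ orthogonality constraints $\int_M u_j \sum_{i=1}^k c_i v_i \dd\mu = 0$, for $j=1,\ldots,k-1$, in $k$ unknowns $c_i$ admit a nontrivial solution, yielding a test function $v := \sum_{i=1}^k c_i v_i$ orthogonal to $u_1,\ldots,u_{k-1}$.

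The heart of the proof is the exact identity $\||\nabla v|\|_{L^2(M;\mu)}^2 = -\lambda_k \|v\|_{L^2(M;\mu)}^2$. To derive it, I would apply the weighted Green's formula from \cite[Proposition 5.8 and remark after Proposition 5.10]{hofmann-mitrea-taylor} to $u_k$ on each nodal domain $N_i$, obtaining
\begin{align*}
\int_{N_i} |\nabla u_k|^2 \dd\mu = -\int_{N_i} u_k \, \Delta_\mu u_k \dd\mu + \int_{\partial^M N_i} u_k \frac{\partial u_k}{\partial \mathbf{n}} \dd\mu_{n-1}.
\end{align*}
The boundary integrand vanishes pointwise: on $\partial^M N_i \cap \Int M$ we have $u_k = 0$ by definition of a nodal domain, while on $\partial^M N_i \cap \partial M$ either the Neumann condition $\partial u_k / \partial \mathbf{n} = 0$ or the Dirichlet condition $u_k = 0$ kills the integrand. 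Combining with $\Delta_\mu u_k = \lambda_k u_k$ and summing over $i$ using the pairwise-disjoint supports of the $v_i$ yields the identity.

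Theorem~\ref{thm:variational} gives $\lambda_k \ge -\||\nabla v|\|^2/\|v\|^2$, and the identity just derived forces equality, so the equality clause of Theorem~\ref{thm:variational} makes $v$ a $C^\infty$ eigenfunction of $\Delta_\mu$ with eigenvalue $\lambda_k$. But $v \equiv 0$ on the open set $N_{k+1}$, and Aronszajn's unique continuation theorem for second-order linear elliptic operators with smooth coefficients then forces $v \equiv 0$ on all of $M$, contradicting the choice of $(c_i)$. The main obstacle is the first step: verifying the hypotheses of the generalised Green's formula on the possibly irregular nodal domains $N_i$ and confirming that the vanishing boundary trace argument is valid in this generality — this is exactly what \cite[Proposition 5.8 and remark after Proposition 5.10]{hofmann-mitrea-taylor} is supposed to supply. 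Once that regularity issue is settled, the remainder of the argument is essentially algebraic.
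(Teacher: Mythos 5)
Your overall strategy is the same as the paper's: restrict $u_k$ to each of $k$ nodal domains, form a linear combination orthogonal to $u_1,\ldots,u_{k-1}$, show its Rayleigh quotient equals $-\lambda_k$, invoke the equality clause of Theorem~\ref{thm:variational} to conclude it is an eigenfunction, and then reach a contradiction via Aronszajn's unique continuation because the linear combination vanishes on the unused nodal domain $N_{k+1}$. You correctly identify the only real work as justifying the integration-by-parts identity $\int_{N_i}|\nabla u_k|^2\dd\mu = -\lambda_k\int_{N_i}u_k^2\dd\mu$.

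The gap is in how you propose to close that step. You suggest applying the generalised Green's formula of \cite{hofmann-mitrea-taylor} \emph{directly} to $u_k$ on each nodal domain $N_i$, with the vanishing-trace argument on $\partial^M N_i$. But \cite{hofmann-mitrea-taylor} is a divergence theorem for sets of locally finite perimeter (and with some Ahlfors-type control), and as the paper notes explicitly (in the footnote attached to \eqref{eq:nablau}), it does \emph{not} follow from the setup that a nodal domain $N_i$ has locally finite perimeter, so the hypotheses of the cited formula are not verified for $N_i$ itself. The paper circumvents this exactly as Chavel does: by Sard's theorem, the superlevel sets $N_{i,s} := \{p \in N_i : u_k(p)^2 > s\}$ have smooth codimension-$1$ boundary for almost every $s$, so one picks a sequence $s_j \downarrow 0$ of such good levels, applies Green's formula to $u_k - (\pm\sqrt{s_j})$ on $N_{i,s_j}$ where the boundary integral vanishes cleanly (because the shifted function is identically zero on $\partial^M N_{i,s_j}$ and the Neumann/Dirichlet condition handles $\partial M$), and then lets $j \to \infty$. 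Without this interior approximation your appeal to \cite{hofmann-mitrea-taylor} is not justified, and the rest of the argument — which, as you say, is essentially algebraic — does not get off the ground. So your instinct that this is ``the main obstacle'' is right, but the obstacle is not settled by the citation alone; the approximation by smooth superlevel sets is the missing idea.
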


% Proof of Courant's nodal domain theorem
\begin{proof}
We prove only the Neumann case; the proof in \cite[pp.19-23]{chavel84} for the Dirichlet case extends immediately to weighted manifolds. 
Let $G_1,\ldots,G_k,G_{k+1},\ldots$ denote the nodal domains of $u_k$. For each $j=1,\ldots,k$, define $\psi_j \in W^{1,2}(M;\mu)$ by 
\begin{align*}
    \psi_j:=\begin{cases} u_k|_{G_j}, &\text{on } G_j, \\ 0, &\text{elsewhere}. \end{cases}
\end{align*}
Using Chavel's approximation argument \cite[pp.21--22]{chavel84} and the version of Green's formula in \cite[Proposition 5.8 and remark after Proposition 5.10]{hofmann-mitrea-taylor}, as in \eqref{eq:nablau}--\eqref{eq:lambdauu} below, for each $j$ we have $\frac{\| |\nabla \psi_j| \|_{L^2(G_j;\mu)}^2}{\|\psi_j\|_{L^2(G_j;\mu)}^2}=-\lambda_{k,N}$. 
One can select constants $\alpha_1,\ldots,\alpha_k \in \mathbb{R}$, not all zero, such that 
\begin{align*}
    f := \sum_{j=1}^k \alpha_j \psi_j
\end{align*}
satisfies 
\begin{align*}
    \int_M u_i f \dd\mu = 0,
\end{align*}
for each $i=1,\ldots,k-1$ (see e.g.\ \cite[p.17]{chavel84}). Noting that the $\psi_j$ are disjointly supported, we have 
\begin{align*}
    \frac{\||\nabla f|\|_{L^2(M;\mu)}^2}{\|f\|_{L^2(M;\mu)}^2} = \frac{\sum_{j=1}^k \alpha_j^2 \||\nabla \psi_j|\|^2_{L^2(M;\mu)}}{\sum_{j=1}^k \alpha_j^2 \|\psi_j\|_{L^2(M;\mu)}^2} ={-} \frac{\lambda_{k,N} \sum_{j=1}^k \alpha_j^2 \|\psi_j\|_{L^2(M;\mu)}^2}{\sum_{j=1}^k \alpha_j^2 \|\psi_j\|_{L^2(M;\mu)}^2} ={-} \lambda_{k,N}. 
\end{align*}
Thus, Theorem \ref{thm:variational} implies $f$ is an eigenfunction of $\Delta_\mu$ with eigenvalue $\lambda_{k,N}$ vanishing identically on $G_{k+1}$. But then Aronszajn's unique continuation principle \cite{aronszajn} implies that $f$ vanishes identically on $M$, which is a contradiction. 
% End proof
\end{proof}

%%%%%%%%%%%%%
\section{Classical and higher Cheeger inequalities} \label{sec:cheeger-section}
%%%%%%%%%%%%%

% Lower Cheeger inequalities
\subsection{Cheeger inequalities for the first nonzero eigenvalue} \label{sec:cheegerintro}

The classical Cheeger inequalities provide an explicit bound away from 0 for $\lambda_{1,D}$ or $\lambda_{2,N}$, in terms of $h_{1,D}$ or $h_{2,N}$. Cheeger \cite{cheeger} proves the boundaryless and Dirichlet cases, while Maz'ya \cite{maz'ya} (summarised in English in e.g.\ \cite[Sec.\ 6]{grigor'yan99}) independently proves a slightly stronger result some years prior. Yau \cite[Sec.\ 5, Corollary 1]{yau}, and later Buser \cite[Theorem 1.6]{buser80a}, prove the Neumann case. The Cheeger inequality can also be extended to metric measure spaces (including weighted manifolds). 
De Ponti and Mondino \cite[Theorem 3.6]{de-ponti-mondino} and Funano \cite[Lemma 7.1]{funano} give variants of the Cheeger inequality for metric spaces (including weighted manifolds), with a Rayleigh quotient in place of an eigenvalue. 

 Several other upper bounds on eigenvalues of $\Delta$ exist, which do not depend on the Cheeger constant (see for example \cite{hassannezhad} and references therein). Fewer bounds exist on the Cheeger constant: Ledoux \cite[Theorem 5.3]{ledoux} and Milman \cite[Theorem 1.5]{milman} have obtained bounds on the Cheeger constant in terms of concentration inequalities, while Dai et al \cite[Theorem 1.4]{dai} have obtained an upper bound on the Cheeger constant on convex manifolds in terms of the manifold's dimension, Ricci curvature and diameter. 

\begin{theorem}[Cheeger's inequality] \label{thm:firstcheeger} \hfill 

\begin{itemize}
\item \cite{cheeger}: Let $(M,g)$ be an unweighted, boundaryless, compact smooth Riemannian manifold. Then
\begin{align}
    \lambda_{2,\emptyset} \le -\frac{1}{4} h_{2,\emptyset}^2. \label{eq:boundarylessstaticcheeger}
\end{align}

\item \cite{cheeger,maz'ya} (see also \cite[Sec.\ 6]{grigor'yan99}): Let $(M,g)$ be an unweighted, connected, compact smooth Riemannian manifold with nonempty, smooth boundary. Then
\begin{align}
    \lambda_{1,D} \le -\frac{1}{4} h_{1,D}^2. \label{eq:dirichletstaticcheeger}
\end{align}

\item \cite[Sec.\ 5, Corollary 1]{yau}, \cite[Theorem 1.6]{buser80a}: Let $(M,g)$ be an unweighted, compact smooth Riemannian manifold with nonempty, smooth boundary.  Then 
\begin{align}
    \lambda_{2,N} \le -\frac{1}{4} h_{2,N}^2. \label{eq:neumannstaticcheeger}
\end{align}
\end{itemize}
\end{theorem}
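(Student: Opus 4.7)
The plan is to prove all three parts by a single strategy: combine the Rayleigh identity for a first eigenfunction with Cauchy--Schwarz and the coarea formula applied to the \emph{square} of the eigenfunction, and then invoke the definition of the appropriate Cheeger constant on almost every superlevel set.

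I first choose the test function. For \eqref{eq:dirichletstaticcheeger} I take $v$ to be one of the signed components of the first Dirichlet eigenfunction $u_1$, so that $v \ge 0$, $v \in W_0^{1,2}(M)$, and $v|_{\partial M}=0$. For \eqref{eq:boundarylessstaticcheeger} and \eqref{eq:neumannstaticcheeger} I take $u = u_2$, which is orthogonal to the constants and hence sign-changing, pick a nodal domain $\Omega$ of $u$ with $V(\Omega) \le V(M)/2$ (such $\Omega$ exists because the nodal domains partition $M$ up to measure zero), and set $v:=u \cdot \mathbf{1}_\Omega$. A mixed-boundary integration by parts — using $v|_{\partial^M \Omega}=0$ on the internal nodal boundary and $\partial v/\partial \mathbf{n} = 0$ on $\partial M \cap \overline{\Omega}$, with the Green-type identity \cite[Proposition 5.8 and remark after Proposition 5.10]{hofmann-mitrea-taylor} cited in the proof of Theorem \ref{thm:courant} to absorb the possible non-smoothness of $\partial^M \Omega$ — then gives the Rayleigh identity $\int_M |\nabla v|^2 \, dV = -\lambda \int_M v^2 \, dV$, with $\lambda$ being $\lambda_{1,D}$, $\lambda_{2,\emptyset}$, or $\lambda_{2,N}$ respectively.

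Next, Cauchy--Schwarz applied to $\nabla(v^2) = 2 v \, \nabla v$ yields $\int_M |\nabla v^2| \, dV \le 2\sqrt{-\lambda}\, \int_M v^2 \, dV$. The coarea formula rewrites the left side as $\int_0^\infty V_{n-1}(\{v^2 = t\}) \, dt$, while Cavalieri rewrites the right-hand integrand as $\int_0^\infty V(\{v^2 > t\}) \, dt$. By Sard's theorem, for almost every $t>0$ the superlevel set $A_t := \{v^2 > t\}$ has smooth relative boundary with $V_{n-1}(\partial^M A_t) = V_{n-1}(\{v^2=t\})$ and is admissible for the relevant Cheeger problem: in the Dirichlet case, $v|_{\partial M}=0$ forces $\overline{A_t}\subset \Int M$, so $A_t \in \mathscr{P}_D(M)$ and $V_{n-1}(\partial A_t) \ge h_{1,D}\, V(A_t)$; in the Neumann and boundaryless cases, $A_t \subset \Omega$ has $V(A_t) \le V(M)/2$, and the equivalence (justified in the paragraph after Definition \ref{def:whcheeg}) between $h_{2,N}$ and the infimum of $V_{n-1}(\partial^M A)/V(A)$ over sets of volume at most $V(M)/2$ gives $V_{n-1}(\partial^M A_t) \ge h_{2,N}\, V(A_t)$. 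Integrating this pointwise bound in $t$ and combining with Cauchy--Schwarz produces $h \int_M v^2 \, dV \le 2\sqrt{-\lambda}\, \int_M v^2 \, dV$, which rearranges to $\lambda \le -h^2/4$.

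The main obstacle is the integration by parts on a nodal domain whose topological boundary may fail to be smooth precisely where $\nabla u$ vanishes, compounded in the Neumann case by the meeting of $\partial^M \Omega$ with $\partial M$. In the Dirichlet and boundaryless cases this is handled by Chavel's approximation argument \cite[pp.19--22]{chavel84}; in the Neumann case I would appeal to the same generalised Green's formula used in Theorem \ref{thm:courant}. A secondary technical point — that the smoothness hypothesis in Definition \ref{def:packing} holds for $A_t$ for almost every $t$ — follows from Sard's theorem applied to $v^2$ on the open set $\{\nabla v \neq 0\}$, together with the observation that the exceptional set $\{\nabla v = 0\}$ contributes nothing to the coarea integral.
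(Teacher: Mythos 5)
Your argument is correct and reproduces the same computational chain the paper uses in Theorem~\ref{thm:levelset} (from which the paper derives these classical inequalities as Corollary~\ref{thm:firstcheegerw}): a Rayleigh identity on a nodal domain obtained via the Hofmann--Mitrea--Taylor Green's formula and Chavel's approximation trick, Cauchy--Schwarz applied to $\nabla(v^2)$, the coarea/Cavalieri formulas, Sard's theorem to make a.e.\ superlevel set admissible, and the definition of the appropriate Cheeger constant. The only difference is one of organization: you integrate the pointwise bound $\mathcal{J}(A_t)\ge h$ directly to recover the classical inequality, whereas the paper instead computes the weighted average $\bar h$ of $\mathcal{J}(G_s)$, bounds $\lambda\le -\tfrac14\bar h^2$, and then extracts the stronger constructive conclusion that a positive-measure set of levels satisfies the bound.
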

The proofs of equations \eqref{eq:boundarylessstaticcheeger}-\eqref{eq:neumannstaticcheeger} all proceed similarly, using the coarea formula to relate the geometry of the level sets of an eigenfunction $u$ to its Dirichlet energy $\||\nabla u|\|^2$. Similar techniques are used in e.g.\ \cite{talenti,grigor'yan-netrusov-yau,klartag}. These results extend directly to weighted manifolds, and even to more general metric measure spaces (see e.g.\ \cite[Theorem 3.6]{de-ponti-mondino}), as well as to eigenvalues of the $p$-Laplace operator (e.g.\ \cite{matei,lefton-wei}). {On boundaryless manifolds with \emph{Ricci curvature} bounded below by $-K\le 0$, the leading eigenvalue $\lambda_{2,\emptyset}$ and the Cheeger constant $h_{2,\emptyset}$ satisfy $\lambda_{2,\emptyset} \ge -2.2\sqrt{K}h_{2,\emptyset} - 4.4h_{2,\emptyset}^2)$ (see \cite{buser,ledoux94,ledoux,de-ponti-mondino}, stated here with the constants from \cite{de-ponti-mondino}). Thus, $\lambda_{2,\emptyset}$ and $h_{2,\emptyset}$ are comparable,  in the sense that if one approaches zero then the other must also approach zero \cite{buser80a}.}

%\blue{The same is true of $\lambda_{2,N}$ and $h_{2,N}$ for manifolds with boundary which are \emph{geodesically convex} \cite{de-ponti-mondino}\footnote{
%\blue{The Buser inequality in \cite{de-ponti-mondino} is proved for $\mathsf{RCD}(K,\infty)$ spaces, which include geodesically convex manifolds by \cite[Theorems 3.7 and 1.1 and sentence before Theorem 1.1]{han}). The result is stated for a Cheeger constant defined in terms of perimeter, which is bounded above by our Cheeger constant, as noted at the end of section \ref{sec:cheegerdef}.}}}.

We prove that some of the superlevel sets within any nodal domain of any eigenfunction of $\Delta$ have an upper bound on their Cheeger ratio, in terms of the corresponding eigenvalue (Theorem \ref{thm:levelset}). This yields a constructive version of Theorem \ref{thm:firstcheeger} (Corollary \ref{thm:firstcheegerw}), and also allows us to prove a constructive higher Cheeger inequality (Theorem \ref{thm:cheeger}). 

For any nodal domain $G$ of a function $f\in C^0(M)$, we let $\range(f^2|_G):=\{s^2 : s \in f(G)\}$, and for any $s \in \range(f^2|_G)$, we define the \emph{$s$-superlevel set} of $f^2$ on $G$ as 
\begin{align}
    G_s:=\{p \in G : f(p)^2>s\}. \label{eq:Gsdef}
\end{align}
%A \emph{regular point} of a function $f \in C^\infty(M)$ is a point $p \in M$ such that $f$ has surjective differential $\ddd f_p$ at $p$, while a \emph{regular value} of $f$ is a value $s \in \mathbb{R}$ such that every $p \in f^{-1}(s)$ is a regular point. 
% We show that every nodal domain of an eigenfunction with eigenvalue $\lambda$ contains a family of sets whose Cheeger ratios are bounded above in terms of $\lambda$. Then the Cheeger inequality follows from the fact that Dirichlet eigenfunctions corresponding to $\lambda_{1,D}$ have one nodal domain, and eigenfunctions corresponding to $\lambda_{2,N}$ have two nodal domains. 

% Level set theorem
\begin{theorem} \label{thm:levelset}
Let $(M,g,\mu)$ be an $n$-dimensional weighted manifold. Let $u$ be some nonconstant Neumann, resp.\ Dirichlet, eigenfunction of $\Delta_\mu$, with eigenvalue $\lambda$. Let $G \subset M$ be any nodal domain of $u$. Then %we have $G_s \in \mathscr{P}_N(M)$ (resp.\ $G_s \in \mathscr{P}_D(M)$) for almost every $s \in u(G)$, and 
the set 
\begin{align}
    S_G := \mleft\{s \in \range(u^2|_G) : G_s \in \mathscr{P}_N(M), \lambda \le -\frac{1}{4}\mathcal{J}_N(G_s)^2 \mright\}, \label{eq:Sdef}
\end{align}
resp.\ 
\begin{align}
    S_G :=  \mleft\{s \in \range(u^2|_G) : G_s \in \mathscr{P}_D(M), \lambda \le -\frac{1}{4}\mathcal{J}_D(G_s)^2 \mright\}, \label{eq:Sdefd}
\end{align}
has positive Lebesgue measure satisfying the lower bound {
\begin{align}
    \Leb(S_G) \ge \frac{\| \bar{h} - \mathrm{h} \|_{L^1(\range(u^2|_G);\mathbb{P})} \|u\|_{L^2(G;\mu)}^2}{2 (\bar{h} - \inf_{s \in \range(u^2|_G)} \mathrm{h}(s))\mu(G)}, \label{eq:Smeasure}
\end{align}
where $\mathrm{h}$, $\bar{h}$ and $\mathbb{P}$ are defined in the proof below.} 
\end{theorem}
 
\begin{proof}
% Proof of level set theorem
We prove only the Neumann case; the Dirichlet case follows similarly. Firstly, we use the coarea formula to find an expression \eqref{eq:gradh} for the weighted average \eqref{eq:hdef} of $\mathcal{J}_N(G_s)$. Secondly, we use a Rayleigh quotient argument to bound $\lambda_{2,N}$ in terms of this weighted average (equation \eqref{eq:cheegerbarbound}). Lastly, we obtain our lower bound on the measure of $S_G$. 
% Our proof proceeds as follows. First, we obtain a lower bound on the Rayleigh quotient of $u|_G$ in terms of a weighted average value of $\mathcal{J}_N(G_s)$. Then we that this Rayleigh quotient is equal to $-\lambda$. 

The coarea formula (see e.g.\ \cite[13.4.2]{burago-zalgaller}) implies 
\begin{align}
    \int_G |\nabla (u^2)| \dd \mu = \int_{\range(u^2|_G)} \mu_{n-1}(\{p \in G : u^2(p)=s\}) \dd s. \label{eq:coarea}
\end{align}
It follows immediately from Sard's theorem (e.g.\ \cite[Theorem 6.10]{lee}), \cite[Theorem 6.2.8]{mukherjee} and the reasoning for \cite[Lemma 6.2.7]{mukherjee} that $G_s \in \mathscr{P}_N(M)$ and $\partial^M G_s=\{p \in G : u(p)^2=s\}$ for almost every $s \in \range(u^2|_G)$. 
For such $s$, we have $\mu_{n-1}(\{p \in G : u(p)^2=s\}) = \mu_{n-1}(\partial^M G_s)=\mathcal{J}_N(G_s) \mu(G_s)$, by the definition \eqref{eq:wcheegerratio}. Hence, we have 
\begin{align}
    \int_G |\nabla (u^2)| \dd \mu = \int_{\range(u^2|_G)} \mathcal{J}_N(G_s) \mu(G_s) \dd s. \label{eq:cheegerbarproof}
\end{align}
Define 
\begin{align}
    \bar{h}:=\frac{1}{\|u\|_{L^2(G;\mu)}^2} \int_{\range(u^2|_G)} \mathcal{J}_N(G_s) \mu(G_s) \dd s, \label{eq:hdef}
\end{align}
then $\bar{h}$ is the weighted average of $\mathcal{J}_N(G_s)$ over $\range(u^2|_G)$, according to the probability measure $\mathbb{P}$ on $\range(u^2|_G)$ given by $\mathbb{P}(L):=\int_L \frac{\mu(G_s)}{\|u\|_{L^2(G;\mu)}^2} \dd s$. Then \eqref{eq:cheegerbarproof} and \eqref{eq:hdef} yield 
\begin{align}
    \frac{\int_G |\nabla (u^2)| \dd \mu}{\|u\|_{L^2(G;\mu)}^2} =  \bar{h}. \label{eq:gradh}
\end{align}
Now, the Cauchy-Schwarz inequality implies 
\begin{align}
    2 \||\nabla u|\|_{L^2(G;\mu)} \|u\|_{L^2(G;\mu)} \ge 2\int_G u|\nabla u| \dd\mu = \int_G |\nabla(u^2)| \dd\mu. \label{eq:cauchyschwarz}
\end{align}
Using \eqref{eq:cauchyschwarz} and \eqref{eq:gradh}, we obtain 
\begin{align}
    \frac{\||\nabla u|\|_{L^2(G;\mu)}^2}{\|u\|_{L^2(G;\mu)}^2} \ge \frac{\mleft( \int_G |\nabla (u^2)| \dd \mu \mright)^2}{4\|u\|_{L^2(G;\mu)}^4} = \frac{1}{4} \bar{h}^2. \label{eq:rayleigh-h}
\end{align}

We can write $\||\nabla u|\|_{L^2(G;\mu)}^2$ as 
\begin{align}
    \||\nabla u|\|_{L^2(G;\mu)}^2=\int_G(\nabla u \cdot \nabla u) \dd\mu=\int_G \nabla u \cdot (\e^\phi \nabla u) \dd V. \label{eq:nablau}
\end{align}
Applying Green's formula (e.g.\ \cite[Proposition 5.8 and remark after Proposition 5.10]{hofmann-mitrea-taylor}) to $\e^\phi u \nabla u$ on $G$ via a short approximation argument%
% Footnote on Green's formula
\footnote{We apply Green's formula via an approximation argument, similarly to e.g.\ \cite[pp21--22]{chavel84}. We showed above that $G_s \in \mathscr{P}_N(M)$ for almost every $s \in \range(u^2|_G)$, but it does not follow that $G \in \mathscr{P}_N(M)$, or that $G$ has locally finite perimeter. Instead, choose some sequence $s_1,s_2,\ldots \in \range(u^2|_G)$ converging to 0, such that $G_{s_j} \in \mathscr{P}_N(M)$ for each $j$. Then taking $u_j:=u-s_j$ and applying Green's formula to $u_j \e^\phi \nabla u_j$ on $G_{s_j}$, and recalling \eqref{eq:deflaplacew}, yields $\int_{G_{s_j}} \nabla u_j \cdot (\e^\phi \nabla u_j) \dd V = -\int_{G_{s_j}} u_j \cdot \Delta_\mu u_j \dd \mu + \int_{(\partial^M G_{s_j}) \cup (\partial M \cap G_{s_j})} u_j \frac{\partial u_j}{\partial \mathbf{n}} \dd \mu_{n-1}$, where $\mathbf{n}$ is an outward unit normal to $\partial M$ or $\partial^M G_{s_j}$. But $u_j=0$ on $\partial^M G_{s_j}$ and $\frac{\partial u_j}{\partial\mathbf{n}}=0$ on $\partial M \cap G_{s_j}$, so the second integral disappears, and taking $j \to \infty$, we obtain \eqref{eq:nodalrayleigh}. 
}, %
recalling \eqref{eq:deflaplacew} and noting that $u=0$ on $\partial^M G$ and $\frac{\partial u}{\partial \mathbf{n}}=0$ on $\partial G \cap \partial M$ (where $\mathbf{n}$ denotes the outward normal of $M$), we obtain 
\begin{align}
    \int_G \nabla u \cdot (\e^\phi \nabla u) \dd V = -\int_G u \cdot \Delta_\mu u \dd\mu + 0. \label{eq:nodalrayleigh}
\end{align}
Since $u \cdot \Delta_\mu u=\lambda u^2$, we have 
\begin{align}
    -\int_G u \cdot \Delta_\mu u \dd\mu = -\lambda \|u\|_{L^2(G;\mu)}^2. \label{eq:lambdauu}
\end{align}
Hence \eqref{eq:nablau}--\eqref{eq:lambdauu} and \eqref{eq:rayleigh-h} imply 
\begin{align}
    \lambda = -\frac{\||\nabla u|\|_{L^2(G;\mu)}^2}{\|u\|_{L^2(G;\mu)}^2} \le -\frac{1}{4} \bar{h}^2. \label{eq:cheegerbarbound}
\end{align}
But $\bar{h}$ is a weighted average over $s \in \range(u^2|_G)$ of $\mathcal{J}_N(G_s)$, so the set $S_G' := \{s \in \range(u^2|_G) : \mathcal{J}_N(G_s) \le \bar{h} \}$ has positive measure. By \eqref{eq:cheegerbarbound} and the definition \eqref{eq:Sdef}, we have $S_G' \subseteq S_G$, so $S_G$ must also have positive measure. 

% Proof cont'd: existence of set S
We can put a lower bound on the measure of $S_G$, as follows. Let $\mathrm{h}(s):=\mathcal{J}_N(G_s)$. Then we have
\begin{align*}
    \int_{S_G'} (\bar{h}-\mathrm{h}(s)) \frac{\mu(G_s)}{\|u\|_{L^2(G;\mu)}^2} \dd s = \int_{S_G'} (\bar{h}-\mathrm{h}(s)) \dd \mathbb{P}(s) = \frac{\| \bar{h} - \mathrm{h} \|_{L^1(\range(u^2|_G);\mathbb{P})}}{2}, 
\end{align*}
and 
\begin{align*}
    \int_{S_G'} (\bar{h}-\mathrm{h}(s)) \frac{\mu(G_s)}{\|u\|_{L^2(G;\mu)}^2} \dd s 
    &\le \int_{S_G'} \frac{\mu(G_s)}{\|u\|_{L^2(G;\mu)}^2} \dd s \, \mleft(\bar{h} - \inf_{s \in \range(u^2|_G)} \mathrm{h}(s)\mright) \\
    &\le \Leb(S_G') \frac{\mu(G)}{\|u\|_{L^2(G;\mu)}^2} \mleft(\bar{h} - \inf_{s \in \range(u^2|_G)} \mathrm{h}(s)\mright) \\
    &\le \Leb(S_G) \frac{\mu(G)}{\|u\|_{L^2(G;\mu)}^2} \mleft(\bar{h} - \inf_{s \in \range(u^2|_G)} \mathrm{h}(s)\mright),
\end{align*}
{which we can rearrange to yield \eqref{eq:Smeasure}.}
A similar result holds in the Dirichlet case, replacing $\mathcal{J}_N$ with $\mathcal{J}_D$ in the definition of $\bar{h},\mathrm{h},\mathbb{P}$, and noting that $\overline{G_s} \cap \partial M=0$ for all $s \ne 0$. 
\end{proof}

% Weighted Cheeger inequality
\begin{corollary} \label{thm:firstcheegerw}
Let $(M,g,\mu)$ be a weighted manifold. For each Neumann eigenfunction $u$ corresponding to $\lambda_{2,N}$, there is a nodal domain $G$ of $u$ such that the set $S_G$ defined in \eqref{eq:Sdef} has positive measure {and satisfies \eqref{eq:Smeasure}}, and for each $s \in S_G$, defining $G_s$ as in \eqref{eq:Gsdef}, the 2-packing $\{G_s,M \backslash \overline{G_s}\}$ satisfies 
\begin{align}
    \lambda_{2,N} \le -\frac{1}{4}\mathcal{J}_N(\{G_s,M \backslash \overline{G_s}\})^2. \label{eq:firstcheegerneumannls}
\end{align}
If $\partial M \not= \emptyset$, there is a unique Dirichlet eigenfunction $u$ corresponding to $\lambda_{1,D}$ (up to scaling), and this $u$ has only a single nodal domain $G = M \backslash \partial M$. The set $S_G$ defined in \eqref{eq:Sdefd} has positive measure, and for each $s \in S_G$, the set $G_s$ defined in \eqref{eq:Gsdef} satisfies 
\begin{align} 
    \lambda_{1,D} \le -\frac{1}{4} \mathcal{J}_D(G_s)^2. \label{eq:firstcheegerdirichletls} 
\end{align} 
\end{corollary}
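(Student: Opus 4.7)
The proposal is to derive both halves of the corollary from Theorem \ref{thm:levelset} applied to a suitably chosen nodal domain of the eigenfunction, and then upgrade the single-set bound on $\mathcal{J}_N(G_s)$ to a full 2-packing bound in the Neumann case; the Dirichlet case is essentially immediate.

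For the Neumann statement, I would first argue that any eigenfunction $u$ for $\lambda_{2,N}$ has exactly two nodal domains. Courant's theorem (Theorem \ref{thm:courant}) gives at most two, and orthogonality of $u$ to the constant $\lambda_{1,N}=0$ eigenfunction forces $\int_M u \dd\mu = 0$, so $u$ changes sign and has at least two. Labelling the two nodal domains $G$ and $G'$ with $\mu(G) \le \mu(G')$, and applying Theorem \ref{thm:levelset} to $G$, yields a positive-measure set $S_G \subseteq \range(u^2|_G)$ such that, for every $s \in S_G$, one has $G_s \in \mathscr{P}_N(M)$ and $\lambda_{2,N} \le -\tfrac{1}{4}\mathcal{J}_N(G_s)^2$.

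To upgrade this to a bound on $\mathcal{J}_N(\{G_s, M \setminus \overline{G_s}\})$, I need to check that $M \setminus \overline{G_s}$ is itself an admissible piece of a Neumann packing and that its Cheeger ratio is dominated by $\mathcal{J}_N(G_s)$. The set $M \setminus \overline{G_s}$ is relatively open; since $\partial^M G_s$ is a smooth codimension-1 hypersurface with $G_s$ lying locally on one side, a short point-set calculation gives $\partial^M(M\setminus\overline{G_s}) = \partial^M G_s$ and $\partial(\partial^M G_s) = \partial^M G_s \cap \partial M$, placing $M \setminus \overline{G_s}$ in $\mathscr{P}_N(M)$. Because $G_s \subseteq G$, $\overline{G_s} \subseteq \overline{G}$ is disjoint from the open nodal domain $G'$, so $G' \subseteq M \setminus \overline{G_s}$, which both confirms nonemptiness and gives $\mu(M\setminus\overline{G_s}) \ge \mu(G') \ge \mu(G) \ge \mu(G_s)$. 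Therefore $\mathcal{J}_N(M\setminus\overline{G_s}) \le \mathcal{J}_N(G_s)$, the maximum in \eqref{eq:wcheegerratio} equals $\mathcal{J}_N(G_s)$, and \eqref{eq:firstcheegerneumannls} follows from the bound supplied by Theorem \ref{thm:levelset}.

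The Dirichlet case is cleaner: $\lambda_{1,D}$ is simple and its (unique up to scaling) eigenfunction has strict sign on $\Int M$ by a standard argument -- if $u$ changed sign, $|u|$ would also minimize the Rayleigh quotient and hence be an eigenfunction, but the strong maximum principle together with Aronszajn's unique continuation would forbid interior zeros -- so $u$ has the single nodal domain $G = M \setminus \partial M$, and \eqref{eq:firstcheegerdirichletls} is then immediate from the Dirichlet branch of Theorem \ref{thm:levelset} via the definition \eqref{eq:Sdefd} of $S_G$. I expect the main obstacle to be the Neumann packing verification, and in particular the identification $\partial^M(M\setminus\overline{G_s}) = \partial^M G_s$ together with the measure bound $\mu(G_s) \le \mu(M\setminus\overline{G_s})$; this is precisely what forces the choice of $G$ as the smaller of the two nodal domains.
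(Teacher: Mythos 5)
Your proposal is correct and follows essentially the same route as the paper: apply Theorem \ref{thm:levelset} to the smaller nodal domain $G$ in the Neumann case and to $G = M\setminus\partial M$ in the Dirichlet case, then observe that the 2-packing $\{G_s, M\setminus\overline{G_s}\}$ has $\mathcal{J}_N(\{G_s, M\setminus\overline{G_s}\}) = \mathcal{J}_N(G_s)$. The only difference is one of detail: where the paper cites known results (Propositions 4.5.8--4.5.9 of Lablée) for the nodal-domain counts of the $\lambda_{2,N}$ and $\lambda_{1,D}$ eigenfunctions, you reconstruct them from orthogonality to constants and the maximum principle; and where the paper simply asserts the equality of Cheeger ratios for the 2-packing, you spell out the verification that $M\setminus\overline{G_s}\in\mathscr{P}_N(M)$, that $\partial^M(M\setminus\overline{G_s}) = \partial^M G_s$, and that $\mu(G_s) \le \mu(G) \le \mu(G') \le \mu(M\setminus\overline{G_s})$ -- all of which is correct and makes the step more self-contained.
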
 

\begin{proof}
By Theorem \ref{thm:courant} and e.g.\ \cite[Propositions 4.5.8--4.5.9]{lablee}, the eigenfunction corresponding to $\lambda_{1,D}$ has one nodal domain $G=M \backslash \partial M$, while each eigenfunction corresponding to $\lambda_{2,N}$ has two nodal domains, and Theorem \ref{thm:levelset} immediately yields \eqref{eq:firstcheegerdirichletls}. 
In the Neumann case, let $G$ denote whichever nodal domain of $u$ satisfies $\mu(G)\le \mu(M \backslash \overline{G})$. Then for each $s \in S_G$, the 2-packing $\{G_s,M \backslash \overline{G_s}\}$ satisfies $\mathcal{J}_N(\{G_s,M \backslash \overline{G_s}\}) = \mathcal{J}_N(G_s)$, and Theorem \ref{thm:levelset} yields \eqref{eq:firstcheegerneumannls}. 
\end{proof}

%%%%%%%%%%%%%%%%%%%%%%%%%%%%%%%%%%%%%%%
%%%%%%%%%%%%%%%%%%%%%%%%%%%%%%%%%%%%%%%
\subsection{Higher Cheeger inequalities}    \label{sec:hcheeg} % Section 4.2 
%%%%%%%%%%%%%%%%%%%%%%%%%%%%%%%%%%%%%%%
%%%%%%%%%%%%%%%%%%%%%%%%%%%%%%%%%%%%%%%

% Miclo's Cheeger inequality 
On boundaryless manifolds, Miclo \cite{miclo15} and Funano \cite{funano} have proven Cheeger inequalities for $h_{k,\emptyset}$ for all $k \ge 3$. Both papers make use of higher Cheeger inequalities for the graph Laplacian on finite graphs \cite{logt}, following a procedure outlined by Miclo in \cite[Conjecture 13]{miclo08}. 
Miclo states these results for unweighted manifolds, but notes that they also apply to weighted manifolds with $C^\infty$ measures \cite[p.327]{miclo15}. %

\begin{theorem}[{\cite[Theorem 7]{miclo15}}] \label{thm:miclocheeger}
There is a universal constant $\hat{\eta} > 0$ such that, for any boundaryless weighted manifold $(M,g,\mu)$ and for all $k \ge 1$, 
\begin{align}
    \lambda_{k,\emptyset} \le -\frac{\hat{\eta}}{k^6} h_{k,\emptyset}^2. \label{eq:miclocheeger}
\end{align}
\end{theorem}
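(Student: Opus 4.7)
The plan is to reduce to the higher Cheeger inequality for finite weighted graphs proved by Lee, Gharan and Trevisan \cite{logt}, following the program sketched by Miclo in \cite[Conjecture 13]{miclo08}. The graph inequality supplies a universal constant $c>0$ such that every finite weighted graph $\Gamma$ satisfies $\nu_k(\Gamma) \ge c\,h_k(\Gamma)^2/k^6$, where $\nu_k$ and $h_k$ are respectively the $k$-th eigenvalue of the normalised graph Laplacian and the $k$-th graph Cheeger constant.

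First, I would use the first $k$ Neumann eigenfunctions $u_1,\ldots,u_k$ of $\Delta_\mu$ to produce $k$ disjointly supported test functions $\psi_1,\ldots,\psi_k \in W^{1,2}(M;\mu)$, each with Rayleigh quotient bounded by a universal multiple of $-k^{-6}\lambda_{k,\emptyset}$ times an appropriate $k$-power (the target is $-k^6\lambda_{k,\emptyset}$ upstairs). The natural tool is the spectral-embedding and random-partition argument of Lee-Gharan-Trevisan: form the map $F:M\to\mathbb{R}^k$ by $F(x):=(u_1(x),\ldots,u_k(x))$; observe that each coordinate satisfies $\||\nabla u_i|\|_{L^2(M;\mu)}^2 \le -\lambda_{k,\emptyset}\,\|u_i\|_{L^2(M;\mu)}^2$; then apply a random metric partition of the image of $F$ to carve out $k$ pairwise separated regions $U_1,\ldots,U_k \subset M$, and set $\psi_j := \max(0,\langle F,v_j\rangle - t_j)$ for carefully chosen directions $v_j \in \mathbb{R}^k$ and thresholds $t_j$ so that $\Supp\psi_j \subset U_j$ and $\psi_j$ vanishes on $\partial^M U_j$.

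Next, I would run the coarea and Cauchy-Schwarz argument from the proof of Theorem \ref{thm:levelset} on each $\psi_j$ separately. The chain of inequalities \eqref{eq:coarea}--\eqref{eq:rayleigh-h} only requires $\psi_j \in W^{1,2}(M;\mu)$ with $\psi_j=0$ on $\partial^M U_j$, not the eigenfunction equation, and so yields a superlevel set $A_j \in \mathscr{P}_N(M)$ with $A_j \subset U_j$ and $\mathcal{J}_\emptyset(A_j)^2 \le 4\,\||\nabla\psi_j|\|_{L^2(U_j;\mu)}^2 / \|\psi_j\|_{L^2(U_j;\mu)}^2$. Since the $U_j$ are pairwise disjoint, $\{A_1,\ldots,A_k\} \in \mathscr{P}_{k,N}(M)$, and combining with the Rayleigh-quotient bound from Step 1 produces a universal $C>0$ with $h_{k,\emptyset}^2 \le -C\,k^6\,\lambda_{k,\emptyset}$, giving the claimed inequality with $\hat{\eta}=1/C$.

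The main obstacle is Step 1: the Lee-Gharan-Trevisan random metric partition works cleanly on graphs, but its transfer to the continuous setting requires almost-sure regularity of the partition boundaries so that the truncated $\psi_j$ remain in $W^{1,2}(M;\mu)$ with controlled gradient, and insensitivity to the smoothness of the density $\e^\phi$. Miclo bypasses these issues by first discretising $M$ at a scale finer than the smallest structural feature of $u_1,\ldots,u_k$, applying the graph result \cite{logt} to the resulting weighted graph whose Dirichlet form and Cheeger constants approximate those of $(M,g,\mu)$, and then lifting the graph $k$-packing back to a manifold $k$-packing via the cells of the discretisation; the factor $k^{-6}$ is inherited unchanged from the graph bound and appears not to be sharp but is the cost of the random-projection step.
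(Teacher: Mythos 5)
This is a cited result: the paper states it as \cite[Theorem 7]{miclo15} and does not reproduce an argument, so there is no in-paper proof to compare against. Your Steps 1--2 sketch a direct continuous analogue of the Lee--Oveis Gharan--Trevisan spectral-embedding and random-partition scheme, which, as you yourself flag at the end, cannot be run on the manifold without resolving the almost-sure regularity of the partition boundaries and the $W^{1,2}$-membership of the truncated test functions; as written those two steps are a plan rather than a proof. Your final paragraph then correctly pivots to what Miclo actually does: discretise $(M,g,\mu)$ into a finite weighted graph whose Dirichlet form and Cheeger constants approximate those of the manifold, apply the graph higher-order Cheeger inequality from \cite{logt}, and lift the resulting $k$-partition back to a manifold $k$-packing through the discretisation cells. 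Two cautions. First, you misstate the graph bound: Lee--Oveis Gharan--Trevisan give $h_k(\Gamma) \lesssim k^2\sqrt{\nu_k(\Gamma)}$, hence $\nu_k(\Gamma) \gtrsim h_k(\Gamma)^2/k^4$ (not $k^6$), and the random-projection variant of their argument controls the $(1-\delta)k$-way constant with only a $\log k$ loss (this is the source of \eqref{eq:miclorcheeger}, not of \eqref{eq:miclocheeger}); so the $k^6$ in the present theorem is not ``inherited unchanged from the graph bound'' nor attributable to random projection, and your attribution of the exponent should be revised. Second, since the paper only invokes Miclo's theorem (here and again in Theorem \ref{thm:dmiclo}), what it needs is the statement, not a reconstruction of the proof; the task of giving a proof of this statement properly belongs to \cite{miclo15}.
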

\begin{theorem}[{\cite[Theorem 13]{miclo15}}] \label{thm:miclorcheeger}
There is a universal constant $\eta$ such that, for any boundaryless weighted manifold $(M,g,\mu)$ and for all $k \ge 1$, 
\begin{align}
    \lambda_{2k,\emptyset} \le -\frac{\eta}{\log(k+1)} h_{k,\emptyset}^2. \label{eq:miclorcheeger}
\end{align}
\end{theorem}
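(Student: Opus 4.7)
The plan is to reduce Theorem \ref{thm:miclorcheeger} to its graph-theoretic analogue, the Lee--Gharan--Trevisan higher Cheeger inequality. For a weighted graph, their theorem provides a universal constant $\eta_0$ so that the $2k$th eigenvalue of the graph Laplacian is at least $\eta_0\, h_k(G)^2/\log(k+1)$, where $h_k(G)$ denotes the $k$th graph Cheeger constant. Since the manifold statement is structurally identical after the sign conventions are matched, a natural strategy is to discretize $M$, invoke the graph inequality, and pass to a limit.

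First, for each scale $\varepsilon > 0$ I would construct a Voronoi (or simplicial) decomposition $T_\varepsilon$ of $M$ of mesh $\varepsilon$, and form a weighted graph $G_\varepsilon$ whose vertices are the cells of $T_\varepsilon$, whose edge weights are proportional to the $\mu_{n-1}$-measure of the shared faces, and whose vertex weights equal the $\mu$-measure of the corresponding cells. Spectral-convergence results (in the spirit of Burago--Ivanov--Kurylev) give that the $j$th eigenvalue of the combinatorial Laplacian of $G_\varepsilon$ converges to $-\lambda_{j,\emptyset}$ and that the graph $k$th Cheeger constants converge to $h_{k,\emptyset}$ as $\varepsilon \to 0$. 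Applying the graph inequality to each $G_\varepsilon$ and sending $\varepsilon \to 0$ would recover the bound $\lambda_{2k,\emptyset} \le -\frac{\eta_0}{\log(k+1)} h_{k,\emptyset}^2$ on $M$, with $\eta = \eta_0$.

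The hard part is coordinating both convergences simultaneously: eigenvalue convergence under refinement is standard, but transferring a near-optimal graph $k$-packing back to an element of $\mathscr{P}_{k,N}(M)$ requires smoothing the union of cell boundaries without appreciably increasing the Cheeger ratio, and controlling this loss uniformly as $\varepsilon \to 0$. An alternative, more intrinsic route would mimic the Lee--Gharan--Trevisan proof directly on $M$: form the spectral embedding $F := (u_1,\ldots,u_{2k})$ using the first $2k$ eigenfunctions of $\Delta_\mu$, use $F$ together with its induced Dirichlet energy to identify $k$ well-separated regions of $M$, build approximately orthogonal, localized bump functions supported near each region, plug these into the variational characterization (Theorem \ref{thm:variational}) to obtain $k$ test functions whose Rayleigh quotients are all at most $-\lambda_{2k,\emptyset}$, and finally apply superlevel-set rounding in the spirit of Theorem \ref{thm:levelset} to each bump to extract a $k$-packing with bounded Cheeger ratio. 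In this intrinsic approach, the $\log(k+1)$ factor enters through the padded random partition lemma used to guarantee the spread and localization of the bumps, and that partition step is where I expect the principal technical difficulty to lie.
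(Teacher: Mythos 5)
The paper does not supply a proof of Theorem~\ref{thm:miclorcheeger}: it is a direct citation of \cite[Theorem~13]{miclo15}, and the surrounding discussion in Section~\ref{sec:hcheeg} only records that Miclo and Funano ``make use of higher Cheeger inequalities for the graph Laplacian on finite graphs \cite{logt}, following a procedure outlined by Miclo in \cite[Conjecture~13]{miclo08}.'' So there is no internal argument to compare with; I can only assess your sketch against Miclo's stated strategy.

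Your first outline --- discretize $M$, invoke the Lee--Oveis Gharan--Trevisan graph inequality, and pass to a limit --- does track the strategy the paper attributes to Miclo, and your second outline (intrinsic LGT via spectral embedding, padded random partitions, and superlevel-set rounding as in Theorem~\ref{thm:levelset}) is a plausible alternative. Neither, however, is yet a proof. In the discretization route, the eigenvalue comparison is the easy side; the crux is that a near-optimal graph $k$-packing is a disjoint union of Voronoi cells, and you must turn it into a member of $\mathscr{P}_{k,N}(M)$ whose Neumann Cheeger ratio is close to the graph value, with error controlled uniformly in $k$ and as $\varepsilon\to 0$. Burago--Ivanov--Kurylev-type results give spectral convergence but say nothing about convergence of higher Cheeger constants, so that transfer must be proved separately; ``smoothing cell boundaries'' at a vanishing scale is exactly the substance of the argument, not a routine step, and without a quantitative perimeter-to-volume comparison between the union of cells and its smoothed version (tied to the graph edge weights) the universal constant $\eta$ is not recovered. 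The intrinsic route has an analogous unresolved core: the padded random partition lemma is a statement about finite metric-measure graphs, and extending it to the continuum with the separation and spread guarantees needed to feed Theorem~\ref{thm:variational} is nontrivial. In short, your proposal correctly identifies the shape of the argument and is consistent with the paper's description of Miclo's method, but the key lemma on each route --- the quantitative graph-packing-to-manifold-packing transfer, or the continuum padded-partition lemma --- is still missing, so \eqref{eq:miclorcheeger} is not established.
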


The factor of 2 in the $\lambda_{2k,\emptyset}$ in the previous theorem is arbitrary. Indeed, one can obtain the following from Miclo's proof of the previous theorem: there is a universal constant $\tilde{\eta}$ such that, for any boundaryless weighted manifold $(M,g,\mu)$ and for all $k \ge 1$ and $0<\delta<1$, 
\begin{align}
    \lambda_{k,\emptyset} \le -\frac{\tilde{\eta} \delta^6}{\log(k+1)} h_{\lceil (1-\delta) k \rceil,\emptyset}^2. \label{eq:miclorcheegerdelta}
\end{align}
In particular, taking $\delta=\frac{1}{2}$, we have 
\begin{align}
  \lambda_{2k-1,\emptyset} \le -\frac{\tilde{\eta}}{64 \log(k+1)} h_{k,\emptyset}^2. \label{eq:miclorcheegerodd}  
\end{align}

We {are} not aware of a closed-form expression for the constants in \eqref{eq:miclocheeger}--\eqref{eq:miclorcheegerdelta}. {On boundaryless manifolds with nonnegative Ricci curvature, Liu \cite[Theorem 4.1]{liu} and Funano \cite[Theorem 1.7]{funano} have each proposed higher Buser inequalities, showing that $h_{k,\emptyset}$ goes to 0 if and only if $\lambda_{k,\emptyset}$ goes to 0 (on a sequence of such manifolds).} 

% Parini's Cheeger inequality
Parini \cite[Theorem 5.4]{parini2} notes that the classical proof of the $k=2$ Neumann Cheeger inequality \eqref{eq:neumannstaticcheeger} extends to the $k=2$ Dirichlet case. Parini states his inequality for eigenfunctions of the $p$-Laplacian for $1 < p < \infty$ on subsets of $\mathbb{R}^n$ with Lipschitz boundary, but the same argument applies on weighted manifolds. 

\begin{theorem} \label{thm:secondcheegerd}
Let $(M,g,\mu)$ be a weighted manifold. Then
\begin{align}
    \lambda_{2,D} \le -\frac{1}{4} h_{2,D}^2. \label{eq:seconddirichletcheeger}
\end{align}
\end{theorem}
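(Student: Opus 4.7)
The plan is to mimic the Neumann argument of Corollary \ref{thm:firstcheegerw}, using the Dirichlet variant of Theorem \ref{thm:levelset} together with the fact that a $\lambda_{2,D}$-eigenfunction has at least two nodal domains. Let $u$ be any Dirichlet eigenfunction with eigenvalue $\lambda_{2,D}$.

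First I would verify that $u$ has at least two nodal domains. The first Dirichlet eigenvalue is simple and its eigenfunction $u_1$ has constant sign on $\Int M$ (see e.g.\ \cite[Propositions 4.5.8--4.5.9]{lablee}, as already invoked in the proof of Corollary \ref{thm:firstcheegerw}). Since $u$ is $L^2(M;\mu)$-orthogonal to $u_1$, it must change sign, and so has at least two nodal domains $G_1, G_2 \subset \Int M$.

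Next, apply the Dirichlet case of Theorem \ref{thm:levelset} to $u$ on each of $G_1$ and $G_2$ in turn. This produces, for $i=1,2$, a positive-measure set $S_{G_i} \subset \range(u^2|_{G_i})$ such that every $s \in S_{G_i}$ yields a superlevel set $(G_i)_s \in \mathscr{P}_D(M)$ satisfying $\lambda_{2,D} \le -\frac{1}{4}\mathcal{J}_D((G_i)_s)^2$. Pick any $s_1 \in S_{G_1}$ and $s_2 \in S_{G_2}$. The sets $(G_1)_{s_1}$ and $(G_2)_{s_2}$ are disjoint, since they sit inside the disjoint nodal domains $G_1$ and $G_2$, so $\{(G_1)_{s_1},(G_2)_{s_2}\} \in \mathscr{P}_{2,D}(M)$.

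Finally, the definition \eqref{eq:wcheegerratiod} gives $\mathcal{J}_D(\{(G_1)_{s_1},(G_2)_{s_2}\}) = \max_{i=1,2} \mathcal{J}_D((G_i)_{s_i})$, so the two bounds $\lambda_{2,D} \le -\frac{1}{4}\mathcal{J}_D((G_i)_{s_i})^2$ combine to give
\begin{equation*}
\lambda_{2,D} \le -\frac{1}{4}\mathcal{J}_D(\{(G_1)_{s_1},(G_2)_{s_2}\})^2 \le -\frac{1}{4}h_{2,D}^2,
\end{equation*}
where the last inequality follows because $h_{2,D}$ is the infimum over $\mathscr{P}_{2,D}(M)$ and $x \mapsto -x^2$ is decreasing on $(0,\infty)$. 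The bulk of the work already lives in Theorem \ref{thm:levelset}; the only ingredient specific to this proposition is the sign-change argument above, so I expect no serious obstacle.
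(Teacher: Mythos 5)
Your proof is correct and takes essentially the same route the paper uses. The paper does not spell out a proof for Theorem~\ref{thm:secondcheegerd}: it cites Parini's observation that the classical $k=2$ Neumann Cheeger argument (Cauchy--Schwarz plus the coarea formula applied on the nodal domains of the second eigenfunction) carries over verbatim to the Dirichlet case. Your argument is exactly that, packaged via the paper's own Theorem~\ref{thm:levelset}: you identify that any $\lambda_{2,D}$-eigenfunction must change sign (orthogonality to the signed first eigenfunction), take its two nodal domains, and apply the Dirichlet case of Theorem~\ref{thm:levelset} on each. This is precisely the $k=2$, $r_k=2$ Dirichlet instance of the paper's Theorem~\ref{thm:cheeger}, so nothing new is needed and no gap is present; if anything your version is slightly stronger than the bare inequality, since it inherits the constructive, positive-measure statement about level sets from Theorem~\ref{thm:levelset}.
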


% Introduction to our higher static Cheeger inequality
{Parini's result applies uniformly to all Riemannian manifolds, because the second eigenfunction of $\Delta_\mu$ always has two nodal domains (as an immediate corollary of Theorem \ref{thm:courant}). This uniform bound} does not generalise directly to higher $k$, since the eigenfunctions corresponding to $\lambda_{k,N}$ or $\lambda_{k,D}$ can sometimes have {far fewer than $k$} nodal domains. Indeed, for any boundaryless $n \ge 3$-dimensional manifold and any $k \ge 1$, there is a metric $g$ on $M$ such that the second eigenspace is $k$-dimensional \cite[p.254]{colindeverdiere}, and hence $\lambda_{k+1,\emptyset}=\lambda_{2,\emptyset}$. % In the Dirichlet case, there are eigenfunctions corresponding to $\lambda_{k,D}$ for arbitrarily large $k$ (\cite{stern}, cited in \cite[Theorem 4.1]{berard-helffer-dirichlet}), although the same construction is conjectured not to hold for the square under Neumann boundary conditions. 

Madafiglio's (unpublished) Honours thesis \cite{madafiglio} provides a generalisation of Theorem \ref{thm:secondcheegerd}. 
%(e.g.\ \cite{brantz-korotov-krizek}).}
%or Laplacian eigenmaps \cite{belkin-niyogi-06}.
Madafiglio observes that if some eigenfunction with eigenvalue $\lambda_{k,D}$ has $r_k \ge 2$ nodal domains, then $\lambda_{k,D}$ gives an upper bound on $h_{r_k,D}${, applying similar reasoning as in the proofs of \cite[Theorem 5.4]{parini2}---and the classical works in Theorem \ref{thm:firstcheeger}---to the nodal domains of higher eigenfunctions}. The Neumann case follows by similar reasoning. Using Theorem \ref{thm:levelset}, we can obtain a constructive version of Madafigilio's result. 
%\blue{The general approach of extracting information about level sets using the coarea inequality, to relate Dirichlet energies to a manifold's geometry, appears in several other places e.g.\ \cite{grigor'yan-netrusov-yau,klartag,talenti,matei,lefton-wei}, in addition to the \cite{parini2} and the papers cited in Theorem \ref{thm:firstcheeger} -- we use these techniques to obtain novel geometric insights about the higher Cheeger constants and packings with small higher Cheeger ratio.}

% Higher static Cheeger inequality
\begin{theorem}[Higher Cheeger inequality] \label{thm:cheeger}
Let $(M,g,\mu)$ be a weighted manifold. For each $k \ge 1$, let $r_k$ denote the number of nodal domains in any Neumann (resp.\ Dirichlet) eigenfunction $u$ of $\Delta_\mu$ with eigenvalue $\lambda{\le} \lambda_{k,N}$ (resp.\ $\lambda {\le} \lambda_{k,D}$).  
\begin{enumerate}
\item We have (\eqref{eq:highercheegerstaticd} due to \cite{madafiglio})
\begin{align}
    \lambda_{k,N} &\le -\frac{1}{4} h_{r_k,N}^2, \label{eq:highercheegerstatic} \\
    \lambda_{k,D} &\le -\frac{1}{4} h_{r_k,D}^2. \label{eq:highercheegerstaticd}
\end{align}
\item Let $u$ be any Neumann (resp.\ Dirichlet) eigenfunction of $\Delta_\mu$ with $r_k$ nodal domains, and let $G^1,\ldots,G^{r_k} \subset M$ denote the nodal domains of $u$. For each $i$ and each $s \in \range(u^2|_{G^i})$, let $G^i_s$ denote the $s$-superlevel set of $u^2$ on $G^i$, and define $S_{G^i}$ as in \eqref{eq:Sdef} or \eqref{eq:Sdefd}. Then {each $S_{G^i}$ has positive Lebesgue measure satisfying \eqref{eq:Smeasure}}, and for each $\{s_1,\ldots,s_{r_k}\} \in S_{G^1} \times \ldots \times S_{G^{r_k}}$, the collection $\mathcal{A}_{r_k}:=\{G_{s_1}^1,\ldots,G_{s_{r_k}}^{r_k}\}$ is a Neumann (resp.\ Dirichlet) $r_k$-packing of $M$ satisfying $\lambda_{k,N} \le -\frac{1}{4} \mathcal{J}_N(\mathcal{A}_{r_k})^2$ (resp.\ $\lambda_{k,D} \le -\frac{1}{4} \mathcal{J}_D(\mathcal{A}_{r_k})^2$). 
\end{enumerate}
\end{theorem}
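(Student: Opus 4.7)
The plan is to apply Theorem~\ref{thm:levelset} simultaneously to every nodal domain of the chosen eigenfunction $u$. Fix a Neumann eigenfunction $u$ of $\Delta_\mu$ with eigenvalue $\lambda \ge \lambda_{k,N}$ and nodal domains $G^1,\ldots,G^{r_k}$ (the Dirichlet case is identical). Applying Theorem~\ref{thm:levelset} separately to each $G^i$ yields that $\Leb(S_{G^i}) > 0$ for every $i$. By Fubini--Tonelli, the product $S_{G^1} \times \ldots \times S_{G^{r_k}}$ then has positive Lebesgue measure in $\mathbb{R}^{r_k}$.

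Next, I would fix any tuple $(s_1,\ldots,s_{r_k}) \in S_{G^1} \times \ldots \times S_{G^{r_k}}$ and verify that $\mathcal{A}_{r_k} := \{G^1_{s_1},\ldots,G^{r_k}_{s_{r_k}}\}$ is a Neumann $r_k$-packing. Each $G^i_{s_i}$ lies in $\mathscr{P}_N(M)$ by the very definition \eqref{eq:Sdef} of $S_{G^i}$. Pairwise disjointness is immediate, since $G^i_{s_i} \subseteq G^i$ and distinct nodal domains are pairwise disjoint, being distinct connected components of the open set $\{u \ne 0\}$. To bound the Cheeger ratio of the whole packing, I would use the pointwise bound $\lambda \le -\frac{1}{4}\mathcal{J}_N(G^i_{s_i})^2$ supplied by Theorem~\ref{thm:levelset} for every $i$, which rearranges to $\mathcal{J}_N(G^i_{s_i}) \le 2\sqrt{-\lambda}$. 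The definition \eqref{eq:wcheegerratio} of the Cheeger ratio of a packing as the maximum over constituents then gives $\mathcal{J}_N(\mathcal{A}_{r_k}) = \max_i \mathcal{J}_N(G^i_{s_i}) \le 2\sqrt{-\lambda}$, equivalently $\lambda \le -\frac{1}{4}\mathcal{J}_N(\mathcal{A}_{r_k})^2$, establishing the constructive statement in part~(2).

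To deduce part~(1), I would observe that $\mathcal{A}_{r_k} \in \mathscr{P}_{r_k,N}(M)$, so taking the infimum of \eqref{eq:defcheegnk} over all $r_k$-packings yields $h_{r_k,N} \le \mathcal{J}_N(\mathcal{A}_{r_k})$. Combining this with the previous bound and the hypothesis $\lambda \ge \lambda_{k,N}$ produces $\lambda_{k,N} \le \lambda \le -\frac{1}{4} h_{r_k,N}^2$, which is \eqref{eq:highercheegerstatic}. The Dirichlet inequality \eqref{eq:highercheegerstaticd} follows by an identical argument, using $\mathscr{P}_D$, $\mathcal{J}_D$, and the definition \eqref{eq:Sdefd} of $S_{G^i}$ throughout.

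\textbf{Main obstacle.} The delicate analytic work -- the coarea formula, Cauchy--Schwarz, and Green's formula manipulations, together with the positive-measure conclusion -- is already packaged inside Theorem~\ref{thm:levelset}, so the present argument is essentially a combinatorial lift from one nodal domain to $r_k$ of them. The only care point is to apply Theorem~\ref{thm:levelset} to the \emph{same} eigenfunction $u$ across all nodal domains, so that the common eigenvalue $\lambda$ -- and hence the uniform upper bound $2\sqrt{-\lambda}$ on the individual ratios $\mathcal{J}_N(G^i_{s_i})$ -- is shared across every piece of the packing, allowing the $\max$ in the packing Cheeger ratio to be absorbed cleanly. Notably, Courant's nodal domain theorem is not invoked, since no bound on $r_k$ is asserted.
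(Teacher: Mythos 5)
Your proof is correct and follows the same route as the paper: invoke Theorem~\ref{thm:levelset} on each nodal domain of the chosen eigenfunction to get the positive-measure sets $S_{G^i}$, observe that disjointness of nodal domains makes $\mathcal{A}_{r_k}$ an $r_k$-packing, use the shared eigenvalue $\lambda$ to control the maximum of the per-domain Cheeger ratios, and then pass to the infimum for part~(1). The paper's proof is more terse but does exactly this; your note that Courant's nodal domain theorem is not invoked here (it only becomes relevant if one wants $r_k \le k$) is also accurate.
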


% Proof of static higher Cheeger inequality
\begin{proof}
The sets $G_{s_1}^1,\ldots,G_{s_{r_k}}^{r_k}$ for each $\{s_1,\ldots,s_{r_k}\} \in S_{G^1},\ldots,S_{G^{r_k}}$ are pairwise disjoint, since $G^1,\ldots,G^{r_k}$ are pairwise disjoint, and each $G_{s_i}^i \in \mathscr{P}_N(M)$ (resp.\ $G_{s_i}^i \in \mathscr{P}_D(M)$) by the definitions \eqref{eq:Sdef}--\eqref{eq:Sdefd}. Hence $\mathcal{A}_{r_k}:=\{G^1_{s_1},\ldots,G^{r_k}_{s_{r_k}}\}$ is a Neumann $r_k$-packing for $M$ satisfying $\lambda \le -\frac{1}{4} \mathcal{J}_N(\mathcal{A}_{r_k})^2$ (resp.\ a Dirichlet $r_k$-packing for $M$ satisfying $\lambda \le -\frac{1}{4} \mathcal{J}_D(\mathcal{A}_{r_k})^2$), and \eqref{eq:highercheegerstatic} (resp.\ \eqref{eq:highercheegerstaticd}) follows immediately. 
% End proof
\end{proof}

% Index-reversed statement of static higher Cheeger inequality
We can rewrite part 1 of Theorem \ref{thm:cheeger} as follows: for $k\ge 1$, let $\tilde{r}_k$ be the index of a Neumann (resp.\ Dirichlet) eigenfunction of $\Delta_\mu$ with $\ge k$ nodal domains, when the eigenfunctions are ordered by decreasing eigenvalue. Then 
\begin{align}
    \lambda_{\tilde{r}_k,N} \le -\frac{1}{4} h_{k,N}^2
\end{align}
and
\begin{align}
    \lambda_{\tilde{r}_k,D} \le -\frac{1}{4} h_{k,D}^2,
\end{align}
respectively. We can rewrite equations \eqref{eq:dcheegerr}--\eqref{eq:dcheegerrd} similarly. 

% Comparison
Theorem \ref{thm:cheeger} is intended for situations where an eigenfunction of $\Delta_\mu$ has been calculated explicitly, so that the number of nodal domains can be identified.
{The leading eigenfunctions of Laplacians on compact submanifolds of $\mathbb{R}^n$ can be well approximated with numerical schemes underpinned by convergence theory, such as finite element methods, spectral methods, hpFEM, and so on.}
{When the number of nodal domains is known,} Theorem \ref{thm:cheeger} has the twin advantages that it applies to manifolds with boundary, and that the constant in \eqref{eq:highercheegerstatic} is explicit and small. This allows relatively tight bounds on $h_{k,N}$ or $h_{k,D}$ to be computed be computed even for large $k$, particularly when $\tilde{r}_k$ is close to $k$. {Asymptotically for large $k$, Pleijel's nodal domain theorem \cite{berard-meyer,lena,de-ponti-farinelli-violo} implies that $r_k$ cannot exceed a dimension-dependent fraction of $k$, strictly less than $k$, across a wide range of spaces. One can still obtain useful asymptotic results from Theorem \ref{thm:cheeger}, as described in Section \ref{sec:cheeger-examples}.}
%, as well as 
%but Theorem \ref{thm:cheeger} may be of more interest in the pre-asymptotic, 
%the finite-$k$ case where we numerically estimate specific numbers of nodal domains.}

\subsection{Creating more nodal domains using linear combinations of eigenfunctions}

% SEBA-based Cheeger - introduction
Theorem \ref{thm:cheeger} only allows us to obtain one feature from each of the nodal domains of a single eigenfunction of $\Delta_\mu$. Sometimes, there are $l \le k$ features of interest which appear spread among the first $k$ eigenfunctions, but no single eigenfunction has all $l$ features appearing in separate nodal domains. One may be able to extract these $l$ features and obtain a corresponding bound on $h_{l,N}$ or $h_{l,D}$, by applying an operator known as \emph{soft thresholding} to certain linear combinations of the first $k$ eigenfunctions. 
Soft thresholding with parameter $a>0$ is the map $\tau_a:C^0(M) \to C^0(M)$, $\tau_a(f)(p):=\sign(f(p))\max\{|f(p)|-a,0\}$. Soft thresholding does not increase $W^{1,2}$-norm, and is support-decreasing, in the sense that if $f^{-1}(0) \not\in \{\emptyset,M\}$, then $\supp(\tau_a(f)) \subsetneq \supp(f)$. For some manifolds, there are parameters $\alpha:=\{\alpha_{ij}:1 \le i \le l,1 \le j \le k\}$ for which the $l$ linear combinations $f_{i;\alpha}:=\sum_{j=1}^k \alpha_{ij} u_j$, $i=1,\ldots,l$ of the first $k$ (Neumann or Dirichlet) eigenfunctions of $\Delta_\mu$ are $L^2$-close to a collection of $l$ functions with pairwise disjoint supports \cite[Theorem 19]{davies}. When the eigenfunctions can be computed or approximated explicitly, the parameters $\alpha$ can be chosen using an algorithm such as \emph{sparse eigenbasis approximation} \cite{FRS19}, as discussed after the proof of Proposition \ref{thm:seba}. Each $f_{i;\alpha}$ has support covering all of $M$, as a consequence of the unique continuation theorem \cite{aronszajn}%
\footnote{The function $f_{i,\alpha}$ satisfies $|\Delta_\mu f_{i,\alpha}| \le |\lambda_{k,N}| |f_{i,\alpha}|$ or $|\Delta_\mu f_{i,\alpha}|\le |\lambda_{k,D}| |f_{i,\alpha}|$, so the main theorem of \cite{aronszajn} implies that if $f_{i,\alpha}$ cannot be zero in an open neighbourhood unless it is zero everywhere.}, %
but the thresholded functions $\tau_a(f_{1,\alpha}),\ldots,\tau_a(f_{l,\alpha})$ may have pairwise disjoint supports. Increasing $a$ decreases the supports of $\tau_a(f_{1,\alpha}),\ldots,\tau_a(f_{l,\alpha})$, so one chooses $a$ as small as required to achieve pairwise disjoint supports for $\tau_a(f_{1,\alpha}),\ldots,\tau_a(f_{l,\alpha})$. 
In Proposition \ref{thm:seba} below, we give upper bounds on $h_{l,N}$ or $h_{l,D}$, and prove that some of the level sets of $\tau_a(f_{1,\alpha}),\ldots,\tau_a(f_{l,\alpha})$ yield Cheeger $l$-packings whose Cheeger ratios are bounded above, in terms of $\lambda_{k,N}$ or $\lambda_{k,D}$ and the proportion of mass lost in the thresholding step. 
We illustrate Proposition \ref{thm:seba} in example \ref{thm:example}. 

%%%%%
% SEBA-based Cheeger inequality
%%%%%
\begin{proposition} \label{thm:seba}
For any weighted manifold $(M,g,\mu)$, let $u_1,\ldots,u_k$ denote the first $k$ Neumann, resp.\ Dirichlet, eigenfunctions of $\Delta_\mu$ on $M$ for $k \ge 1$. For any $1 \le l \le k$ and any $\alpha \in \mathbb{R}^{l\times k}$, define $f_{1,\alpha},\ldots,f_{l,\alpha}$ by $f_{i,\alpha}:=\sum_{j=1}^k \alpha_{ij}u_j$. Suppose that for some $a>0$, the functions $\tau_a(f_{1,\alpha}),\ldots,\tau_a(f_{l,\alpha})$ are nonzero and have pairwise disjoint supports. 
Then each $\tau_a(f_{i,\alpha})$ has a nodal domain $\tilde{G}^i$ such that letting $\tilde{G}^i_s$ for $s \in \range(\tau_a(f_{i,\alpha})^2|_{\tilde{G}^i})$ denote the $s$-superlevel set of $\tau_a(f_{i,\alpha})^2$ on $\tilde{G}_i$, the set 
\begin{align}\tilde{S}_{\tilde{G}^i} := 
\Bigl\{ s \in \range(\tau_a(f_{i,\alpha})^2|_{\tilde{G}^i}) : \tilde{G}^i_s \in \mathscr{P}_N(M), 
\frac{\||\nabla \tau_a(f_{i,\alpha})|\|_{L^2(\tilde{G}^i;\mu)}^2}{\|\tau_a(f_{i,\alpha})\|_{L^2(\tilde{G}^i;\mu)}^2} \ge \frac{1}{4}\mathcal{J}_N(\tilde{G}^i_s)^2 \Bigr\}, 
\end{align}
resp.\ 
\begin{align} 
\tilde{S}_{\tilde{G}^i} := 
\Bigl\{ s \in \range(\tau_a(f_{i,\alpha})^2|_{\tilde{G}^i}) : \tilde{G}^i_s \in \mathscr{P}_D(M),
\frac{\| |\nabla \tau_a(f_{i,\alpha})|\|_{L^2(\tilde{G}^i;\mu)}^2}{\|\tau_a(f_{i,\alpha})\|_{L^2(\tilde{G}^i;\mu)}^2} \ge \frac{1}{4}\mathcal{J}_D(\tilde{G}^i_s)^2 \Bigr\},
\end{align} 
has positive measure satisfying {
\begin{align}
    \Leb(\tilde{S}_{\tilde{G}^i}) \ge \frac {\mleft\| \overline{\tilde{h}_i} - \tilde{\mathrm{h}}_i \mright\|_{L^1\mleft(\range\mleft(\tau_a(f_{i,\alpha})^2|_{\tilde{G}^i}\mright);\tilde{\mathbb{P}}_i\mright)} \|\tau_a(f_{i,\alpha})\|_{L^2(\tilde{G}^i;\mu)}^2}
    {2 \mleft(\overline{\tilde{h}_i} - \inf_{s \in \smash{\range\mleft(\tau_a(f_{i,\alpha})^2|_{\tilde{G}^i}\mright)}} \tilde{\mathrm{h}}_i(s)\mright) \mu(\tilde{G}^i)}, \label{eq:seba-Smeasure}
\end{align}
where $\overline{\tilde{h}_i}$, $\tilde{\mathrm{h}}_i$ and $\tilde{\mathbb{P}}_i$ are defined in the proof below}. Moreover, for each $\{s_1,\ldots,s_l\} \in \tilde{S}_{\tilde{G}^1} \times \ldots \times \tilde{S}_{\tilde{G}^l}$, the collection $\mathcal{A}_l:=\{\tilde{G}_{s_1}^1,\ldots,\tilde{G}_{s_l}^l\}$ is a Neumann $l$-packing for $M$ satisfying 
\begin{align} 
    \lambda_{k,N} 
    \le -\frac{1}{4} \mathcal{J}_N(\mathcal{A}_l)^2 \max_{1 \le j \le l} \frac{\|\tau_a(f_{j,\alpha})\|_{L^2(M;\mu)}^2}{\|f_{j,\alpha}\|_{L^2(M;\mu)}^2}
    \le -\frac{1}{4} h_{l,N}^2 \max_{1 \le j \le l} \frac{\|\tau_a(f_{j,\alpha})\|_{L^2(M;\mu)}^2}{\|f_{j,\alpha}\|_{L^2(M;\mu)}^2}, % \label{eq:sebacheegerj}
    \label{eq:sebacheeger}
\end{align}
resp.\ a Dirichlet $l$-packing for $M$ satisfying 
\begin{align}
    \lambda_{k,D} 
    \le -\frac{1}{4} \mathcal{J}_D(\mathcal{A}_l)^2 \max_{1 \le j \le l} \frac{\|\tau_a(f_{j,\alpha})\|_{L^2(M;\mu)}^2}{\|f_{j,\alpha}\|_{L^2(M;\mu)}^2} % \label{eq:sebacheegerjd}
    \le -\frac{1}{4} h_{l,D}^2 \max_{1 \le j \le l} \frac{\|\tau_a(f_{j,\alpha})\|_{L^2(M;\mu)}^2}{\|f_{j,\alpha}\|_{L^2(M;\mu)}^2}. \label{eq:sebacheegerd}
\end{align}
\end{proposition}

% SEBA inequality figure
\begin{figure}[!hbt]
\centering
\begin{subfigure}[t]{0.4\textwidth}
\includegraphics{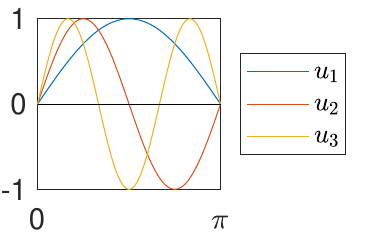}
\caption{Three leading eigenfunctions of $\Delta$ on $[0,\pi]$} \label{fig:interval-orig}
\end{subfigure}
\quad
\begin{subfigure}[t]{0.52\textwidth}
\hspace{1em}\includegraphics{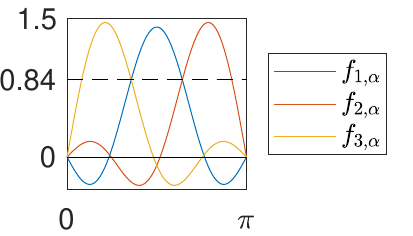}
\caption{Three linear combinations $f_{1,\alpha},f_{2,\alpha},f_{3,\alpha}$ of the eigenfunctions. The regions where each function takes values $>0.84$ are pairwise disjoint.} \label{fig:interval-efuncs}
\end{subfigure}

% SEBA inequality thresholded figures
\hspace*{-1cm}
\begin{subfigure}[t]{0.46\textwidth}
\includegraphics{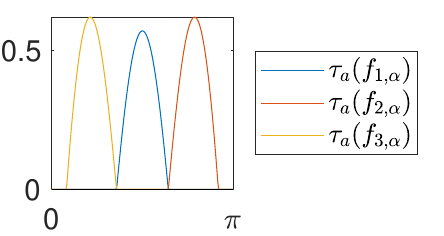}
\caption{The three pairwise disjointly supported functions $\tau_a(f_{1,\alpha}),\tau_a(f_{2,\alpha}),\tau_a(f_{3,\alpha})$, with $a:=0.84$ obtained by soft thresholding.} \label{fig:interval-sparse}
\end{subfigure}%
\nolinebreak\quad% 
\begin{subfigure}[t]{0.63\textwidth}
\includegraphics{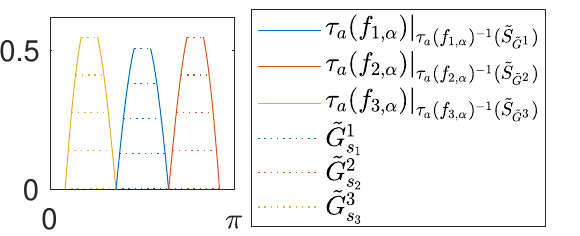}
\caption{Solid lines: restrictions of each $\tau_a(f_{i,\alpha})$ to the set $\tau_a(f_{i,\alpha})^{-1}(\tilde{S}_{\tilde{G}^i})$. Dotted lines: for $i=1,2,3$, each dotted line is a superlevel set $\tilde{G}^i_{s_i}$ of $\tau_a(f_{i,\alpha})$ for five choices of $s_i \in \tilde{S}_{\tilde{G}^i}$. }
\label{fig:interval-packing}
\end{subfigure}
%\hspace*{-1.75cm}

% SEBA inequality figure caption
\caption{Eigenfunctions \subref{fig:interval-orig}, linear combinations \subref{fig:interval-efuncs}, thresholded functions \subref{fig:interval-sparse} and superlevel sets for levels in $\tilde{S}_{\tilde{G}^i}$ \subref{fig:interval-packing}, for the first three eigenfunctions of $\Delta$ on $[0,\pi]$.}
\end{figure}

% SEBA inequality example
\begin{example} \label{thm:example}
Let $(M,g,\mu)$ denote the interval $[0,\pi]$ equipped with Euclidean distance and Lebesgue measure $\mathrm{Leb}^{{1}}$, and let $u_1,u_2,u_3$ denote the first three Dirichlet eigenfunctions of $\Delta$ on $[0,\pi]$ (shown in Figure \ref{fig:interval-orig}). Using sparse eigenbasis approximation \cite[Algorithm 3.1]{FRS19}, we take $\alpha:=\begin{psmallmatrix}0.77&0&-0.64\\0.45&-0.71&0.54\\0.45&0.71&0.54\end{psmallmatrix}$. Then the linear combinations $f_{1,\alpha}:=\sum_{j=1}^3 \alpha_{ij} u_j$, $j=1,2,3$ of $u_1,u_2,u_3$ (shown in Figure \ref{fig:interval-efuncs}) are $L^2$-close to disjointly supported functions. Applying soft thresholding $\tau_a$ with $a:=0.84$ yields pairwise disjointly supported functions $\tau_a(f_{1,\alpha}),\ldots,\tau_a(f_{3,\alpha})$ (shown in Figure \ref{fig:interval-sparse}). 

 Each $\tau_a(f_{i,\alpha})$ has a single nodal domain $\tilde{G}^i$, and the corresponding positive-measure intervals $\tilde{S}_{\tilde{G}^i}$ are given by $\tilde{S}_{\tilde{G}^1}\approx(0,0.51]$, $\tilde{S}_{\tilde{G}^2}=\tilde{S}_{\tilde{G}^3}\approx (0,0.55]$ (to two decimal places). We show some of the sets $\tilde{G}^i_{s_i}$ for $s_i \in \tilde{S}_{\tilde{G}^i}$, $i=1,2,3$, in Figure \ref{fig:interval-packing}. Proposition \ref{thm:seba} guarantees that each $\tilde{S}_{\tilde{G}^i}$ has positive measure, and that each $\mathcal{A}_3:=\{\tilde{G}^1_{s_1},\tilde{G}^2_{s_2},\tilde{G}^3_{s_3}\}$ for $\{s_1,s_2,s_3\} \in \tilde{S}_{\tilde{G}^1}\times \tilde{S}_{\tilde{G}^2}\times \tilde{S}_{\tilde{G}^3}$ satisfies $\mathcal{J}_D(\mathcal{A}_3) \le 2\sqrt{-\lambda_{3,D}}\frac{\|f_{3,\alpha}\|_{L^2([0,\pi],\mathrm{Leb}^{{1}})}}{\|\tau_a(f_{3,\alpha})\|_{L^2([0,\pi],\mathrm{Leb}^{{1}})}}=7.3$. Some choices for $\{s_1,s_2,s_3\}$ give rise to packings $\mathcal{A}_3$ with Cheeger ratios significantly smaller than this upper bound. Note that for this example, $u_3$ already has 3 nodal domains, so we could use Theorem \ref{thm:cheeger} to obtain a 3-packing instead. 
\end{example}

% Proof of SEBA-based Cheeger inequality
\begin{proof}
We consider only the Neumann case; the proof of the Dirichlet case is similar. For each $1 \le i \le l$, let $\tilde{G}^i:=\argmin_{\tilde{G}} \frac{\| |\nabla \tau_a(f_{i,\alpha})| \|_{L^2(\tilde{G};\mu)}^2}{\|\tau_a(f_{i,\alpha})\|_{L^2(\tilde{G};\mu)}^2}$, where the minimum is taken over nodal domains $\tilde{G}$ of $\tau_a(f_{i,\alpha})$. 
The level sets of $\tau_a(f_{i,\alpha})$, other than $(\tau_a(f_{i,\alpha}))^{-1}(0)$, are level sets of $f_{i,\alpha} \in C^\infty(M)$, so $\tilde{G}^i_{s_i} \in \mathscr{P}_N(M)$ for almost every $s \in \range(\tau_a(f_{i,\alpha})^2|_{\tilde{G}^i})$ by the reasoning after \eqref{eq:coarea}. 
By applying the reasoning from Theorem \ref{thm:levelset} (\eqref{eq:coarea}--\eqref{eq:rayleigh-h} and after \eqref{eq:cheegerbarbound}) to $\tau_a(f_{i,\alpha})$ on $\tilde{G}^i$, it follows immediately that $\tilde{S}_{\tilde{G}^i}$ has positive measure satisfying \eqref{eq:seba-Smeasure} below, and that $\{\tilde{G}^1_{s_1},\ldots,\tilde{G}^l_{s_l}\} \in \mathscr{P}_{l,N}(M)$ for each $\{s_1,\ldots,s_l\} \in \tilde{S}_{\tilde{G}^1} \times \ldots \times \tilde{S}_{\tilde{G}^l}$. 

We now proceed to prove \eqref{eq:sebacheeger}. Choose any $i \in \{1,\ldots,l\}$. 
Note that for each nodal domain $\tilde{G}$ of $\tau_a(f_{i,\alpha})$, we have $\| |\nabla \tau_a(f_{i,\alpha})| \|_{L^2(\tilde{G};\mu)}^2 \ge \|\tau_a(f_{i,\alpha})\|_{L^2(\tilde{G};\mu)}^2 \frac{\||\nabla \tau_a(f_{i,\alpha})|\|_{L^2(\tilde{G}^i;\mu)}^2}{\|\tau_a(f_{i,\alpha})\|_{L^2(\tilde{G}^i;\mu)}^2}$. Hence 
\begin{align}
    \frac{\||\nabla \tau_a(f_{i,\alpha})|\|_{L^2(M;\mu)}^2}{\|\tau_a(f_{i,\alpha})\|_{L^2(M;\mu)}^2} 
    &= \frac{\sum_{\tilde{G}} \||\nabla \tau_a(f_{i,\alpha})|\|_{L^2(\tilde{G};\mu)}^2}{\sum_{\tilde{G}} \| \tau_a(f_{i,\alpha})\|_{L^2(\tilde{G};\mu)}^2} 
    \ge \frac{\||\nabla \tau_a(f_{i,\alpha})|\|_{L^2(\tilde{G}^i;\mu)}^2}{\|\tau_a(f_{i,\alpha})\|_{L^2(\tilde{G}^i;\mu)}^2}. \label{eq:rayleigh-h-seba}
\end{align}
Recalling that $f_{i,\alpha}=\sum_{j=1}^k \alpha_{ij} u_j$, we have that $\lambda_{k,N} \le -\frac{\||\nabla f_{i,\alpha}|\|_{L^2(M;\mu)}^2}{\|f_{i,\alpha}\|_{L^2(M;\mu)}^2}$ (by e.g.\ \cite[first equation of Proposition 4.5.4]{lablee}, which extends directly to the weighted case). Hence, since $\||\nabla \tau_a(f_{i,\alpha})|\|_{L^2(M)} \le \||\nabla f_{i,\alpha}|\|_{L^2(M)}$, \eqref{eq:rayleigh-h-seba} implies 
\begin{align}
    \lambda_{k,N} 
    \le -\frac{\||\nabla f_{i,\alpha}|\|_{L^2(M;\mu)}^2}{\|f_{i,\alpha}\|_{L^2(M;\mu)}^2}
    &\le -\frac{\||\nabla \tau_a(f_{i,\alpha})|\|_{L^2(M;\mu)}^2}{\|\tau_a(f_{i,\alpha})\|_{L^2(M;\mu)}^2} \frac{\|\tau_a(f_{i,\alpha})\|_{L^2(M;\mu)}^2}{\|f_{i,\alpha}\|_{L^2(M;\mu)}^2} \nonumber \\
    &\le -\frac{\||\nabla \tau_a(f_{i,\alpha})|\|_{L^2(\tilde{G}^i;\mu)}^2}{\|\tau_a(f_{i,\alpha})\|_{L^2(\tilde{G}^i;\mu)}^2} \frac{\| \tau_a(f_{i,\alpha})\|_{L^2(M;\mu)}^2}{\| f_{i,\alpha} \|_{L^2(M;\mu)}^2}. \label{eq:rayleigh-lambda-seba}
\end{align}
Hence, by the definition of $\tilde{S}_{\tilde{G}^i}$ in the proposition statement, for each $s_i \in \tilde{S}_{\tilde{G}^i}$, we have 
\begin{align*}
    \lambda_{k,N} 
    \le -\frac{1}{4} \mathcal{J}_N(\tilde{G}^i_{s_i})^2 \frac{\|\tau_a(f_{i,\alpha})\|_{L^2(M;\mu)}^2}{\|f_{i,\alpha}\|_{L^2(M;\mu)}^2}. 
\end{align*}
Applying this reasoning for each $i \in \{1,\ldots,k\}$ and recalling that $\tau_a(f_{1,\alpha}),\ldots,\tau_a(f_{l,\alpha})$ have pairwise disjoint supports yields \eqref{eq:sebacheeger}. 

Lastly, we state our lower bound on the measure of each $\tilde{S}_{\tilde{G}^i}$. 
Similarly to Theorem \ref{thm:levelset}, we define $\overline{\tilde{h}_i}:=\frac{1}{\|\tau_a(f_{i,\alpha})\|_{L^2(\tilde{G}^i)^2}} \int_{\range\mleft(\tau_a(f_{i,\alpha})^2|_{\tilde{G}^i}\mright)} \mathcal{J}_N(\tilde{G}_s^i) \mu(\tilde{G}_s^i) \dd s$ and $\tilde{\mathrm{h}}_i(s):=\mathcal{J}_N(\tilde{G}_s^i)$, and we define the probability measure $\tilde{\mathbb{P}}_i$ on $\range(\tau_a(f_{i,\alpha})^2|_{\tilde{G}^i}) \subset \mathbb{R}$ by $\tilde{\mathbb{P}}_i(L):=\int_L \frac{\mu(\tilde{G}_s^i)}{\|\tau_a(f_{i,\alpha})\|_{L^2(\tilde{G}^i;\mu)}^2} \dd s$. Then the reasoning for \eqref{eq:Smeasure} implies {\eqref{eq:seba-Smeasure}}.
A similar result applies in the Dirichlet case, replacing $\mathcal{J}_N$ with $\mathcal{J}_D$ in the definitions of $\overline{\tilde{h}_i},\tilde{\mathrm{h}}_i,\tilde{\mathbb{P}}_i$. 
% End proof
\end{proof}

In numerical calculations, the $\alpha_{ij}$ in Proposition \ref{thm:seba} can be readily computed using the sparse eigenbasis approximation algorithm of \cite[Algorithm 3.1]{FRS19}. The orthogonal matrix $R$ produced by that algorithm can be used as the matrix $\alpha$. The resulting $f_{1,\alpha},\ldots,f_{k,\alpha}$ form an orthogonal basis for $\Span\{u_1,\ldots,u_k\}$, such that for some fixed $a>0$, each $\tau_a(f_{i,\alpha})$ (defined before Proposition \ref{thm:seba}) is \emph{sparse}, i.e.\ each $\supp(\tau_a(f_{i,\alpha}))$ is small. Using a larger $a' \ge a$ will create further support reductions.

%%%%%%%%%%%%%%%%%%%%%%%%%%%%%%%%%%%%%%%%%%%%%%%%%
%%%%%%%%%%%%%%%%%%%%%%%%%%%%%%%%%%%%%%%%%%%%%%%%%
%%%%%%%%%%%%%%%%%%%%%%%%%%%%%%%%%%%%%%%%%%%%%%%%%
%%%%%%%%%%%%%%%%%%%%%%%%%%%%%%%%%%%%%%%%%%%%%%%%%
%%%%%%%%%%%%%%%%%%%%%%%%%%%%%%%%%%%%%%%%%%%%%%%%%
%%%%%%%%%%%%%%%%%%%%%%%%%%%%%%%%%%%%%%%%%%%%%%%%%
%%%%%%%%%%%%%%%%%%%%%%%%%%%%%%%%%%%%%%%%%%%%%%%%%
%%%%%%%%%%%%%%%%%%%%%%%%%%%%%%%%%%%%%%%%%%%%%%%%%
%%%%%%%%%%%%%%%%%%%%%%%%%%%%%%%%%%%%%%%%%%%%%%%%%
%%%%%%%%%%%%%%%%%%%%%%%%%%%%%%%%%%%%%%%%%%%%%%%%%
%%%%%%%%%%%%%%%%%%%%%%%%%%%%%%%%%%%%%%%%%%%%%%%%%
%%%%%%%%%%%%%%%%%%%%%%%%%%%%%%%%%%%%%%%%%%%%%%%%%
%%%%%%%%%%%%%%%%%%%%%%%%%%%%%%%%%%%%%%%%%%%%%%%%%
%%%%%%%%%%%%%%%%%%%%%%%%%%%%%%%%%%%%%%%%%%%%%%%%%
%%%%%%%%%%%%%%%%%%%%%%%%%%%%%%%%%%%%%%%%%%%%%%%%%
%%%%%%%%%%%%%%%%%%%%%%%%%%%%%%%%%%%%%%%%%%%%%%%%%
%%%%%%%%%%%%%%%%%%%%%%%%%%%%%%%%%%%%%%%%%%%%%%%%%
%%%%%%%%%%%%%%%%%%%%%%%%%%%%%%%%%%%%%%%%%%%%%%%%%
%%%%%%%%%%%%%%%%%%%%%%%%%%%%%%%%%%%%%%%%%%%%%%%%%
%%%%%%%%%%%%%%%%%%%%%%%%%%%%%%%%%%%%%%%%%%%%%%%%%
%%%%%%%%%%%%%%%%%%%%%%%%%%%%%%%%%%%%%%%%%%%%%%%%%

% The dynamic setting 
%%%%%%%%%%%
\subsection{The dynamic Cheeger inequalities} \label{sec:dcheeg}
%%%%%%%%%%%

\subsubsection{Preliminaries on higher dynamic Cheeger constants and dynamic Laplacian eigenvalues} \label{sec:introdynamic}
We can generalise Theorems \ref{thm:miclocheeger}--\ref{thm:cheeger} into the setting of non-autonomous, advective dynamical systems. Many fluidic and geophysical flows can be modeled using purely advective dynamics. Such flows can be represented as a collection of time-indexed diffeomorphisms acting on an initial-time manifold, where each diffeomorphism sends a point in the initial-time manifold to its position at the corresponding future time. These diffeomorphisms are physically meaningful, because they describe the fluid motion and evolve subsets of the initial-time manifold according to this motion.

The global behaviour of many fluidic and geophysical flows can be understood by separating the phase space (the physical space containing the fluid) into \emph{coherent sets} \cite{F15}, i.e.\ regions that are ``as dynamically disconnected as possible'' \cite{FJ18}. One approach in purely advective, finite-time nonautonomous systems is to identify subsets of the phase space whose boundary measures remain small over time, relative to the measures of those subsets. These volume ratios are known as \emph{dynamic Cheeger ratios} \cite{F15,FKw17,FJ18}, and sets which locally minimise this ratio are known as \emph{coherent sets}. The infima of these ratios are known as the \emph{dynamic Cheeger constants} \cite{F15,FKw17,FJ18}. 
The dynamic Cheeger constants generalise the (static) Cheeger constants of Definition \ref{def:whcheeg}. 

Calculating a dynamic Cheeger constant exactly is generally impractical. Instead, approximate coherent sets can be obtained from the eigenfunctions of a specific weighted Laplace-Beltrami operator called the \emph{dynamic Laplacian}. There are existing upper bounds on the first non-zero dynamic Cheeger constant in terms of the first non-zero eigenvalue of the dynamic Laplacian \cite{F15,FKw17,FJ18}. 

In practice, the higher eigenfunctions of the dynamic Laplacian reveal additional coherent sets (see e.g.\ \cite{FRS19}). Below, we introduce higher dynamic Cheeger constants, analogous to the (static) higher Cheeger constants of Definition \ref{def:whcheeg}, to quantify these additional coherent sets. We show that the higher dynamic Cheeger constants are bounded above by the eigenvalues of {the dynamic Laplacian} (Theorems \ref{thm:dmiclo}, \ref{thm:dmiclor} and \ref{thm:dcheeger}), and in particular that the eigenfunctions of {the dynamic Laplacian} reveal coherent sets whose dynamic Cheeger ratios are bounded above (Theorems \ref{thm:dcheeger} and \ref{thm:dseba}). 

% Definition of a dynamical system
\begin{definition} \label{def:dynsys}
A \emph{dynamical system} $\mathcal{T}:=(\mathrm{T}, \{(M_t,g_t)\}_{t \in \mathrm{T}}, \{\Phi^{(t)}\}_{t \in \mathrm{T}})$ or $\mathcal{T}:=(\mathrm{T}, \allowbreak \{(M_t,g_t,\mu_t)\}_{t \in \mathrm{T}}, \allowbreak \{\Phi^{(t)}\}_{t \in \mathrm{T}})$ consists of the following:
\begin{itemize}
    \item A time index set $\mathrm{T}:=\{0,1,\ldots, t_{\max}\}$. 
    \item A time-indexed family of Riemannian manifolds $\{(M_t,g_t)\}_{t \in \mathrm{T}}$ or weighted manifolds $\{(M_t,g_t,\mu_t)\}_{t \in \mathrm{T}}$, where in the unweighted case, for $t \in \mathrm{T}$ we take $\mu_t$ to denote Riemannian volume on $M_t$.  
    \item A time-indexed family of $C^\infty$ diffeomorphisms $\{\Phi^{(t)}\}_{t \in \mathrm{T}}$, which are \emph{measure-preserving} in the sense $\mu_t = \mu_0 \circ (\Phi^{(t)})^{-1}$ (we call such $\Phi^{(t)}$ \emph{volume-preserving} if each $\mu_t$ is Riemannian volume). 
\end{itemize}
% In the discrete-time case, suppose we have a collection of diffeomorphisms $\Phi_t : M_{t-1} \rightarrow M_t$ for each $t=1,\ldots,t_{\max}$. Denote $T^{(s)}:=T_s \circ \ldots \circ T_2 \circ T_1$ and $T^{(s)}_t:=T_{t+s} \circ \ldots \circ T_{t+2} \circ T_{t+1}$. 
\end{definition}

% Additional definitions for dynamical systems
We use the following notation. Since $\Phi^{(t)}$ for $t \in \mathrm{T}$ is a measure-preserving diffeomorphism, the \emph{pushforward} $\Phi^{(t)}_*:C^\infty(M_0) \to C^\infty(M_t)$ is given by $\Phi^{(t)}_* f := f \circ (\Phi^{(t)})^{-1}$, and the \emph{pullback} $(\Phi^{(t)})^*:C^\infty(M_t) \to C^\infty(M_0)$ is given by $(\Phi^{(t)})^* f := f \circ \Phi^{(t)}$. We also define the pullback Riemannian metric $(\Phi^{(t)})^*g_t$ given by $(\Phi^{(t)})^*g_t:= g_t(\ddd\Phi^{(t)} \,\cdot \,, \ddd\Phi^{(t)} \,\cdot \,)$, where $\ddd\Phi^{(t)}$ is the differential of $\Phi^{(t)}$ (see e.g.\ \cite[p.55]{lee}). For $t \in \mathrm{T}$, we let $(\mu_t)_{n-1}$ %(or $(\mu_t)_{n-1;g_t}$ when we want to emphasise dependence on the metric) 
denote the $n-1$-dimensional Hausdorff measure on $M_t$ constructed from $\mu_t$ and $g_t$. For $s, s+t \in \mathrm{T}$, we write $\Phi_s^{(t)}:=\Phi^{(s+t)} \circ (\Phi^{(s)})^{-1}$. 

% Higher dynamic Cheeger constants	
We define the higher dynamic Cheeger constants as follows. 
\begin{definition}[Higher dynamic Cheeger constants] \label{def:dcheeg}
Consider a dynamical system $\mathcal{T}$. For $k \ge 1$, the \emph{dynamic Neumann Cheeger ratio} of a $k$-packing $\{A_1,\ldots,A_k\} \in \mathscr{P}_{k,D}(M_0)$ is 
\begin{align}
    \mathcal{J}_N^d(\{A_1,\ldots,A_k\})&:=\max_{1 \le i \le k} \frac{\sum_{t=0}^{t_{\max}} (\mu_t)_{n-1}(\Phi^{(t)}(\partial^{M_0} A_i))}{|\mathrm{T}| \mu_0(A_i)}. \label{eq:dcheegerration} 
\end{align}
The \emph{dynamic Dirichlet Cheeger ratio} of a Dirichlet $k$-packing $\{A_1,\ldots,A_k\} \in \mathscr{P}_{k,D}(M_0)$ is 
\begin{align}
    \mathcal{J}_D^d(\{A_1,\ldots,A_k\})&:=\max_{1 \le i \le k} \frac{\sum_{t=0}^{t_{\max}} (\mu_t)_{n-1}(\Phi^{(t)}(\partial A_i))}{|\mathrm{T}| \mu_0(A_i)}. \label{eq:dcheegerratiod} 
\end{align}
The \emph{$k$th dynamic Neumann} and \emph{dynamic Dirichlet Cheeger constants} for $\mathcal{T}$ are
\begin{align}
    h_{k,N}^d&:=\inf_{\{A_1,\ldots,A_k\} \in \mathscr{P}_k(M_0)} \mathcal{J}_N^d(\{A_1,\ldots,A_k\}) \label{eq:dcheegn} \\
    h_{k,D}^d&:=\inf_{\{A_1,\ldots,A_k\} \in \mathscr{P}_k(M_0)} \mathcal{J}_D^d(\{A_1,\ldots,A_k\}). \label{eq:dcheegd}
\end{align}
For $A \in \mathscr{P}_N(M_0)$, resp.\ $A \in \mathscr{P}_D(M_0)$, we will occasionally write $\mathcal{J}_N^d(A)$ instead of $\mathcal{J}_N^d(\{A\})$, resp.\ $\mathcal{J}_D^d(A)$ instead of $\mathcal{J}_D^d(\{A\})$, for convenience. 
\end{definition}

% Existing dynamic Cheeger constants
The Neumann dynamic Cheeger constant $h_{2,N}^d$ was originally defined requiring $A_1$ and $A_2$ to partition $M_0$ \cite{F15}, whereas $\eqref{eq:dcheegn}$ only requires them to form a packing of $M_0$. This does not change the value of $h_{2,N}^d$, by the reasoning after definition \ref{def:whcheeg}. Note that since the $\Phi^{(t)}$ are measure-preserving, we have $|\mathrm{T}|\mu_0(A_i)=\sum_{t=0}^{t_{\max}} \mu_t(\Phi^{(t)}(A_i))$, i.e.\ the denominators in \eqref{eq:dcheegerration}-\eqref{eq:dcheegerratiod} are $|\mathrm{T}|$ times the time averages of the measures of the $A_i$.  

% Dynamic Laplacian
When considering dynamical systems, we let $\Delta_{g_t,\mu_t}$ denote the weighted Laplace-Beltrami operator on $(M_t,g_t,\mu_t)$. The dynamic Laplacian \cite{F15,FKw17} is 
\begin{align}
    \Delta^d:=\frac{1}{|\mathrm{T}|} \sum_{t=0}^{t_{\max}} (\Phi^{(t)})^* \Delta_{g_t,\mu_t} \Phi_*^{(t)}. \label{eq:defweighteddynamiclaplacian}
\end{align}
We consider Dirichlet and dynamic Neumann eigenproblems for $\Delta^d$. The dynamic Neumann eigenproblem is to find $u \in C^\infty(M_0)$ and $\lambda \in \mathbb{R}$, such that 
\begin{align}
    \Delta^d u = \lambda u, \label{eq:dynlaplacian}
\end{align}
subject to the \emph{dynamic Neumann boundary condition} (if $\partial M_0 \not= \emptyset$)  
\begin{align}
    \frac{1}{|\mathrm{T}|} \sum_{t=0}^{t_{\max}} \frac{\partial }{\partial \mathbf{n}_t}\left( (\Phi^{(t)})_* u \right) &= 0 \quad \text{on } \partial M_0, \label{eq:dneumannbc}
\end{align}
where $\mathbf{n}_t$ denotes an outward unit normal vector to $\partial M_t$ \cite[Theorem 4.1]{F15} \cite[Theorem 4.4]{FKw17}. Dynamic Neumann boundary conditions are the natural boundary condition as discussed in \cite[pp.9--10]{F15} and \cite[p.16]{FKw17}. 
There is an orthogonal Schauder basis for $L^2(M_0,\mu_0)$ consisting of eigenfunctions for \eqref{eq:dynlaplacian} satisfying \eqref{eq:dneumannbc} \cite[Theorem 4.4]{FKw17}. 
The corresponding eigenvalues form a non-positive decreasing sequence accumulating only at $-\infty$, and we denote them $0 = \lambda_{1,N}^d > \lambda_{2,N}^d \ge \lambda_{3,N}^d \ge \ldots$. 

% Dynamic Dirichlet eigenproblem
The Dirichlet eigenproblem is to find $u \in C^\infty(M_0)$ and $\lambda \in \mathbb{R}$ satisfying \eqref{eq:dynlaplacian}, subject to
\begin{align}
    u=0 \quad \text{on } \partial M_0. \label{eq:ddirichletbc}
\end{align}
By standard variational arguments as in e.g.\ \cite[Theorem 4.3.1]{lablee} and elliptic regularity theorems as in \cite[Theorem 8.14]{gilbarg}, there is an orthogonal Schauder basis for $L^2(M_0,\mu_0)$ of $C^\infty(M_0)$ eigenfunctions for \eqref{eq:dynlaplacian} satisfying \eqref{eq:ddirichletbc}. The corresponding eigenvalues form a negative decreasing sequence accumulating only at $-\infty$, and we denote them $0>\lambda_{1,D}^d > \lambda_{2,D}^d \ge \lambda_{3,D}^d \ge \ldots$. 

% Dynamic variational characterisation
We have the following variational formula for the eigenvalues, in the dynamic Neumann setting \cite{FKw17}. 
\begin{proposition} \label{thm:dvariational}
Let $\mathcal{T}$ be a dynamical system, and let $u_1^d,u_2^d,\ldots$ denote a complete orthogonal basis of dynamic Neumann eigenfunctions of $\Delta^d$ corresponding to $\lambda_{1,N}^d,\lambda_{2,N}^d,\ldots$ (resp.\ $\lambda_{1,D}^d,\lambda_{2,D}^d,\ldots$). Then for each $k \ge 1$, we have
\begin{align}
    \lambda_{k,N}^d &= -\inf_{\substack{f \in W^{1,2}(M_0) \\ \int_{M_0} u_i^d f \dd\mu_0=0, \forall i \in \{1,\ldots,k-1\}}} \frac{\sum_{t=0}^{t_{\max}} \| |\nabla_{g_t} \Phi^{(t)}_* f| \|_{L^2(M_i;\mu_i)}^2}{|\mathrm{T}|\| f \|_{L^2(M_0;\mu_0)}^2}, \label{eq:dvariationaln} 
\end{align}
and the infimum is attained when $f$ is a dynamic Neumann eigenfunction of $\Delta^d$ with eigenvalue $\lambda_{k,N}^d$. 
\end{proposition}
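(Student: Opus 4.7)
The plan is to express the Rayleigh quotient
$R(f) := \frac{\sum_{t=0}^{t_{\max}} \| |\nabla_{g_t} \Phi^{(t)}_* f| \|_{L^2(M_t;\mu_t)}^2}{|\mathrm{T}| \|f\|_{L^2(M_0;\mu_0)}^2}$
on the right-hand side of \eqref{eq:dvariationaln} as $-\langle \Delta^d f, f\rangle_{L^2(M_0;\mu_0)} / \|f\|_{L^2(M_0;\mu_0)}^2$ on the domain of $\Delta^d$, and then invoke the standard Courant--Fischer variational characterisation of eigenvalues of a self-adjoint operator with compact resolvent (using the complete orthogonal basis $\{u_i^d\}$ of $L^2(M_0;\mu_0)$ provided by \cite[Theorem 4.4]{FKw17}).

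To establish the Rayleigh-quotient identity, I would first use the measure-preservation hypothesis $\mu_t = \mu_0 \circ (\Phi^{(t)})^{-1}$: this makes each pushforward $\Phi^{(t)}_* : L^2(M_0;\mu_0) \to L^2(M_t;\mu_t)$ a unitary isomorphism with inverse $(\Phi^{(t)})^*$, so by definition \eqref{eq:defweighteddynamiclaplacian} of $\Delta^d$,
$\langle -\Delta^d f, f\rangle_{L^2(M_0;\mu_0)} = \tfrac{1}{|\mathrm{T}|} \sum_{t=0}^{t_{\max}} \langle -\Delta_{g_t,\mu_t} \Phi^{(t)}_* f, \Phi^{(t)}_* f\rangle_{L^2(M_t;\mu_t)}$.
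Applying the weighted Green's formula already used in the proof of Theorem \ref{thm:levelset} (via \cite[Proposition 5.8 and remark after Proposition 5.10]{hofmann-mitrea-taylor}) to each summand produces $\||\nabla_{g_t}\Phi^{(t)}_* f|\|_{L^2(M_t;\mu_t)}^2$ minus a boundary integral over $\partial M_t$ involving $\Phi^{(t)}_* f$ and $\partial/\partial \mathbf{n}_t$. In the Dirichlet case these boundary integrals vanish pointwise since $\Phi^{(t)}_* f \equiv 0$ on $\partial M_t$; in the dynamic Neumann case, I would pull each boundary integral back to $\partial M_0$ along $\Phi^{(t)}|_{\partial M_0}$ and use that $(\Phi^{(t)}_* f)\circ \Phi^{(t)} = f$, converting the sum into an integral over $\partial M_0$ of $f$ against $\tfrac{1}{|\mathrm{T}|}\sum_t \tfrac{\partial(\Phi^{(t)}_* f)}{\partial \mathbf{n}_t}$, which vanishes by \eqref{eq:dneumannbc}. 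Once this identity is in hand, I would expand an admissible $f$ in the eigenbasis as $f = \sum_i c_i u_i^d$: the orthogonality constraints $\int_{M_0} u_i^d f\,\dd\mu_0 = 0$ for $i < k$ force $c_1 = \cdots = c_{k-1} = 0$, and the ratio $R(f) = \frac{\sum_{i \ge k} c_i^2(-\lambda_{i,N}^d)}{\sum_{i \ge k} c_i^2}$ is then minimised at $-\lambda_{k,N}^d$, attained when $f \propto u_k^d$. A standard density argument (smooth functions satisfying the boundary condition are dense in the relevant Sobolev space) extends the characterisation from the domain of $\Delta^d$ to all of $W^{1,2}(M_0)$.

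The main obstacle is the boundary-term bookkeeping in the dynamic Neumann case: the Hausdorff measures $(\mu_t)_{n-1}$ on $\partial M_t$ do \emph{not} transform simply under $\Phi^{(t)}$ (in contrast to the top-dimensional measures $\mu_t$ which they preserve by hypothesis), so one must track the boundary Jacobian factors and verify that \eqref{eq:dneumannbc} is stated in exactly the form that absorbs them. This subtlety is essentially the content of \cite[Theorem 4.4]{FKw17}, which establishes that the dynamic Neumann condition is the natural boundary condition arising from setting the sum of boundary terms in the quadratic-form calculation to zero; the cleanest write-up may therefore be to reduce the boundary computation to that reference rather than redo it here.
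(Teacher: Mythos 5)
Your sketch is correct and matches the approach the paper implicitly relies on: the paper does not prove this proposition itself but cites \cite{FKw17} for the variational formula and \cite[pp.~16--17]{chavel84} for the attainment characterisation, and your argument (unitarity of $\Phi^{(t)}_*$, Green's formula with boundary terms killed by the Dirichlet or dynamic Neumann condition, then Courant--Fischer via expansion in the eigenbasis) is exactly what those references do. Your closing observation about the boundary-measure Jacobians in the dynamic Neumann case is a real subtlety and is indeed the content of \cite[Theorem 4.4]{FKw17}, so deferring it to that reference is the same choice the paper makes.
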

Extending the reasoning in e.g.\ \cite[pp.16--17]{chavel84} to the dynamic case yields that the infimum in \eqref{eq:dvariationaln} is attained if and only if $f$ is a dynamic Neumann eigenfunction of $\Delta^d$ with eigenvalue $\lambda_{k,N}^d$. 
This proposition also extends directly to the Dirichlet case, by similar arguments. Let $u_1^d,u_2^d,\ldots$ denote a complete orthogonal basis of Dirichlet eigenfunctions of $\Delta^d$ corresponding to $\lambda_{1,N}^d,\lambda_{2,N}^d,\ldots$ (resp.\ $\lambda_{1,D}^d,\lambda_{2,D}^d,\ldots$). Then for each $k \ge 1$, we have
\begin{align}
    \lambda_{k,D}^d &= -\inf_{\substack{f \in W^{1,2}_0(M_0) \\ \int_{M_0} u_i^d f \dd\mu_0=0, \forall i \in \{1,\ldots,k-1\}}} \frac{\sum_{t=0}^{t_{\max}} \| |\nabla_{g_t} \Phi^{(t)}_* f| \|_{L^2(M_t;\mu_t)}^2}{|\mathrm{T}|\| f \|_{L^2(M_0;\mu_0)}^2}, \label{eq:dvariationald}
\end{align}
and the infimum is attained if and only if $f$ is a Dirichlet eigenfunction of $\Delta^d$ with eigenvalue $\lambda_{k,D}^d$. 
Since $\Delta^d$ is an elliptic operator, Courant's nodal domain theorem (Theorem \ref{thm:courant}) extends to the eigenfunctions of $\Delta^d$. 

% Dynamic Courant
\begin{corollary}[to Theorem \ref{thm:courant}] \label{thm:dcourant}
For any dynamical system $\mathcal{T}$, the $k$th dynamic Neumann (resp.\ Dirichlet) eigenfunction $u_k$ of $\Delta^d$ has at most $k$ nodal domains. 
\end{corollary}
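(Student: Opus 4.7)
The plan is to adapt the proof of Theorem \ref{thm:courant} to the dynamic setting, substituting Proposition \ref{thm:dvariational} for Theorem \ref{thm:variational}. I sketch the dynamic Neumann case; the Dirichlet case is analogous.

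First I would assume, for contradiction, that the eigenfunction $u_k$ associated to $\lambda_{k,N}^d$ has at least $k+1$ nodal domains $G_1, G_2, \ldots$. For each $j = 1, \ldots, k$ set $\psi_j := u_k|_{G_j}$ on $G_j$ and $0$ elsewhere. The Chavel-style approximation argument of \cite[pp.\ 21--22]{chavel84}, used already in Theorem \ref{thm:courant}, shows $\psi_j \in W^{1,2}(M_0;\mu_0)$. Since the $G_j$ are pairwise disjoint and each $\Phi^{(t)}$ is a diffeomorphism, the supports of $\Phi^{(t)}_* \psi_1, \ldots, \Phi^{(t)}_* \psi_k$ are pairwise disjoint in $M_t$ for every $t$; consequently the weak gradients of linear combinations decouple, namely $|\nabla_{g_t} \Phi^{(t)}_* (\sum_j \alpha_j \psi_j)|_{g_t}^2 = \sum_j \alpha_j^2 |\nabla_{g_t} \Phi^{(t)}_* \psi_j|_{g_t}^2$ almost everywhere on $M_t$.

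Second, I would establish that the Rayleigh quotient appearing in \eqref{eq:dvariationaln} equals $-\lambda_{k,N}^d$ when evaluated at $\psi_j$. The cleanest route is to use $\psi_j$ as a test function in the weak form of $\Delta^d u_k = \lambda_{k,N}^d u_k$: since $\Phi^{(t)}_* \psi_j = \Phi^{(t)}_* u_k$ on $\Phi^{(t)}(G_j)$ and zero elsewhere, this yields
\begin{align*}
    \sum_{t=0}^{t_{\max}} \| |\nabla_{g_t} \Phi^{(t)}_* \psi_j| \|_{L^2(M_t;\mu_t)}^2 = -|\mathrm{T}|\,\lambda_{k,N}^d \, \|\psi_j\|_{L^2(M_0;\mu_0)}^2.
\end{align*}
Combined with the decoupling above, the same identity holds for any $f := \sum_{j=1}^k \alpha_j \psi_j$ with $\|f\|_{L^2(M_0;\mu_0)}^2 = \sum_j \alpha_j^2 \|\psi_j\|_{L^2(M_0;\mu_0)}^2$ in place of $\|\psi_j\|_{L^2(M_0;\mu_0)}^2$.

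Third, as in \cite[p.\ 17]{chavel84}, I would choose $\alpha_1, \ldots, \alpha_k \in \mathbb{R}$, not all zero, so that $f$ is $L^2(M_0;\mu_0)$-orthogonal to $u_1^d, \ldots, u_{k-1}^d$ ($k-1$ linear constraints in $k$ unknowns). The identity above shows $f$ attains the infimum in \eqref{eq:dvariationaln}, so Proposition \ref{thm:dvariational} forces $f$ to be a dynamic Neumann eigenfunction of $\Delta^d$ with eigenvalue $\lambda_{k,N}^d$. But $f \equiv 0$ on the nonempty open set $G_{k+1}$, and $\Delta^d = \frac{1}{|\mathrm{T}|} \sum_{t=0}^{t_{\max}} (\Phi^{(t)})^* \Delta_{g_t,\mu_t} \Phi^{(t)}_*$ is a second-order linear elliptic operator on $M_0$ with smooth coefficients (each summand being the Laplace--Beltrami operator for the pullback metric $(\Phi^{(t)})^* g_t$ and measure $\mu_0$, and ellipticity being preserved by averaging). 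Hence Aronszajn's unique continuation theorem \cite{aronszajn} forces $f \equiv 0$ on all of $M_0$, contradicting the nontriviality of the $\alpha_j$.

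The main obstacle is the Rayleigh-quotient identity for each $\psi_j$. A direct attempt via termwise Green's formula on $\Phi^{(t)}(G_j)$ forces one to reckon with boundary integrals on $\Phi^{(t)}(G_j) \cap \partial M_t$ individually, whose cancellation after summation in $t$ is governed by the dynamic Neumann condition \eqref{eq:dneumannbc} and involves tracking the Jacobians of the boundary restrictions $\Phi^{(t)}|_{\partial M_0}$. Using the weak form of the eigenvalue equation with $\psi_j$ as test function sidesteps this bookkeeping entirely, provided one has first justified $\psi_j \in W^{1,2}(M_0;\mu_0)$ by the Chavel approximation.
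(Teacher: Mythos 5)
Your proposal is correct and follows essentially the same path as the paper: adapt the proof of Theorem \ref{thm:courant} by replacing the static Rayleigh quotient with the dynamic one from Proposition \ref{thm:dvariational}, verify the quotient equals $-\lambda_{k,N}^d$ on each nodal-domain restriction $\psi_j$, and conclude via the orthogonality/Aronszajn argument. The only difference is cosmetic: you establish the Rayleigh-quotient identity by testing the weak eigenvalue equation with $\psi_j$, whereas the paper invokes the termwise Green's formula route of \eqref{eq:dgreen}--\eqref{eq:dgreenlambda}; these are equivalent, and your route does make the cancellation of the $\partial M_t$ boundary contributions (governed by the dynamic Neumann condition \eqref{eq:dneumannbc}) less delicate to state.
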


\begin{proof}
The proof is the same as that for Theorem \ref{thm:courant}, replacing $M$, $\mu$ and $\lambda_{k,N}$ with $M_0$, $\mu_0$ and $\lambda_{k,N}^d$, replacing the Rayleigh quotients $\frac{\||\nabla \cdot|\|_{L^2(\cdot;\mu)}^2}{\|\cdot\|_{L^2(\cdot;\mu)}^2}$ as in Theorem \ref{thm:variational} with dynamic Rayleigh quotients $\frac{\sum_{t=0}^{t_{\max}} \||\nabla_{g_t} \Phi^{(t)}_* \cdot |\|_{L^2(\cdot;\mu_0)}^2}{|\mathrm{T}|\|\cdot\|_{L^2(\cdot;\mu_0)}^2}$ as in Proposition \ref{thm:dvariational}, and replacing \eqref{eq:nablau}--\eqref{eq:lambdauu} with \eqref{eq:dgreen} and the reasoning used to obtain \eqref{eq:dgreenlambda}. 
\end{proof}

% Geometry of mixing
The operator $\Delta^d$ can be expressed as the weighted Laplace-Beltrami operator $\Delta_{\bar{g},\mu_0}$ on $(M_0,\bar{g},\mu_0)$, where $\bar{g}$ (called the \emph{geometry of mixing metric} \cite{karrasch}) is  the `harmonic mean'\footnote{%
$\bar{g}$ is defined via the \emph{inverse metric}. The inverse metric of a Riemannian metric $g$ on $M_0$ is given by $g^{-1}:T^* M_0 \times T^* M_0 \to \mathbb{R}$, $g^{-1}(\eta,\omega):=g(\eta^\sharp,\omega^\sharp)$, where $\sharp$ denotes raising an index (see e.g.\ \cite[p.342]{lee}). Then $\bar{g}$ is the unique metric on $M_0$ for which $\bar{g}^{-1}(\eta,\omega):=\frac{1}{|\mathrm{T}|} \sum_{t=0}^{t_{\max}} ((\Phi^{(t)})^* g_t)^{-1}(\eta,\omega)$.} 
of the pullbacks $(\Phi^{(t)})^* g_t$ of the metrics $g_t$ to the initial-time manifold $M_0$ \cite{karrasch}.  
Note that even if each $\mu_t$ is Riemannian volume on $(M_t,g_t)$, $\mu_0$ is not necessarily Riemannian volume on $(M_0,\bar{g})$ \cite[section 4.1.3]{karrasch}. 
\begin{proposition}[{\cite[pp.1864, 1875]{karrasch}}] \label{thm:geommixing}
In any dynamical system, $\Delta^d$ is the weighted Laplace-Beltrami operator for the Riemannian manifold $(M_0,\bar{g},\mu_0)$, i.e. 
\begin{align}
    \Delta^d = \Delta_{\bar{g}, \mu_0}. \label{eq:geommixing}
\end{align}
\end{proposition}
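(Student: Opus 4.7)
The cleanest route is to verify that both operators generate the same Dirichlet bilinear form on $L^2(M_0;\mu_0)$, and then conclude by self-adjointness. Concretely, for $f,\psi\in C^\infty(M_0)$ (with compactly supported in $\mathrm{Int}\,M_0$ in the Dirichlet case, so that all boundary terms below vanish), the plan is to compute $\int_{M_0} (\Delta^d f)\,\psi\dd\mu_0$ and show it equals $-\int_{M_0} \bar g^{-1}(df,d\psi)\dd\mu_0$, which by Green's formula applied on $(M_0,\bar g,\mu_0)$ equals $\int_{M_0}(\Delta_{\bar g,\mu_0} f)\psi\dd\mu_0$. Since this holds for all $\psi$ in a dense subspace, the fundamental lemma of the calculus of variations gives $\Delta^d f = \Delta_{\bar g,\mu_0} f$ pointwise.

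The computation proceeds as follows. Using \eqref{eq:defweighteddynamiclaplacian} and the measure-preservation $\mu_t = \mu_0\circ(\Phi^{(t)})^{-1}$ (change of variables), one obtains
\begin{align*}
\int_{M_0}(\Delta^d f)\psi\dd\mu_0
= \frac{1}{|\mathrm{T}|}\sum_{t=0}^{t_{\max}} \int_{M_t} \bigl(\Delta_{g_t,\mu_t}\Phi^{(t)}_* f\bigr)\bigl(\Phi^{(t)}_*\psi\bigr)\dd\mu_t.
\end{align*}
Applying Green's formula for $\Delta_{g_t,\mu_t}$ on each $(M_t,g_t,\mu_t)$ (as in \eqref{eq:nablau}--\eqref{eq:lambdauu}), the above equals
\begin{align*}
-\frac{1}{|\mathrm{T}|}\sum_{t=0}^{t_{\max}} \int_{M_t} g_t^{-1}\bigl(d(\Phi^{(t)}_* f),\,d(\Phi^{(t)}_*\psi)\bigr)\dd\mu_t,
\end{align*}
plus boundary contributions that vanish in the Dirichlet setting, and which in the dynamic Neumann setting sum to zero by \eqref{eq:dneumannbc} (this step requires a separate but routine check: the sum of the outward normal derivatives at $\partial M_0$ vanishes when $f$ satisfies the dynamic Neumann boundary condition). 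Pulling back to $M_0$ via $\Phi^{(t)}$, and using the definition of the pullback metric, each term becomes $\int_{M_0}\bigl((\Phi^{(t)})^* g_t\bigr)^{-1}(df,d\psi)\dd\mu_0$.

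Summing and exchanging the finite sum with the integral gives
\begin{align*}
\int_{M_0}(\Delta^d f)\psi\dd\mu_0
= -\int_{M_0}\mleft(\frac{1}{|\mathrm{T}|}\sum_{t=0}^{t_{\max}}\bigl((\Phi^{(t)})^* g_t\bigr)^{-1}\mright)(df,d\psi)\dd\mu_0
= -\int_{M_0}\bar g^{-1}(df,d\psi)\dd\mu_0,
\end{align*}
by the definition of $\bar g$ in the footnote. The right-hand side is $-\int_{M_0}\bar g(\nabla_{\bar g}f,\nabla_{\bar g}\psi)\dd\mu_0$, which by Green's formula on $(M_0,\bar g,\mu_0)$ (with boundary terms again vanishing, using that the Neumann condition for $\Delta_{\bar g,\mu_0}$ on $\partial M_0$ coincides with the dynamic Neumann condition \eqref{eq:dneumannbc} -- see the remark below) equals $\int_{M_0}(\Delta_{\bar g,\mu_0}f)\psi\dd\mu_0$.

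The only genuine subtlety, and what I would treat as the main obstacle, is the boundary-term accounting: one must verify that $\bar g$ is a bona fide Riemannian metric (positive definiteness follows since each $((\Phi^{(t)})^* g_t)^{-1}$ is a positive-definite symmetric tensor and so is their average), and that the unit outward $\bar g$-normal at $\partial M_0$ generates a boundary condition compatible with \eqref{eq:dneumannbc}. Both points can be handled by rewriting $\bar g$ fibrewise at each $p\in\partial M_0$ and tracing through the change-of-variables carefully; with these in hand, the argument above yields \eqref{eq:geommixing}.
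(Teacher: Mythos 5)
The paper does not prove this proposition; it is cited directly from Karrasch \cite[pp.1864, 1875]{karrasch}, so there is no in-text proof to compare against. Your weak-form argument is a legitimate and essentially self-contained way to establish the claim, and each step of the chain
\[
\int_{M_0}(\Delta^d f)\psi\dd\mu_0
= -\frac{1}{|\mathrm{T}|}\sum_{t}\int_{M_t} g_t^{-1}\bigl(d\,\Phi^{(t)}_*f,\,d\,\Phi^{(t)}_*\psi\bigr)\dd\mu_t
= -\int_{M_0}\bar g^{-1}(df,d\psi)\dd\mu_0
\]
checks out: the change of variables uses $\mu_t=\mu_0\circ(\Phi^{(t)})^{-1}$, and the pullback-of-the-cometric identity $\bigl((\Phi^{(t)})^*g_t\bigr)^{-1}(df,d\psi)=\bigl(g_t^{-1}\bigl(d\,\Phi^{(t)}_*f,\,d\,\Phi^{(t)}_*\psi\bigr)\bigr)\circ\Phi^{(t)}$ is a routine linear-algebra fact about how inverse metrics transform under a diffeomorphism.

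However, the issue you flag as ``the only genuine subtlety'' --- the boundary-term accounting and the compatibility of the $\bar g$-Neumann normal with the dynamic Neumann condition --- is not actually needed to prove \eqref{eq:geommixing}. The proposition asserts an equality of \emph{differential operators}, not of self-adjoint extensions under particular boundary conditions. Taking $\psi\in C_c^\infty(\Int M_0)$ throughout makes every boundary term vanish identically (in both the Dirichlet and Neumann Green's formulas), and the density of such $\psi$ then forces $\Delta^d f = \Delta_{\bar g,\mu_0}f$ on $\Int M_0$ for every $f\in C^\infty(M_0)$; continuity of both sides extends the identity to $\partial M_0$. The compatibility of the dynamic Neumann condition \eqref{eq:dneumannbc} with the Neumann condition for $(M_0,\bar g,\mu_0)$ is a genuinely separate statement, which the paper indeed records and cites separately in the sentence following the proposition. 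So your argument succeeds, but you have introduced (and then promised to resolve) a complication that the proposition does not require you to face.
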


% Geometry of mixing Cheeger ratio
For any dynamical system $\mathcal{T}$, let $\nabla_{g_t}$ and $\nabla_{\bar{g}}$ denote the gradient operator for the time-$t$ manifold $(M_t,g_t,\mu_t)$ and the geometry of mixing manifold $(M_0,\bar{g},\mu_0)$, respectively. It follows immediately from the definition of $\bar{g}$ that $|\nabla_{\bar{g}} f|^2=\frac{1}{|\mathrm{T}|} \sum_{t=0}^{t_{\max}} \bigl|\nabla_{g_t} \Phi^{(t)}_* f\bigr|^2$ for $f \in W^{1,2}(M_0)$. The Neumann boundary condition for the geometry of mixing manifold is the same as the dynamic Neumann boundary condition \cite[p.1864]{karrasch}. For $A \in \mathscr{P}_N(M_0)$ or $A \in \mathscr{P}_D(M_0)$, respectively, we denote the (Neumann or Dirichlet) Cheeger ratio of $A$ on the geometry of mixing manifold by $\mathcal{J}_N(A;\bar{g},\mu_0)$ or $\mathcal{J}_D(A;\bar{g},\mu_0)$, respectively. Then $\mathcal{J}_N(\cdot;\bar{g},\mu)$ and $\mathcal{J}_D(\cdot;\bar{g},\mu)$ give upper bounds on the dynamic Cheeger ratios and dynamic Cheeger constants \cite[Proposition 4.3]{karrasch-schilling}:
\begin{align}
\mathcal{J}_N^d(A) &\le \mathcal{J}_N(A;\bar{g},\mu), \quad \forall A \in \mathscr{P}_N(M_0) \label{eq:schillingn} \\
\mathcal{J}_D^d(A) &\le \mathcal{J}_D(A;\bar{g},\mu), \quad \forall A \in \mathscr{P}_D(M_0). \label{eq:schillingd} 
\end{align}

% Existing dynamic Cheeger inequalities 
The bounds in Theorem \ref{thm:firstcheeger} have been extended to the dynamic setting. 
\begin{theorem}[Dynamic Cheeger inequality {\cite{F15,FKw17,FJ18}}] \label{thm:firstdynamiccheeger} \hfill \\
\begin{itemize}
\item \cite[Theorem 3.2]{F15}, \cite[Theorem 4.5]{FKw17}: For any dynamical system, we have
\begin{align}
    \lambda_{2,N}^d \le -\frac{1}{4} (h_{2,N}^d)^2. \label{eq:dynfirstcheeger}
\end{align}
\item \cite[Theorem 2]{FJ18} For any dynamical system such that each $(M_t,g_t,\mu_t)$ is an $n$-dimensional, $C^\infty$ submanifold of $\mathbb{R}^n$ equipped with the Euclidean metric and Lebesgue measure, we have 
\begin{align}
    \lambda_{1,D}^d \le -\frac{1}{4} (h_{1,D}^d)^2.
\end{align}
\end{itemize}
\end{theorem}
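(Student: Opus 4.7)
The plan is to reduce both parts of the dynamic Cheeger inequality to their static counterparts already established in this paper, exploiting the identification $\Delta^d = \Delta_{\bar{g},\mu_0}$ from Proposition \ref{thm:geommixing} together with the static-to-dynamic Cheeger comparisons \eqref{eq:schillingn}--\eqref{eq:schillingd}.

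For the Neumann bound, I would take $u$ to be a dynamic Neumann eigenfunction of $\Delta^d$ with eigenvalue $\lambda_{2,N}^d$. By Proposition \ref{thm:geommixing} and the remark that the dynamic Neumann boundary condition coincides with the Neumann condition for $(M_0,\bar{g})$, this $u$ is a Neumann eigenfunction of the static weighted operator $\Delta_{\bar{g},\mu_0}$ on $(M_0,\bar{g},\mu_0)$, with the same eigenvalue. Since $u$ has exactly two nodal domains (by Corollary \ref{thm:dcourant} and the orthogonality of $u$ to the constants), I would apply Corollary \ref{thm:firstcheegerw} on $(M_0,\bar{g},\mu_0)$ to obtain a nodal domain $G$ of $u$ and a positive-measure set of levels $s$ for which the 2-packing $A := \{G_s, M_0 \setminus \overline{G_s}\}$ satisfies $\lambda_{2,N}^d \le -\frac{1}{4} \mathcal{J}_N(A;\bar{g},\mu_0)^2$. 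The comparison \eqref{eq:schillingn} then gives $\mathcal{J}_N(A;\bar{g},\mu_0) \ge \mathcal{J}_N^d(A) \ge h_{2,N}^d$, so $-\frac{1}{4}\mathcal{J}_N(A;\bar{g},\mu_0)^2 \le -\frac{1}{4}(h_{2,N}^d)^2$, which delivers \eqref{eq:dynfirstcheeger}.

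The Dirichlet case proceeds along the same lines: the first Dirichlet eigenfunction of $\Delta^d$ corresponds to $\lambda_{1,D}^d$ and has a unique nodal domain $G = M_0 \setminus \partial M_0$; Corollary \ref{thm:firstcheegerw} produces a level set $G_s$ with $\lambda_{1,D}^d \le -\frac{1}{4}\mathcal{J}_D(G_s;\bar{g},\mu_0)^2$, and \eqref{eq:schillingd} together with the definition of $h_{1,D}^d$ closes the argument. The Euclidean hypothesis is used here to guarantee that $\partial M_t = \Phi^{(t)}(\partial M_0)$ for each $t$, so that the Dirichlet condition lifts consistently between the time-indexed manifolds and the geometry-of-mixing manifold.

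The main obstacle is not in any single step of the reduction, but rather in the underlying machinery that makes the reduction possible. Once Proposition \ref{thm:geommixing} and the perimeter comparisons \eqref{eq:schillingn}--\eqref{eq:schillingd} are in hand, the argument is essentially a two-line combination. The non-trivial work, namely bounding time-averaged codimension-one Hausdorff measures of the images $\Phi^{(t)}(\partial^{M_0} A)$ against the $\bar{g}$-perimeter of $A$, has already been carried out in \cite{karrasch-schilling}. The only subtlety worth verifying explicitly is that the level sets produced by Corollary \ref{thm:firstcheegerw} on $(M_0,\bar{g},\mu_0)$ lie in the smooth packing classes $\mathscr{P}_{2,N}(M_0)$ or $\mathscr{P}_{1,D}(M_0)$ used in Definition \ref{def:dcheeg}; this is automatic because these classes depend only on the smooth structure of $M_0$, which is shared by all of $(M_0,g_0)$ and $(M_0,\bar{g})$.
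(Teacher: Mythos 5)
Your argument is correct, but it takes a genuinely different route from the one the paper develops. The paper does not prove Theorem~\ref{thm:firstdynamiccheeger} itself (it is cited from \cite{F15,FKw17,FJ18}), but the paper's constructive analogue, Corollary~\ref{thm:dfirstcheegerd}, follows from Theorem~\ref{thm:dlevelset}, which works directly with the time-averaged quantities: it applies the Rayleigh--Cauchy--Schwarz argument timestep-by-timestep and then a second Cauchy--Schwarz over the time average, producing a bound $\lambda_{2,N}^d \le -\frac{1}{4}\mathcal{J}_N^d(\{G_s,M_0\setminus\overline{G_s}\})^2$ phrased in the dynamic Cheeger ratio $\mathcal{J}_N^d$ itself. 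You instead reduce to the static problem on the geometry-of-mixing manifold $(M_0,\bar{g},\mu_0)$ via Proposition~\ref{thm:geommixing}, apply Corollary~\ref{thm:firstcheegerw} there, and then pass back to the dynamic Cheeger ratio through \eqref{eq:schillingn}--\eqref{eq:schillingd}. This is exactly the technique the paper reserves for Theorems~\ref{thm:dmiclo} and \ref{thm:dmiclor} (where it explicitly remarks that doing so ``avoids some technicalities''). What each approach buys: your reduction is shorter and almost entirely delegates the work to \cite{karrasch,karrasch-schilling}, but the constructive bound it yields is stated in terms of $\mathcal{J}_N(\cdot;\bar{g},\mu_0)$, which can strictly exceed $\mathcal{J}_N^d(\cdot)$; the paper's direct route is more work but gives the sharper constructive guarantee directly on $\mathcal{J}^d$. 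For the purposes of the eigenvalue-vs.-Cheeger-constant inequality stated in the theorem, the two coincide, since both dominate $h_{2,N}^d$ and $h_{1,D}^d$.

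One small inaccuracy in your closing remark: the identity $\partial M_t = \Phi^{(t)}(\partial M_0)$ holds for \emph{any} diffeomorphism $\Phi^{(t)}:M_0\to M_t$ and has nothing to do with the Euclidean hypothesis. The Euclidean/Lebesgue restriction in the Dirichlet bullet is an artifact of the original proof in \cite[Theorem 2]{FJ18}. In fact, your reduction --- like the paper's own Corollary~\ref{thm:dfirstcheegerd} --- delivers the Dirichlet inequality $\lambda_{1,D}^d \le -\frac{1}{4}(h_{1,D}^d)^2$ for an arbitrary dynamical system in the sense of Definition~\ref{def:dynsys}, i.e.\ you are proving something slightly more general than the cited version; it is worth noting this explicitly rather than attributing a spurious role to the Euclidean hypothesis.
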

Combining the approach from \cite{FKw17} and \cite{FJ18}, equation \eqref{eq:dynfirstcheeger} extends to dynamical systems on arbitrary weighted Riemannian manifolds as in Definition \ref{def:dynsys}.

%%%%%%%%%%%%%%%%%%%%%%%%%%%%%%
%%%%%%%%%%%%%%%%%%%%%%%%%%%%%%
%%%%%%%%%%%%%%%%%%%%%%%%%%%%%%
%                         Dynamic Cheeger inequalities                                     
%%%%%%%%%%%%%%%%%%%%%%%%%%%%%%
%%%%%%%%%%%%%%%%%%%%%%%%%%%%%%
%%%%%%%%%%%%%%%%%%%%%%%%%%%%%%
Similarly to the static case, we can give constructive versions of the dynamic Cheeger inequality (Theorem \ref{thm:dlevelset} and Corollary \ref{thm:dfirstcheegerd}). Specifically, we show that within any nodal domain of an eigenfunction $u$ of $\Delta^d$, a positive-measure collection of superlevel sets of $u$ have their dynamic Cheeger ratio bounded above by the corresponding eigenvalue (Theorem \ref{thm:dlevelset}). This immediately yields a constructive version of Theorem \ref{thm:firstdynamiccheeger} (Corollary \ref{thm:dfirstcheegerd}). 

%
% Dynamic level set theorem
\begin{theorem} \label{thm:dlevelset}
Let $\mathcal{T}$ be a dynamical system, and let $u$ be some Neumann, resp.\ Dirichlet, eigenfunction of $\Delta^d$ with eigenvalue $\lambda$. Let $G \subset M_0$ be any nodal domain of $u$. Then, defining 
\begin{align}
    G_s:=\{p \in G : u(p)^2>s\} \label{eq:dGsdef},
\end{align}
the set 
\begin{align}
    S_G:=\mleft\{s \in \range(u^2|_G) : G_s \in \mathscr{P}_N(M_0), \lambda \le -\frac{1}{4}\mathcal{J}_N^d(G_s)^2 \mright\}, \label{eq:dSdef} 
\end{align}
resp.\  
\begin{align}
    S_G:=\mleft\{s \in \range(u^2|_G) : G_s \in \mathscr{P}_D(M_0),\lambda \le -\frac{1}{4} \mathcal{J}_D^d(G_s)^2 \mright\}, \label{eq:dSdefd} 
\end{align} 
has positive Lebesgue measure satisfying the lower bound {
\begin{align}
    \Leb(S_G) \ge \frac{\|{\bar{h}^d}-\mathrm{h}^d\|_{L^1(\range(u^2|_G);\mathbb{P}^d)} \|u\|_{L^2(G;\mu_0)}^2}{2({\bar{h}^d} - \inf_{s \in \range(u^2|_G)} \mathrm{h}^d(s))\mu_0(G)}, \label{eq:dSmeasure}
\end{align}
where $\mathrm{h}^d$, $\bar{h}^d$ and $\mathbb{P}^d$ are defined in the proof below.}
\end{theorem}

\begin{proof}
% Proof of dynamic level set theorem
The proof proceeds as for Theorem \ref{thm:levelset}. 
For each $t \in \mathrm{T}$, define $\phi_t \in C^\infty(M_t)$ via $\ddd\mu_t=\e^{\phi_t} \dd V$, and observe that $\range((\Phi^{(t)}_* u)^2|_{\Phi^{(t)}(G)})=\range(u^2|_G)$ and that for each $s \in \range(u^2|_G)$, $\Phi^{(t)}(G_s)$ is the superlevel set of $\Phi^{(t)}_* u$ on $\Phi^{(t)}(G)$. Replacing $(M,g,\mu)$, $\phi$, $G$ and $u$, respectively, with $(M_t,g_t,\mu_t)$, $\phi_t$, $\Phi^{(t)}(G)$ and $\Phi^{(t)}_* u$, respectively, in each of \eqref{eq:hdef}, \eqref{eq:rayleigh-h} and \eqref{eq:nablau}--\eqref{eq:nodalrayleigh} yields 
\begin{align}
    \bar{h}:={}&\frac{\int_{\range(u^2|_G)} \mathcal{J}_N(\Phi^{(t)}(G_s)) \mu_t (\Phi^{(t)}(G_s)) \dd s}{\|\Phi^{(t)}_* u\|_{L^2(\Phi^{(t)}(G);\mu_t)}^2}, \label{eq:dhdef} \\
    \frac{\||\nabla_{g_t} \Phi^{(t)}_* u|\|_{L^2(\Phi^{(t)}(G);\mu_t)}^2}{\|\Phi^{(t)}_* u\|_{L^2(\Phi^{(t)}(G);\mu_t)}^2}
    \ge {}&\frac{1}{4} \bar{h}^2, \label{eq:drayleigh} \\
    \||\nabla_{g_t} \Phi^{(t)}_* u|\|_{L^2(\Phi^{(t)}(G);\mu_t)}^2
    = {}&\int_{\Phi^{(t)}(G)} \nabla_{g_t} \Phi^{(t)}_* u \cdot (\e^{\phi_t} \nabla_{g_t} \Phi^{(t)}_* u) \dd V \nonumber \\
    = {}&-\int_{\Phi^{(t)}(G)} u \cdot (\Delta_{g_t,\mu_t} \circ \Phi^{(t)}_*) u \dd\mu_t{.} \label{eq:dgreen} 
\end{align}
Multiplying \eqref{eq:drayleigh} by $\|\Phi^{(t)}_* u\|_{L^2(\Phi^{(t)}(G);\mu_t)}^2$, replacing $\||\nabla_{g_t} \Phi^{(t)}_* u|\|_{L^2(\Phi^{(t)}(G);\mu_t)}^2$ with the right-hand side of \eqref{eq:dgreen}, and then replacing $\bar{h}$ with its definition \eqref{eq:dhdef}, yields 
\begin{align*}
    -\int_{\Phi^{(t)}(G)} \Phi^{(t)}_* u \cdot \bigl(\Delta_{g_t,\mu_t} \circ \Phi^{(t)}_* \bigr) u \dd \mu_t 
    &\ge \frac{1}{4} \bar{h}^2 \|\Phi^{(t)}_* u\|_{L^2(\Phi^{(t)}(G);\mu_t)}^2 \\
    &= \frac{\mleft(\int_{\range(u^2|_G)} \mathcal{J}_N (\Phi^{(t)}(G_s)) \mu_t(\Phi^{(t)}(G_s)) \dd s\mright)^2}{4\|\Phi^{(t)}_* u\|_{L^2(\Phi^{(t)}(G);\mu_t)}^2}.
\end{align*}
Since $\Phi^{(t)}$ is measure-preserving, this is equivalent to 
\begin{align}
    -\int_G u \cdot \bigl( (\Phi^{(t)})^* \circ \Delta_{g_t,\mu_t} \circ \Phi^{(t)}_* \bigr) u \dd\mu_0 
    &\ge \frac{\mleft(\int_{\range(u^2|_G)} \mathcal{J}_N(\Phi^{(t)}(G_s)) \mu_0(G_s) \dd s \mright)^2}{4\|u\|_{L^2(G;\mu_0)}^2}. \label{eq:dlaplaceh}
\end{align}
Now, definition \eqref{eq:defweighteddynamiclaplacian} and our choice of $u$ imply $\frac{1}{|\mathrm{T}|}\sum_{t=0}^{t_{\max}} \bigl( (\Phi^{(t)})^* \circ \Delta_{g_t,\mu_t} \circ \Phi^{(t)}_* \bigr) u=\Delta^d u=\lambda u$, so summing \eqref{eq:dlaplaceh} over $t$ and dividing by $-|\mathrm{T}|\|u\|_{L^2(G;\mu_0)}^2$ {yield}
\begin{align}
    \lambda \le -\frac{1}{4|\mathrm{T}| \|u\|_{L^2(G;\mu_0)}^4} \sum_{t=0}^{t_{\max}} \biggl(\int_{\range(u^2|_G)}\mathcal{J}_N(\Phi^{(t)}(G_s)) \mu_0(G_s) \dd s \biggr)^2. \label{eq:dgreenlambda}
\end{align}
Using the relation $-\sum_{t=0}^{t_{\max}} x_t^2 \le -\frac{1}{|\mathrm{T}|} \mleft(\sum_{t=0}^{t_{\max}} x_t\mright)^2$ for $x \in \mathbb{R}^{|\mathrm{T}|}$, this bound becomes 
\begin{align}
    \lambda &\le -\frac{1}{4|\mathrm{T}|^2 \|u\|_{L^2(G;\mu_0)}^4} \mleft( \sum_{t=0}^{t_{\max}} \int_{\range(u^2|_G)} \mathcal{J}_N(\Phi^{(t)}(G_s)) \mu_0(G_s) \dd s \mright)^2 = -\frac{1}{4} ({\bar{h}^d})^2, \label{eq:drayleigh-h}
\end{align} 
where ${\bar{h}^d}:=\frac{1}{\|u\|_{L^2(G;\mu_0)}^2} \int_{\range(u^2|_G)} \mathcal{J}_N^d (G_s) \mu_0(G_s) \dd s$. 
Thus, by the reasoning after \eqref{eq:cheegerbarbound}, the set $S_G$ defined in \eqref{eq:dSdef} has positive measure. 
{Defining} the probability measure $\mathbb{P}^{{d}}$ on $\range(u^2|_G)$ by $\mathbb{P}^{{d}}(L):=\int_L \frac{\mu_0(G_s)}{\|u\|_{L^2(G;\mu_0)}^2} \dd s$, and {letting} $\mathrm{h}^d(s):=\mathcal{J}_N^d(G_s)${,} the reasoning for \eqref{eq:Smeasure} yields {\eqref{eq:dSmeasure}}.
\end{proof}

% Dynamic Cheeger inequality on Dirichlet manifolds
\begin{corollary} \label{thm:dfirstcheegerd}
For any dynamical system $\mathcal{T}$, and for any dynamic Neumann eigenfunction $u$ of $\Delta^d$ corresponding to $\lambda_{2,N}^d$, there is a nodal domain $G$ of $u$ such that the set $S_G$ defined in \eqref{eq:dSdef} has positive measure {and satisfies \eqref{eq:dSmeasure}}, and for $s \in S_G$, defining $G_s$ as in \eqref{eq:dGsdef}, the 2-packing $\{G_s,M \backslash \overline{G_s}\}$ satisfies 
\begin{align}
    \lambda_{2,N}^d \le -\frac{1}{4} \mathcal{J}_N^d(\{G_s,M \backslash \overline{G_s}\})^2. \label{eq:dfirstcheegern}
\end{align}
If $\partial M_0 \ne \emptyset$, the leading Dirichlet eigenfunction $\lambda_{1,D}^d$ of $\Delta^d$ is simple, and the corresponding eigenfunction $u$ has only a single nodal domain $G=M_0 \backslash \partial M_0$. The set $S_G$ defined in \eqref{eq:dSdefd} has positive measure, and for $s \in S_G$, the set $G_s$ defined in \eqref{eq:Gsdef} satisfies 
\begin{align}
    \lambda_{1,D}^d \le -\frac{1}{4} \mathcal{J}_D^d(G_s)^2. \label{eq:dfirstcheegerd}
\end{align}
\end{corollary}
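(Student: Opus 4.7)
The plan is to mirror the proof of Corollary \ref{thm:firstcheegerw} using the dynamic tools already established: Corollary \ref{thm:dcourant} to bound the number of nodal domains of the eigenfunction, Theorem \ref{thm:dlevelset} to deliver admissible superlevel sets satisfying the Cheeger bound, and Proposition \ref{thm:geommixing} to reduce the Dirichlet sign/simplicity claim to the standard weighted Laplace-Beltrami case.

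For the Neumann claim, let $u$ be any dynamic Neumann eigenfunction with eigenvalue $\lambda_{2,N}^d$. Since $\lambda_{2,N}^d<0=\lambda_{1,N}^d$ and $u$ is $L^2(M_0;\mu_0)$-orthogonal to the constants, the continuous function $u$ must change sign, giving at least two nodal domains; Corollary \ref{thm:dcourant} forces exactly two. Let $G$ denote the nodal domain satisfying $\mu_0(G)\le \tfrac12 \mu_0(M_0)$, so that $G_s\subseteq G$ yields $\mu_0(G_s)\le \mu_0(M_0\setminus \overline{G_s})$ for every $s\in\range(u^2|_G)$. Theorem \ref{thm:dlevelset} supplies the positive-measure set $S_G$ of \eqref{eq:dSdef}; for each $s\in S_G$ we have $G_s\in\mathscr{P}_N(M_0)$, and it is immediate that $M_0\setminus\overline{G_s}$ also lies in $\mathscr{P}_N(M_0)$ with the same relative boundary $\partial^{M_0} G_s$. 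Both dynamic Cheeger ratios therefore share the numerator $\sum_{t=0}^{t_{\max}}(\mu_t)_{n-1}(\Phi^{(t)}(\partial^{M_0} G_s))$, and the measure comparison gives
\begin{equation*}
    \mathcal{J}_N^d(\{G_s, M_0\setminus\overline{G_s}\}) = \max\bigl\{\mathcal{J}_N^d(G_s),\mathcal{J}_N^d(M_0\setminus\overline{G_s})\bigr\} = \mathcal{J}_N^d(G_s),
\end{equation*}
so the inequality in \eqref{eq:dSdef} delivers \eqref{eq:dfirstcheegern}.

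For the Dirichlet claim, Proposition \ref{thm:geommixing} rewrites $\Delta^d$ as the weighted Laplace-Beltrami operator $\Delta_{\bar g,\mu_0}$ on $(M_0,\bar g,\mu_0)$, so the standard $|u|$-argument transfers verbatim: for any first Dirichlet eigenfunction $u$, each summand $\||\nabla_{g_t} \Phi^{(t)}_* u|\|_{L^2(M_t;\mu_t)}^2$ in the dynamic Rayleigh quotient \eqref{eq:dvariationald} is unchanged when $u$ is replaced by $|u|$, so $|u|$ is also a minimiser and hence a first Dirichlet eigenfunction. Aronszajn's unique continuation principle \cite{aronszajn} then forces $|u|>0$ on $M_0\setminus\partial M_0$, so $u$ has constant sign there; simplicity of $\lambda_{1,D}^d$ follows because two first eigenfunctions of constant sign cannot be $L^2(M_0;\mu_0)$-orthogonal unless one vanishes. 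The unique nodal domain is therefore $G=M_0\setminus\partial M_0$, and Theorem \ref{thm:dlevelset} directly yields \eqref{eq:dfirstcheegerd}.

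The only delicate ingredient is the sign/simplicity claim for $\lambda_{1,D}^d$; the geometry-of-mixing identification (Proposition \ref{thm:geommixing}) removes the subtlety entirely by converting the dynamic problem into the classical weighted one, so the rest of the argument is a clean application of Theorem \ref{thm:dlevelset} in both cases.
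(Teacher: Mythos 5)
Your Neumann argument matches the paper's: Corollary \ref{thm:dcourant} plus orthogonality to constants gives exactly two nodal domains, you pick the smaller one, and Theorem \ref{thm:dlevelset} yields the bound; your added sentence about the two dynamic Cheeger ratios sharing a numerator is a harmless elaboration of what the paper leaves implicit.

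Your Dirichlet argument takes a genuinely different route from the paper, but it has a gap. The paper applies Corollary \ref{thm:dcourant} to see that $u$ has exactly one nodal domain, then invokes the \emph{strong maximum principle} (Protter--Weinberger) to upgrade ``$u \ge 0$'' to ``$u>0$ on $\Int M_0$,'' after which simplicity and the rest follow. You instead use the classical $|u|$-trick: since $\Phi^{(t)}_*|u| = |\Phi^{(t)}_* u|$ and $\bigl|\nabla_{g_t}|v|\bigr| = |\nabla_{g_t} v|$ a.e., $|u|$ has the same dynamic Rayleigh quotient as $u$ and is therefore also a first Dirichlet eigenfunction. That is a perfectly good alternative to Courant here (and Proposition \ref{thm:geommixing} is not even needed for this step, since the Rayleigh-quotient invariance can be checked term-by-term). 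The gap is in the next sentence: Aronszajn's unique continuation principle does \emph{not} on its own give $|u|>0$ in the interior. Aronszajn rules out vanishing to infinite order at a point (strong UCP) or vanishing on an open set (weak UCP); a nonnegative smooth solution that vanishes at a single interior point $p$ is not a priori in either situation --- the minimum condition only forces the first two orders of the Taylor expansion at $p$ to vanish. The standard tool to pass from ``$|u| \ge 0$ is a nontrivial solution of $\Delta_{\bar g,\mu_0}|u| = \lambda_{1,D}^d |u|$ with $\lambda_{1,D}^d<0$'' to ``$|u|>0$ in $\Int M_0$'' is the strong maximum principle for supersolutions (equivalently, the Harnack inequality), exactly as the paper cites. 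Replace the Aronszajn citation with the strong maximum principle and your proof becomes a correct alternative to the paper's; as written, the positivity step is unjustified.
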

\begin{proof}
In the Dirichlet case, we mostly follow the proof of \cite[Proposition 4.5.8]{lablee}. Corollary \ref{thm:dcourant} ensures that any Dirichlet eigenfunction $u$ of $\Delta^d$ corresponding to $\lambda_{1,D}^d$ has only one nodal domain, so the maximum principle (e.g.\ applying \cite[Chapter 2, Theorem 5]{protter-weinberger} in local coordinates) implies that $u$ is strictly positive or strictly negative on $M \backslash \partial M$. Hence there cannot be two orthogonal Dirichlet eigenfunctions of $\Delta^d$ corresponding to $\lambda_{1,D}^d$, i.e.\ $\lambda_{1,D}^d$ is a simple eigenvalue of $\Delta^d$, and \eqref{eq:dfirstcheegerd} follows from Theorem \ref{thm:dlevelset}. 
In the Neumann case, Corollary \ref{thm:dcourant} yields that any dynamic Neumann eigenfunction $u$ of $\Delta^d$ corresponding to $\lambda_{2,N}^d$ has at most two nodal domains. Since the constant function $\mathbf{1}$ is a dynamic Neumann eigenfunction of $\Delta^d$ orthogonal to $u$, $u$ has exactly two nodal domains $G_1,G_2$. One choice of $G \in \{G_1,G_2\}$ satisfies $\mu(G) \le \mu(M \backslash \overline{G})$, and \eqref{eq:dfirstcheegern} follows from Theorem \ref{thm:dlevelset}. 
\end{proof}

%%%%%%%%%%%
\subsubsection{Higher dynamic Cheeger inequalities} % Section 4.4
%%%%%%%%%%%

We can extend our {higher} Cheeger inequalities of Section \ref{sec:hcheeg} directly to the dynamic setting. Our proofs of Theorem \ref{thm:cheeger} and Proposition \ref{thm:seba} carry over directly to the dynamic setting (Theorem \ref{thm:dcheeger} and Proposition \ref{thm:dseba}). To extend Theorems \ref{thm:miclocheeger} and \ref{thm:miclorcheeger} to the dynamic setting, we can avoid some technicalities by applying those theorems on the geometry of mixing manifold $(M_0,\bar{g},\mu_0)$, and applying \eqref{eq:schillingn}. 

% Dynamic Miclo Cheeger inequality
\begin{theorem} \label{thm:dmiclo}
There is a universal constant $\hat{\eta}$ such that for any dynamical system where $M_0$ is boundaryless, for all $k \ge 1$ we have 
\begin{align}
    \lambda_{k,\emptyset}^d \le -\frac{\hat{\eta}}{k^6} (h_{k,\emptyset}^d)^2. \label{eq:dmiclo}
\end{align}
\end{theorem}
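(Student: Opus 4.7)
The plan is to reduce Theorem \ref{thm:dmiclo} to the static boundaryless higher Cheeger inequality of Miclo (Theorem \ref{thm:miclocheeger}) by transferring the dynamic setup to the geometry of mixing manifold, exactly as hinted in the paragraph preceding the theorem statement. Since $M_0$ is boundaryless by hypothesis, Proposition \ref{thm:geommixing} identifies the dynamic Laplacian $\Delta^d$ with the ordinary weighted Laplace--Beltrami operator $\Delta_{\bar{g},\mu_0}$ on the boundaryless weighted manifold $(M_0,\bar{g},\mu_0)$. Consequently, $\lambda_{k,\emptyset}^d$ coincides with the $k$th boundaryless eigenvalue of $\Delta_{\bar{g},\mu_0}$.

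Applying Theorem \ref{thm:miclocheeger} to this static weighted manifold yields a universal constant $\hat{\eta}$ with
\[
\lambda_{k,\emptyset}^d \le -\frac{\hat{\eta}}{k^6}\, h_{k,\emptyset}(\bar{g},\mu_0)^2,
\]
where $h_{k,\emptyset}(\bar{g},\mu_0)$ denotes the static $k$th boundaryless Cheeger constant of the geometry of mixing manifold. The pointwise bound \eqref{eq:schillingn}, $\mathcal{J}_N^d(A) \le \mathcal{J}_N(A;\bar{g},\mu_0)$ for all $A \in \mathscr{P}_N(M_0)$, passes to the Cheeger ratio of any $k$-packing by taking the maximum over its members, and then to the Cheeger constant by taking the infimum over all $k$-packings; in the boundaryless case this gives $h_{k,\emptyset}^d \le h_{k,\emptyset}(\bar{g},\mu_0)$. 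Squaring and multiplying by $-\hat{\eta}/k^6 < 0$ reverses the inequality, so
\[
-\frac{\hat{\eta}}{k^6}\, h_{k,\emptyset}(\bar{g},\mu_0)^2 \le -\frac{\hat{\eta}}{k^6}\, (h_{k,\emptyset}^d)^2,
\]
and chaining the two displayed inequalities delivers the desired bound with the same universal constant $\hat{\eta}$.

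The only real obstacle is a sanity check that the hypotheses of Theorem \ref{thm:miclocheeger} are satisfied by $(M_0,\bar{g},\mu_0)$: compactness and boundarylessness of $M_0$ are inherited from the dynamical system's hypotheses, and smoothness of $\bar{g}$ follows from smoothness of each $\Phi^{(t)}$ and each $g_t$ together with the observation that the harmonic-mean-of-inverse-metrics construction of $\bar{g}$ preserves $C^\infty$ regularity and positive-definiteness. Once these routine verifications are noted, the proof is essentially a three-step citation chain; the same strategy, substituting Theorem \ref{thm:miclorcheeger} in place of Theorem \ref{thm:miclocheeger}, will yield a dynamic analogue of Miclo's improved sub-logarithmic bound and is presumably how the paper's subsequent dynamic results (e.g.\ the analogues \eqref{eq:miclorcheeger}--\eqref{eq:miclorcheegerodd}) will be proved.
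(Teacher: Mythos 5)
Your proof is correct and follows exactly the route the paper takes: transfer to the geometry of mixing manifold via Proposition \ref{thm:geommixing}, apply Miclo's static higher Cheeger inequality (Theorem \ref{thm:miclocheeger}) there, and then use \eqref{eq:schillingn} to show $h_{k,\emptyset}^d \le h_{k,\emptyset}(\bar{g},\mu_0)$ before chaining the inequalities. Your closing remark that the same substitution yields Theorem \ref{thm:dmiclor} is also precisely what the paper does.
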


% Proof of dynamic Miclo Cheeger inequality
\begin{proof}
By Proposition \ref{thm:geommixing}, $\lambda_{k,\emptyset}^d$ is the $k$th eigenvalue of $\Delta_{\bar{g},\mu_0}$. Applying Theorem \ref{thm:miclocheeger} to bound the $k$th Cheeger constant $h_{k,\emptyset}$ on the geometry of mixing manifold yields $\lambda_{k,\emptyset}^d \le -\frac{\hat{\eta}}{k^6} h_{k,\emptyset}^2$. Then \eqref{eq:schillingn} and the definitions \eqref{eq:defcheegnk} and \eqref{eq:dcheegn} imply $-h_{k,\emptyset} \le -h_{k,\emptyset}^d$, and \eqref{eq:dmiclo} follows. 
\end{proof}

\begin{theorem} \label{thm:dmiclor}
There is a universal constant $\eta$ such that for any dynamical system $\mathcal{T}$ where $M_0$ is boundaryless, for all $k \ge 1$ we have
\begin{align}
    \lambda_{2k,\emptyset}^d \le -\frac{\eta }{\log(k+1)} (h_{k,\emptyset}^d)^2.
\end{align}
\end{theorem}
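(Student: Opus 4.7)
The plan is to mirror exactly the proof of Theorem \ref{thm:dmiclo}, transferring the static higher Cheeger bound of Miclo (Theorem \ref{thm:miclorcheeger}) to the dynamic setting via the geometry of mixing manifold. First, I would invoke Proposition \ref{thm:geommixing}, which identifies $\Delta^d$ with the weighted Laplace-Beltrami operator $\Delta_{\bar{g},\mu_0}$ on the boundaryless weighted manifold $(M_0,\bar{g},\mu_0)$. In particular, the $2k$th dynamic Neumann eigenvalue $\lambda_{2k,\emptyset}^d$ coincides with the $2k$th boundaryless eigenvalue of $\Delta_{\bar{g},\mu_0}$ on $(M_0,\bar{g},\mu_0)$.

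Second, I would apply Theorem \ref{thm:miclorcheeger} directly to this weighted manifold. Since $(M_0,\bar{g},\mu_0)$ is a boundaryless weighted manifold in the sense of Section \ref{sec:cheeger-preliminaries} (with weight $\e^{\phi}$ determined by the Radon-Nikodym derivative of $\mu_0$ with respect to the Riemannian volume of $\bar{g}$), Theorem \ref{thm:miclorcheeger} yields
\begin{align*}
    \lambda_{2k,\emptyset}^d \le -\frac{\eta}{\log(k+1)}\, h_{k,\emptyset}(\bar{g},\mu_0)^2,
\end{align*}
where $h_{k,\emptyset}(\bar{g},\mu_0)$ denotes the $k$th boundaryless Cheeger constant of the geometry of mixing manifold.

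Third, I would compare the static Cheeger constant on $(M_0,\bar{g},\mu_0)$ to the dynamic Cheeger constant $h_{k,\emptyset}^d$. By \eqref{eq:schillingn}, for every Neumann $k$-packing $\{A_1,\ldots,A_k\}\in\mathscr{P}_{k,N}(M_0)$ we have $\mathcal{J}_N^d(A_i)\le \mathcal{J}_N(A_i;\bar{g},\mu_0)$ for each $i$, hence $\mathcal{J}_N^d(\{A_1,\ldots,A_k\}) \le \mathcal{J}_N(\{A_1,\ldots,A_k\};\bar{g},\mu_0)$. Taking the infimum over $k$-packings then gives $h_{k,\emptyset}^d \le h_{k,\emptyset}(\bar{g},\mu_0)$, and squaring and negating yields $-h_{k,\emptyset}(\bar{g},\mu_0)^2\le -(h_{k,\emptyset}^d)^2$. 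Chaining this with the previous display completes the proof.

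This is a short and essentially routine argument; no step looks genuinely difficult. The only thing to verify carefully is that Theorem \ref{thm:miclorcheeger}, stated in the excerpt for ``weighted manifolds with $C^\infty$ measure,'' genuinely applies to $(M_0,\bar{g},\mu_0)$, i.e.\ that $\bar{g}$ is smooth (immediate, as it is constructed from the smooth pullback metrics $(\Phi^{(t)})^* g_t$) and that $\mu_0$ has a $C^\infty$ positive density with respect to the Riemannian volume of $\bar{g}$. The latter follows because $\mu_0$ has a smooth positive density with respect to the Riemannian volume of $g_0$, and the two volume forms on $M_0$ differ by $\sqrt{\det \bar{g}/\det g_0}$, which is smooth and positive. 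Thus the only genuine content is the same sandwiching trick already used for Theorem \ref{thm:dmiclo}, now invoked with the sharper static bound \eqref{eq:miclorcheeger} in place of \eqref{eq:miclocheeger}.
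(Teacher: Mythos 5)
Your proof is correct and follows essentially the same route as the paper: identify $\Delta^d$ with $\Delta_{\bar g,\mu_0}$ via Proposition \ref{thm:geommixing}, apply Miclo's bound (Theorem \ref{thm:miclorcheeger}) on the geometry of mixing manifold, and transfer to the dynamic Cheeger constant via \eqref{eq:schillingn}. The only difference is that you spell out the check that $(M_0,\bar g,\mu_0)$ is indeed a boundaryless weighted manifold with smooth density, which the paper leaves implicit.
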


\begin{proof}
By Proposition \ref{thm:geommixing}, $\lambda_{2k,\emptyset}^d$ is the $2k$th eigenvalue of $\Delta_{\bar{g},\mu_0}$. Applying Theorem \ref{thm:miclorcheeger} to bound the $k$th Cheeger constant $h_{k,\emptyset}$ on the geometry of mixing manifold yields $\lambda_{2k,\emptyset}^d \le -\frac{\eta}{\log(k+1)} h_{k,\emptyset}^2$. Then \eqref{eq:schillingn} and the definitions \eqref{eq:defcheegnk} and \eqref{eq:dcheegn} imply $-h_{k,\emptyset} \le -h_{k,\emptyset}^d$, and \eqref{eq:dmiclo} follows. 
\end{proof}

% Nodal-domain based approach for dynamical systems
Our constructive, nodal domain-based higher Cheeger inequality, Theorem \ref{thm:cheeger}, generalises directly to the dynamic case. 

%
% Dynamic higher Cheeger inequality
\begin{theorem}[Higher dynamic Cheeger inequality] \label{thm:dcheeger}
Let $\mathcal{T}$ be a dynamical system. For each $k \ge 1$, let $r_k$ be the maximal number of nodal domains in any dynamic Neumann (resp.\ Dirichlet) eigenfunction $u$ of $\Delta^d$ with eigenvalue $\lambda\ge \lambda_{k,N}^d$ (resp.\ $\lambda \ge \lambda_{k,D}^d$). 
\begin{enumerate}
\item We have 
\begin{align}
    \lambda_{k,N}^d &\le -\frac{1}{4} (h_{r_k,N}^d)^2, \label{eq:dcheegerr} \\
    \lambda_{k,D}^d &\le -\frac{1}{4} (h_{r_k,D}^d)^2. \label{eq:dcheegerrd}
\end{align}

\item Let $u$ be an eigenfunction with eigenvalue $\lambda \ge \lambda_{k,N}^d$ (resp.\ $\lambda \ge \lambda_{k,D}^d$) and with $r_k$ nodal domains. Let $G^1,\ldots,G^{r_k} \subset M$ denote the nodal domains of $u$, and for each $i$ and each $s \in \range(u^2|_{G^i})$, let $G^i_s$ denote the $s$-superlevel set of $u^2$ on $G^i$. For each $i$, define $S_{G^i}$ as in \eqref{eq:dSdef} or \eqref{eq:dSdefd}. Then each $S_{G^i}$ has positive Lebesgue measure satisfying \eqref{eq:dSmeasure}, and for each $\{s_1,\ldots,s_{r_k}\} \in S_{G^1} \times \ldots \times S_{G^{r_k}}$, the collection $\mathcal{A}_{r_k}:=\{G_{s_1}^1,\ldots,G_{s_{r_k}}^{r_k}\}$ is a Neumann (resp.\ Dirichlet) $r_k$-packing of $M_0$ satisfying $\lambda_{k,N}^d \le -\frac{1}{4} \mathcal{J}_N^d(\mathcal{A}_{r_k})^2$ (resp.\ $\lambda_{k,D}^d \le -\frac{1}{4} \mathcal{J}_D^d(\mathcal{A}_{r_k})^2$). 
\end{enumerate}
\end{theorem}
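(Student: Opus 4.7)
The plan is to mirror the proof of the static Theorem \ref{thm:cheeger} line-for-line, with the only change being that the static level-set bound from Theorem \ref{thm:levelset} is swapped out for its dynamic counterpart, Theorem \ref{thm:dlevelset}. Nothing new is required because Theorem \ref{thm:dlevelset} and Corollary \ref{thm:dcourant} already did the analytic heavy lifting.

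First, I would fix a dynamic Neumann (resp.\ Dirichlet) eigenfunction $u$ of $\Delta^d$ with eigenvalue $\lambda \ge \lambda_{k,N}^d$ (resp.\ $\lambda \ge \lambda_{k,D}^d$) and with exactly $r_k$ nodal domains, which exists by the definition of $r_k$. Write $G^1,\ldots,G^{r_k}$ for these nodal domains. Since $u|_{G^i}$ is a nonconstant restriction of an eigenfunction of $\Delta^d$, Theorem \ref{thm:dlevelset} applies to each $G^i$ separately, yielding that $S_{G^i}$ has positive Lebesgue measure satisfying the lower bound \eqref{eq:dSmeasure}. By Fubini, the product $S_{G^1}\times\cdots\times S_{G^{r_k}}$ then has positive Lebesgue measure in $\mathbb{R}^{r_k}$.

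Second, I would fix any $(s_1,\ldots,s_{r_k}) \in S_{G^1}\times\cdots\times S_{G^{r_k}}$ and verify that $\mathcal{A}_{r_k}:=\{G^1_{s_1},\ldots,G^{r_k}_{s_{r_k}}\}$ is a valid $r_k$-packing: the sets are pairwise disjoint because the nodal domains $G^1,\ldots,G^{r_k}$ are pairwise disjoint and $G^i_{s_i}\subset G^i$, and each $G^i_{s_i}$ lies in $\mathscr{P}_N(M_0)$ (resp.\ $\mathscr{P}_D(M_0)$) directly from the definitions \eqref{eq:dSdef}--\eqref{eq:dSdefd} of $S_{G^i}$. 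Theorem \ref{thm:dlevelset} then gives $\lambda \le -\tfrac14 \mathcal{J}_N^d(G^i_{s_i})^2$ (resp.\ $\mathcal{J}_D^d$) for each $i$. Taking the maximum over $i$ on the right-hand side turns $\mathcal{J}_N^d(G^i_{s_i})$ into $\mathcal{J}_N^d(\mathcal{A}_{r_k})$ by the definition \eqref{eq:dcheegerration}, and the assumption $\lambda \ge \lambda_{k,N}^d$ (which, since eigenvalues are nonpositive, means $-\lambda \le -\lambda_{k,N}^d$) yields
\begin{align*}
\lambda_{k,N}^d \;\le\; \lambda \;\le\; -\tfrac14 \mathcal{J}_N^d(\mathcal{A}_{r_k})^2,
\end{align*}
and likewise in the Dirichlet case. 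This proves Part 2.

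Finally, Part 1 drops out by taking the infimum over $r_k$-packings on the right: because $h_{r_k,N}^d \le \mathcal{J}_N^d(\mathcal{A}_{r_k})$ from the definition \eqref{eq:dcheegn}, squaring and negating flips the direction to give $-\tfrac14 \mathcal{J}_N^d(\mathcal{A}_{r_k})^2 \le -\tfrac14 (h_{r_k,N}^d)^2$, and chaining with Part 2 gives \eqref{eq:dcheegerr}; the Dirichlet case \eqref{eq:dcheegerrd} is identical. There is no real obstacle here: the one point requiring a moment's care is the sign bookkeeping (since $\lambda \ge \lambda_{k,N}^d$ with both nonpositive translates into the correct inequality after squaring), and the verification that $\{G^i_{s_i}\}_i$ is a genuine packing in the sense of Definition \ref{def:packing}, both of which are immediate from the hypotheses built into $S_{G^i}$.
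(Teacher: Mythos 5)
Your proposal is correct and takes essentially the same route as the paper, which gives a one-line proof deferring to the dynamic level-set result (Theorem~\ref{thm:dlevelset}) and the reasoning from the static Theorem~\ref{thm:cheeger}; you have simply fleshed out that deferral into explicit steps. The disjointness, packing-membership, maximisation-over-$i$, and infimum arguments you spell out are exactly what the paper intends to be inherited from the static proof.
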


\begin{proof}
    This theorem follows from Lemma \ref{thm:dlevelset}, by the reasoning in the proof of Theorem \ref{thm:cheeger}. 
\end{proof}

% Introduction to dynamic SEBA
We can also extend Proposition \ref{thm:seba} to the dynamic setting, to obtain bounds on $h_{l,N}^d$ or $h_{l,D}^d$ for $r_k \le l \le k$ in terms of thresholded functions obtained from linear combinations of the first $k$ eigenfunctions of $\Delta^d$. 

%
% Dynamic SEBA
\begin{proposition} \label{thm:dseba}
For any dynamical system $\mathcal{T}$, let $u_1,\ldots,u_k$ denote the first $k$ dynamic Neumann, resp.\ Dirichlet, eigenfunctions of $\Delta^d$ for $k \ge 1$. For any $1 \le l \le k$ and any $\alpha \in \mathbb{R}^{l\times k}$, define $f_{1,\alpha},\ldots,f_{l,\alpha}$ by $f_{i,\alpha}:=\sum_{j=1}^k \alpha_{ij}u_j$. Suppose that for some $a>0$, the functions $\tau_a(f_{1,\alpha}),\ldots,\tau_a(f_{l,\alpha})$ are nonzero and have pairwise disjoint supports. 
Then each $\tau_a(f_{i,\alpha})$ has a nodal domain $\tilde{G}^i$ such that letting $\tilde{G}^i_s$ for $s \in \range(\tau_a(f_{i,\alpha})^2|_{\tilde{G}^i})$ denote the $s$-superlevel set of $\tau_a(f_{i,\alpha})^2$ on $\tilde{G}^i$, the set 
\begin{align}
\tilde{S}_{\tilde{G}^i} := 
\Bigl\{ s \in \,&\range(\tau_a(f_{i,\alpha})^2|_{\tilde{G}^i}) : \tilde{G}^i_s \in \mathscr{P}_N(M_0), \nonumber \\
&\quad \frac{\sum_{t=0}^{t_{\max}} \||\nabla_{g_t} \Phi^{(t)}_* \tau_a(f_{i,\alpha})|\|_{L^2(\Phi^{(t)}(\tilde{G}^i);\mu_t)}^2}{|\mathrm{T}| \|\tau_a(f_{i,\alpha})\|_{L^2(\tilde{G}^i;\mu_0)}^2} \ge \frac{1}{4} \mathcal{J}_N^d(\tilde{G}^i_s)^2 \Bigr\}, \label{eq:dsebasn} 
\end{align}
resp.\ 
\begin{align}
\tilde{S}_{\tilde{G}^i} := 
\Bigl\{ s \in \,&\range(\tau_a(f_{i,\alpha})^2|_{\tilde{G}^i}) : \tilde{G}^i_s \in \mathscr{P}_D(M_0), \nonumber \\
&\quad \frac{\sum_{t=0}^{t_{\max}} \||\nabla_{g_t} \Phi^{(t)}_* \tau_a(f_{i,\alpha})|\|_{L^2(\Phi^{(t)}(\tilde{G}^i);\mu_t)}^2}{|\mathrm{T}| \|\tau_a(f_{i,\alpha})\|_{L^2(\tilde{G}^i;\mu_0)}^2} \ge \frac{1}{4} \mathcal{J}_D^d(\tilde{G}_s^i)^2 \Bigr\}, \label{eq:dsebasd}
\end{align}
has positive measure and satisfies {
\begin{align}
    \Leb(\tilde{S}_{\tilde{G}^i}) \ge \frac{\|\overline{\tilde{h}_i^d} - \tilde{\mathrm{h}}_i^d\|_{L^1\mleft(\range\mleft(\tau_a(f_{i,\alpha})^2|_{\tilde{G}^i}\mright);\tilde{\mathbb{P}}_i\mright)} \|\tau_a(f_{i,\alpha})\|_{L^2(\tilde{G}^i;\mu_0)}^2}{2\mleft(\overline{\tilde{h}_i^d} - \inf_{s \in \smash{\range\mleft(\tau_a(f_{i,\alpha})^2|_{\tilde{G}^i}\mright)}} \tilde{\mathrm{h}}^d_i(s) \mright) \mu_0(\tilde{G}^i)}, \label{eq:dseba-Smeasure}
\end{align}
where $\overline{\tilde{h}_i^d}$, $\tilde{\mathrm{h}}_i^d$ and $\tilde{\mathbb{P}}_i^d$ are defined in the proof below}. Moreover, for each $\{s_1,\ldots,s_l\} \in \tilde{S}_{\tilde{G}^1} \times \ldots \times \tilde{S}_{\tilde{G}^l}$, the collection $\mathcal{A}_l:=\{\tilde{G}_{s_1}^1,\ldots,\tilde{G}_{s_l}^l\}$ is a Neumann $l$-packing for $M_0$ satisfying 
\begin{align}
    \lambda_{k,N}^d 
    \le -\frac{1}{4} \mathcal{J}_N^d(\mathcal{A}_l)^2 \max_{1 \le j \le l} \frac{\|\tau_a(f_{j,\alpha}) \|_{L^2(M_0;\mu_0)}^2}{\|f_{j,\alpha}\|_{L^2(M_0;\mu_0)}^2} 
    \le -\frac{1}{4} (h_{l,N}^d)^2                                \max_{1 \le j \le l} \frac{\|\tau_a(f_{j,\alpha}) \|_{L^2(M_0;\mu_0)}^2}{\|f_{j,\alpha}\|_{L^2(M_0;\mu_0)}^2}, \label{eq:dsebacheeger}
\end{align}
resp.\ a Dirichlet $l$-packing for $M_0$ satisfying 
\begin{align}
    \lambda_{k,D}^d 
    \le -\frac{1}{4} \mathcal{J}_D^d(\mathcal{A}_l)^2 \max_{1 \le j \le l} \frac{\|\tau_a(f_{j,\alpha}) \|_{L^2(M_0;\mu_0)}^2}{\|f_{j,\alpha}\|_{L^2(M_0;\mu_0)}^2} 
    \le -\frac{1}{4} (h_{l,D}^d)^2                                \max_{1 \le j \le l} \frac{\|\tau_a(f_{j,\alpha}) \|_{L^2(M_0;\mu_0)}^2}{\|f_{j,\alpha}\|_{L^2(M_0;\mu_0)}^2}.
\end{align}
\end{proposition}
% Proof of dynamic SEBA
\begin{proof}
This result follows by the reasoning for Proposition \ref{thm:seba} and Lemma \ref{thm:dlevelset}. As in those proofs, we consider only the Neumann case. 
For each $1 \le i \le l$, we select $\tilde{G}^i$ by $\tilde{G}^i:= 
\argmin_{\tilde{G}} \frac{\sum_{t=0}^{t_{\max}} \||\nabla_{g_t} \Phi^{(t)}_* \tau_a(f_{i,\alpha})|\|_{L^2(\Phi^{(t)}(\tilde{G});\mu_t)}^2}{|\mathrm{T}| \|\tau_a(f_{i,\alpha})\|_{L^2(\tilde{G};\mu_0)}^2}$, where the infimum is taken over nodal domains $\tilde{G}$ of $\tau_a(f_{i,\alpha})$. Then the reasoning for Theorem \ref{thm:levelset}, modified as in the proofs of Proposition \ref{thm:seba} and Theorem \ref{thm:dlevelset}, imply that $\tilde{S}_{\tilde{G}^i}$ has positive measure. The reasoning for \eqref{eq:rayleigh-lambda-seba} extends directly to the dynamic setting, and \eqref{eq:dsebacheeger} follows as in the proof of Proposition \ref{thm:seba}. 

Now, define $\overline{\tilde{h}_i^d}:= \frac{1}{\|\tau_a(f_{i,\alpha})\|_{L^2(M_0;\mu_0)}^2} \int_{\range\mleft(\tau_a(f_{i,\alpha})^2|_{\tilde{G}^i}\mright)} \mathcal{J}_N^d(\tilde{G}_s^i) \mu_0(\tilde{G}_s^i) \dd s$ and $\tilde{\mathrm{h}}_i^d(s):=\mathcal{J}_N^d(\tilde{G}_s^i)$, and define the probability measure $\tilde{\mathbb{P}}_i$ on $\range(\tau_a(f_{i,\alpha})^2|_{\tilde{G}^i})$ by $\tilde{\mathbb{P}}_i(L):=\int_L \frac{\mu_0(\tilde{G}_s^i)}{\|\tau_a(f_{i,\alpha})\|_{L^2(\tilde{G}^i;\mu_0)}^2} \dd s$. Then the reasoning for \eqref{eq:dSmeasure} implies {\eqref{eq:dseba-Smeasure}}.
A similar bound holds in the Dirichlet case, replacing $\mathcal{J}_N^d$ with $\mathcal{J}_D^d$ in the definitions of $\overline{\tilde{h}_i^d}, \tilde{\mathrm{h}}_i^d, \tilde{\mathbb{P}}_i$. 
\end{proof}

%%%%%%%%    Editing warning filter   %%%%%%
% \WarningFilter[editing]{latex}{Reference `}
%%%%%%%%                             %%%%%%

%%%%%%%%%%%%%%%%%%%%%%%%%%%%%%%%%%%%%%%%%%%%%%%%%%%%%%%%%%%%%%%%%%
%%%%%%%%%%%%%%%%%%%%%%%%%%%%%%%%%%%%%%%%%%%%%%%%%%%%%%%%%%%%%%%%%%
%%%%%%%%%%%%%%%%%%%%%%%%%%%%%%%%%%%%%%%%%%%%%%%%%%%%%%%%%%%%%%%%%%
%%%%%%%%%%%%%%%%%%%%%%%%%%%%%%%%%%%%%%%%%%%%%%%%%%%%%%%%%%%%%%%%%%
%%%%%%%%%%%%%%%%%%%%%%%%%%%%%%%%%%%%%%%%%%%%%%%%%%%%%%%%%%%%%%%%%%
%%%%%%%%%%%%%%%%%%%%%%%%%%%%%%%%%%%%%%%%%%%%%%%%%%%%%%%%%%%%%%%%%%

%%%%%%%
\section{Examples} \label{sec:cheeger-examples}
%%%%
We apply our higher Cheeger inequality (Theorem \ref{thm:cheeger}) to compare the Laplace-Beltrami eigenvalues to the higher Cheeger constants, on three manifolds: a torus (example \ref{sec:torus}), a cylinder using Neumann boundary conditions (example \ref{sec:cylinder}) and a 3-ball using Dirichlet boundary conditions (example \ref{sec:3ball}). 
Our Theorem \ref{thm:cheeger} applies to manifolds with or without boundary, whenever we know the number of nodal domains in some eigenfunctions on those manifolds, i.e.\ to each of examples \ref{sec:torus}--\ref{sec:3ball}. Miclo's existing higher Cheeger inequalities (Theorems \ref{thm:miclocheeger} and \ref{thm:miclorcheeger}) apply only to manifold without boundary, i.e.\ to example \ref{sec:torus}. For that example, we obtain an asymptotically stronger bound on $h_{k,\emptyset}$ using our Theorem \ref{thm:cheeger} than using Miclo's Theorems \ref{thm:miclocheeger} and \ref{thm:miclorcheeger}. 
Using our higher dynamic Cheeger inequality (Theorem \ref{thm:dcheeger}), we also compare the dynamic Laplacian eigenvalues to the dynamic Cheeger constants for one dynamical system, a cylinder with linear shear (example \ref{sec:shear}). 

%%%%%%%
\subsection{Cheeger constants on a torus} \label{sec:torus}
%%
%\flag{This section compares our Theorem \ref{thm:cheeger} to the existing Theorems \ref{thm:miclocheeger} and \ref{thm:miclorcheeger}}

Our first example is a flat torus $\mathbb{T}^2:=2\pi \mathbb{S}^1 \times 2\pi \mathbb{S}^1$, endowed with two-dimensional Lebesgue measure. Then $\Delta$ has an orthogonal Hilbert basis of eigenfunctions on $L^2(\mathbb{T}^2,\mathrm{Leb}^{{2}})$, consisting of all functions of the form 
\begin{align}
    u_{k_1,k_2,\zeta_1,\zeta_2}(x,y)&:=\cos(k_1(x+\zeta_1))\cos(k_2(y+\zeta_2)), \label{eq:torus-efunc}
\end{align}
for $k_1,k_2=0,1,2,\ldots$ and $\zeta_1,\zeta_2 \in \{0,\frac{\pi}{2}\}$, where we require $\zeta_1=0$ if $k_1=0$ and $\zeta_2=0$ if $k_2=0$ to ensure an orthogonal basis. Each eigenfunction $u_{k_1,k_2,\zeta_1,\zeta_2}$ has corresponding eigenvalue $\lambda_{k_1,k_2,\zeta_1,\zeta_2}=-k_1^2 - k_2^2$, and we can globally order these eigenfunctions in order of decreasing eigenvalue (resolving ties arbitrarily). 

% Cheeger general case for torus
To apply Theorem \ref{thm:cheeger}, we need to estimate the maximal number $r_k$ of nodal domains of an eigenfunction with eigenvalue greater than or equal to the $k$th eigenvalue $\lambda_{k,\emptyset}$. Each eigenfunction $u_{k_1,k_2,\zeta_1,\zeta_2}$ has $\max\{4k_1k_2,2k_1,2k_2,1\}$ nodal domains, by \eqref{eq:torus-efunc}. It can be shown that for each $k_1\ge 1$ and $\zeta_1,\zeta_2 \in \{0,\frac{\pi}{2}\}$, any eigenfunction whose eigenvalue is greater than or equal to $\lambda_{k_1,k_1,\zeta_1,\zeta_2}$ has at most $4k_1^2$ nodal domains. In this sense, the eigenfunctions $u_{k_1,k_1,\zeta_1,\zeta_2}$ maximise the number of nodal domains of an eigenfunction under an eigenvalue constraint. Thus, noting that $\lambda_{6,\emptyset}=\lambda_{1,1,\zeta_1,\zeta_2}$, we can obtain a lower bound on $r_k$ for any $k \ge 6$ by finding the largest $k_1$ such that $\lambda_{k,\emptyset} \le \lambda_{k_1,k_1,\zeta_1,\zeta_2}$ for some $\zeta_1,\zeta_2 \in \{0,\frac{\pi}{2}\}$, and noting that $r_k$ is bounded below by the number of {nodal domains} in $u_{k_1,k_2,\zeta_1,\zeta_2}$. 
To estimate this $k_1$ in terms of $k$, we note that $\lambda_{k,\emptyset} \ge \lambda_{k_1+1,k_1+1,\zeta_1,\zeta_2}=-2(k_1+1)^2$. 
% By the ordering of the $\lambda_{i,\emptyset}$, this implies there are at least $k$ eigenvalues $\lambda_{i_1,i_2,\zeta_1,\zeta_2}$ (counted with multiplicity) satisfying $\lambda_{i_1,i_2,\zeta_1,\zeta_2} \ge -2(k_1+1)^2$. 
Now, each integer pair in $\mathcal{I}:=\{(i_1,i_2)\in \mathbb{Z}^2 : i_1,i_2 \ge 1,-i_1^2-i_2^2 \ge -2(k_1+1)^2\}$ corresponds to a unit-area square contained entirely in the nonnegative quadrant $Q$ of the disk $\{-x^2-y^2\ge -2(k_1+1)^2\}$. The quadrant $Q$ has area $\frac{\pi}{2}(k_1+1)^2$, so we have $|\mathcal{I}| \le \frac{\pi}{2} (k_1+1)^2$.  
Each integer pair in $\mathcal{I}$ corresponds to 4 linearly independent eigenfunctions of the form \eqref{eq:torus-efunc} with different choices of $\zeta_1,\zeta_2 \in \{0,\frac{\pi}{2}\}$, leading to at most $2\pi(k_1+1)^2$ eigenvalues, counted with multiplicity, greater than or equal to $\lambda_{k_1+1,k_1+1,\zeta_1,\zeta_2}$. 

There are also $2\lfloor \sqrt{2}(k_1+1)\rfloor + 1$ integer pairs in $\mathcal{I}':=\{(i_1,i_2) \in \mathbb{Z}^2 : i_1,i_2 \ge 0,i_1i_2=0,-i_1^2-i_2^2 \ge -2(k_1+1)^2\}$. 
% \mathcal{I}' := \{(0,0)\}\cup\{(i_1,0):i_1 \ge 1\} \cup \{(0,i_2) : i_2 \ge 1\}$, such that $-i_1^2-i_2^2 \ge -2(k_1+1)^2$. 
Each such integer pair with $i_1\ge 1$ or $i_2 \ge 1$ corresponds to 2 linearly independent eigenfunctions of the form \eqref{eq:torus-efunc} with different choices of $\zeta_1 \in \{0,\frac{\pi}{2}\}$ or $\zeta_2 \in \{0,\frac{\pi}{2}\}$ respectively, while the pair $(0,0)$ corresponds to only 1 eigenfunction. This leads to an additionally $4\lfloor \sqrt{2}(k_1+1)\rfloor + 1$ additional eigenvalues greater than or equal to $\lambda_{k_1+1,k_1+1,\zeta_1,\zeta_2}$. In total, we have at most $2\pi(k_1+1)^2+4\sqrt{2}(k_1+1)+1$ eigenvalues greater than or equal to $\lambda_{k_1+1,k_1+1,\zeta_1,\zeta_2}$. The ordering of the eigenvalues $\lambda_{i,\emptyset}$ implies there are at least $k$ eigenvalues greater than or equal to $\lambda_{k_1+1,k_1+1,\zeta_1,\zeta_2}$, so $k \le 2\pi(k_1+1)^2+4\sqrt{2}(k_1+1)+1$. Applying the quadratic formula and noting $\sqrt{\pi k + 4 - \pi} \ge \sqrt{\pi k}$ yields the bound $k_1 \ge \sqrt{\frac{k}{2\pi}}-1-\frac{\sqrt{2}}{\pi}$. Now, $u_{k_1,k_1,\zeta_1,\zeta_2}$ has $4k_1^2$ nodal domains, so this bound on $k_1$ and the fact $k_1 \ge 1$ imply $r_k \ge 4k_1^2 \ge \max\mleft\{ \frac{2k}{\pi}-4.7\sqrt{k}+8.4,4 \mright\}\approx 0.64k+O(k)$. {For comparison, Pleijel's nodal domain theorem (e.g.\ \cite[Theorem 1.2]{lena}) yields $\limsup_{k \to \infty} \frac{r_k}{k} \le \frac{4}{j_{0,1}^2}\approx 0.69$ where $j_{0,1}$ denotes the first zero of the zeroth Bessel function of the first kind. That is, these eigenfunctions $u_{k_1,k_1,\zeta_1,\zeta_2}$ have close to the maximum possible number of nodal domains for their position in the eigenfunction order, asymptotically for large $k$.
%among all two-dimensional Riemannian manifolds. 
Thus, Theorem \ref{thm:cheeger} is well suited to this manifold.} Theorem \ref{thm:cheeger} implies 
\begin{align}
    \lambda_{k,\emptyset} \le -\frac{1}{4} h_{r_k,\emptyset}^2 \le -\frac{1}{4} h_{\max \mleft\{ \mleft \lceil \frac{2k}{\pi}-4.7\sqrt{k}+8.4\mright \rceil, 4 \mright\},\emptyset}^2. \label{eq:torus-ndcheegerm}
\end{align}

To compare \eqref{eq:torus-ndcheegerm} to the bounds from Miclo's Theorems \ref{thm:miclocheeger} and \eqref{eq:miclorcheeger}, we rewrite the outer inequality of \eqref{eq:torus-ndcheegerm} as a bound on $h_{l,\emptyset}$ for $l \ge 1$, and use Weyl's law. Let $k^*(l):=\lceil \frac{\pi l}{2} + 9.3\sqrt{l+0.3}+14.2 \rceil$, then we can rearrange \eqref{eq:torus-ndcheegerm} to obtain
\begin{align}
    h_{l,\emptyset} \le 2 \sqrt{-\lambda_{k^*(l),\emptyset}}. \label{eq:torus-ndcheegerl}
\end{align}
Now, from Weyl's law (see e.g.\ \cite[p.118]{lablee}), it follows that 
\begin{align}
    \lambda_{k,\emptyset}=-\frac{k}{\pi}+O(\sqrt{k}). \label{eq:weyl}
\end{align}
This allows us to compare our bound \eqref{eq:torus-ndcheegerl} with the bounds obtained from Miclo's Theorems \ref{thm:miclocheeger} and \ref{thm:miclorcheeger}. 
\begin{itemize}
\item % Our bound
Substituting \eqref{eq:weyl} and the definition of $k^*(l)$ into our bound \eqref{eq:torus-ndcheegerl}, we obtain that as $l \to \infty$, 
\begin{align}
    h_{l,\emptyset} \le 2\sqrt{\frac{l}{2}+O(\sqrt{l})}=2\sqrt{\frac{l}{2}}+O(1). \label{eq:torus-ndweyl}
\end{align}
\item % Miclo Cheeger for torus
Substituting \eqref{eq:weyl} into Miclo's Theorem \ref{thm:miclocheeger} \cite[Theorem 7]{miclo15}, the reasoning from \eqref{eq:torus-ndweyl} implies that as $l \to \infty$, 
\begin{align}
    h_{l,\emptyset} \le l^3 \sqrt{-\frac{\lambda_{l,\emptyset}}{\hat{\eta}}}=l^3\sqrt{\frac{l}{\pi \hat{\eta}}}+O(l^3). \label{eq:torus-micloweyl}
\end{align}
This is clearly asymptotically weaker than \eqref{eq:torus-ndweyl}.
\item % Miclo k(1-\delta) Cheeger for torus
Substituting \eqref{eq:weyl} into Miclo's Theorem \ref{thm:miclorcheeger} \cite[Theorem 13]{miclo15}, the reasoning from \ref{eq:torus-ndweyl} implies that as $l \to \infty$, 
\begin{align}
    h_{l,\emptyset} \le \sqrt{-\frac{\log(2l+1)\lambda_{2l,\emptyset}}{\eta}}=\sqrt{\frac{2l\log(2l+1)}{\pi \eta}}+O(\sqrt{\log(2l+1)}). \label{eq:torus-miclorweyl}
\end{align}
This is also asymptotically weaker than \eqref{eq:torus-ndweyl}. 
\end{itemize}

{As a partial converse to Theorem \ref{thm:cheeger}, the higher Buser inequality \cite[Theorem 4.1]{liu} states 
\begin{align}
    h_{l,\emptyset} \ge \frac{1}{57l} \sqrt{\lambda_{l,\emptyset}} \label{eq:hbuser}
\end{align}
on $\mathbb{T}^2$. Substituting \eqref{eq:weyl} into \eqref{eq:hbuser} yields 
\begin{align}
    h_{l,\emptyset} \ge \frac{\sqrt{\frac{l}{\pi}+O(\sqrt{l})}}{57l}=\frac{1}{57\sqrt{\pi l}}+O\mleft(\frac{1}{l}\mright).
\end{align}
}
\subsection{Cheeger constants of a cylinder} \label{sec:cylinder}
%%%%%%%%%%%%%%%%
%%%%%%%%%%%%%%%%

%\flag{This example demonstrates Theorems \ref{thm:cheeger} on a semiconvex manifold. There are no existing higher Cheeger inequalities for manifolds with boundary.} 

% Cylinder introduction
Next, we consider a cylinder $\mathcal{C}:=2\pi \mathbb{S}^1 \times [0,\pi]$, endowed with two-dimensional Lebesgue measure. Then $\mathcal{C}$ is a semiconvex subset of the torus $\mathbb{T}^2$ from example \ref{sec:torus}, but $\mathcal{C}$ is not a convex subset of any manifold since some pairs of points in $\mathcal{C}$ are connected by two minimal geodesics contained in $\mathcal{C}$. Under Neumann boundary conditions, $\Delta$ has an orthogonal Hilbert basis of eigenfunctions on $L^2(\mathcal{C},\mathrm{Leb}^{{2}})$, consisting of all functions of the form 
\begin{align}
    u_{k_1,k_2,\zeta}(x,y)&:= \cos(k_1 (x + \zeta))\cos(k_2 y), \label{eq:cylinder-efunc}
\end{align}
for $k_1,k_2=0,1,2,\ldots$ and $\zeta \in \{0,\frac{\pi}{2}\}$, where we require $\zeta=0$ whenever $k_1=0$ to ensure an orthogonal basis. 
Each eigenfunction $u_{k_1,k_2,\zeta}$ has corresponding eigenvalue $\lambda_{k_1,k_2,\zeta}=-k_1^2 - k_2^2$. 
% Cylinder general-case Cheeger
To apply Theorem \ref{thm:cheeger}, we again need a lower bound for $r_k$. %First, we show that for each $k_1 \ge 1$, the eigenfunctions $u_{k_1,k_1,\zeta}$ for $\zeta \in \{0,\frac{\pi}{2}\}$ have a maximal number of nodal domains under an eigenvalue constraint. 
% Eigenfunctions maximise nodal domains
First, we show that for each $k_1 \ge 1$, eigenfunctions of the form $u_{k_1,k_1,\zeta}$  have the maximal number of nodal domains, among eigenfunctions of the form \eqref{eq:cylinder-efunc} for which $\lambda_{i_1,i_2,\zeta}\ge -2k_1^2$. Each $u_{i_1,i_2,\zeta}$ has $(i_2+1)\max\{2i_1,1\}$ nodal domains by \eqref{eq:cylinder-efunc}, so maximising the number of nodal domains in $u_{i_1,i_2,\zeta}$ subject to $\lambda_{i_1,i_2,\zeta} \, (=-i_1^2 - i_2^2) \ge -2k_1^2$ is equivalent to solving $\max\{2i_1(i_2+1) : (i_1,i_2) \in \mathbb{Z}^2_{\ge 0}, i_1^2+i_2^2 \le 2k_1^2\}$. This can be solved via the relaxation $\max \{2x(y+1) : (x,y) \in ([0,k_1] \cup [{k_1+1},\infty)) \times \mathbb{R}_{\ge 0}, x^2+y^2 \le 2k_1^2\}$. Rearranging the constraint $x^2+y^2 \le 2k_1^2$ and maximising $y$ gives us $y=\sqrt{2k_1^2-x^2}$. Substituting this into $2x(y+1)$ gives us $2x(\sqrt{2k_1^2-x^2}+1)$, which is strictly increasing for $0 \le x \le k_1$ and strictly decreasing for $k_1+1 \le x \le \sqrt{2}k_1$. Thus, since the objective is larger at $(x,y)=(k_1,k_1)$ than at $(x,y)=(k_1+1,\sqrt{k_1^2-2k_1-1})$, the maximum is uniquely attained at $(x,y)=(k_1,k_1)$. Hence the eigenfunctions $u_{k_1,k_1,\zeta}$ for $\zeta \in \{0,\frac{\pi}{2}\}$ maximise the number of nodal domains, among eigenfunctions $u_{i_1,i_2,\zeta}$ of the form \eqref{eq:cylinder-efunc} satisfying $\lambda_{i_1,i_2,\zeta} \ge -2k_1^2$. 

% Cheeger bound
Now, we bound $r_k$ for each $k \ge 5$ by finding the largest $k_1$ such that $\lambda_{k,N} \le \lambda_{k_1,k_1,\zeta}$ for $\zeta \in \{0,\frac{\pi}{2}\}$, noting that $\lambda_{5,N}=\lambda_{1,1,\zeta}$. % , and taking the number of nodal domains in $u_{i_1,i_2,\zeta}$ as a lower bound on $r_k$. 
% Now, for each $k \ge 5$, we can bound the largest $k_1$ such that $\lambda_{k_1,k_1,\zeta}\ge \lambda_{k,N}$. 
For this $k_1$, we have $\lambda_{k,N} \ge \lambda_{k_1+1,k_1+1,\zeta}=-2(k_1+1)^2$. 
Each integer pair in the set $\mathcal{I}$ from the previous example corresponds to two linearly independent eigenfunctions of the form \eqref{eq:cylinder-efunc}, leading to at most $\lfloor \pi(k_1+1)^2\rfloor$ eigenvalues $\ge \lambda_{k_1+1,k_1+1,\zeta}$. There are also $\lfloor \sqrt{2}(k_1+1) \rfloor$ nonnegative integer pairs in $\mathcal{I}'$ from the previous example with $i_1>0$, each corresponding to 2 linearly independent eigenfunctions, and $\lfloor \sqrt{2}(k_1+1)\rfloor+1$ such pairs with $i_1=0$, each corresponding to only 1 linearly independent eigenfunction. These lead to at most an additional $3\sqrt{2}(k_1+1)+1$ eigenvalues $\ge \lambda_{k_1+1,k_1+1,\zeta}$. Thus, there are at most $\pi(k_1+1)^2+3\sqrt{2}(k_1+1)+1$ eigenvalues $\ge \lambda_{k_1+1,k_1+1,\zeta}$. 
Again, the ordering of the $\lambda_{i,\emptyset}$ implies there are at least $k$ eigenvalues $ge \lambda_{k_1+1,k_1+1,\zeta}$, so $k \le \pi(k_1+1)^2 + 3\lfloor \sqrt{2} (k_1+1)\rfloor + 1$. Then the quadratic formula and the fact $\sqrt{4\pi k + 18 - 4\pi} \ge \sqrt{4\pi k}$ yield $k_1 \ge \sqrt{\frac{k}{\pi}}-1-\frac{3}{\sqrt{2}\pi}$. Now, $u_{k_1,k_1,\zeta}$ has $2k_1(k_1+1)$ nodal domains, so this bound on $k_1$ and the fact $k_1 \ge 1$ imply $r_k \ge 2k_1(k_1+1) \ge \max \mleft\{ \frac{2k}{\pi}-2.7\sqrt{k}+2.2, 4 \mright\}$. Thus, Theorem \ref{thm:cheeger} implies that for $k \ge 5$, 
\begin{align}
    \lambda_{k,N} \le -\frac{1}{4} h_{r_k,N}^2 \le -\frac{1}{4} h_{\max \mleft\{ \mleft \lceil \frac{2k}{\pi}-2.7\sqrt{k}+2.2\mright \rceil, 4 \mright\}, N}^2. \label{eq:cylinder-ndcheegerm}
\end{align}
Note that we cannot apply Miclo's Theorems \ref{thm:miclocheeger} or \ref{thm:miclorcheeger} to $\mathcal{C}$, because $\mathcal{C}$ has nonempty boundary. 

%%%%%%%
\subsection{Cheeger constants on a 3-ball} \label{sec:3ball}
%%%
%\flag{This example demonstrates Theorem \ref{thm:cheeger} on a 3-dimensional manifold. Again, there are no existing higher Cheeger inequalities for manifolds with boundary.}

Next, we consider the 3-ball $\mathbb{B}:=\{\mathbf{x} \in \mathbb{R}^3 : |\mathbf{x}|\le 1\}$, equipped with 3-dimensional Lebesgue measure. We work in spherical coordinates $(r,\theta,\phi)$, where $\theta$ is the polar angle and $\phi$ is the azimuthal angle. Then $\Delta$, under Dirichlet boundary conditions, has an orthogonal Hilbert basis of eigenfunctions on $L^2(\mathbb{B},\mathrm{Leb}^{{3}})$, consisting of all functions of the form 
\begin{align}
    u_{k_1,k_2,k_3,\zeta}&:=S_{k_2} \mleft(\alpha_{k_1,k_2} r \mright) P^{k_3}_{k_2} (\cos \theta) \cos(k_3(\phi+\zeta)) \label{eq:3ball-efunc}
    %& u_{k,k_3,k_1,N}&:=S_k \mleft( \frac{\alpha_{k,k_1,N}}{R} r \mright) P^{k_3}_k (\cos \theta) \sin(k_3\theta) \\
    %& u'_{k,k_3,k_1,N}&:=S_k \mleft( \frac{\alpha_{k,k_1,N}}{R} r \mright) P^{k_3}_k (\cos \theta) \cos(k_3\theta),
\end{align}
for $k_1=1,2,\ldots$; $k_2=0,1,\ldots$; $k_3=0,\ldots,k_2$; $\zeta\in\{0,\frac{\pi}{2}\}$, where we require $\zeta=0$ when $k_3=0$ to ensure an orthonormal basis. The function $S_{k_2}:\mathbb{R}_+ \to \mathbb{R}$ is the $k_2$th \emph{spherical Bessel function of the first kind}, $\alpha_{k_1,k_2}$ is the $k_1$th positive zero of $S_{k_2}$, and $P^{k_3}_{k_2}$ is the $k_2$th \emph{associated Legendre polynomial of $k_3$th order} (see e.g.\ \cite[sec.\ 3.3]{grebenkov-nguyen} and \cite[secs\ V.8 and VII.5]{courant}). The eigenfunction $u_{k_1,k_2,k_3,\zeta}$ has eigenvalue $\lambda_{k_1,k_2,k_3,\zeta}=-\alpha_{k_1,k_2}^2$. The values $\alpha_{k_1,k_2}$ satisfy the bounds (simplified from \cite[equations (1),\ (2),\ (5)]{breen})
\begin{align}% 4.3
    \pi k_1 + k_2 - 3.75 < \alpha_{k_1,k_2} < \pi k_1 + \frac{\pi}{2}k_2 + 0.03-\frac{(k_2+\frac{1}{2})^2}{2\mleft(\pi k_1 + \frac{\pi}{2}k_2 + 0.03\mright)}. \label{eq:besselzerobound}
\end{align}

% 3-ball Cheeger
To apply our Theorem \ref{thm:cheeger}, we first obtain a lower bound on $r_k$. The function $P_{k_2}^{k_3}(\cos \theta)\cos(k_3(\phi+\zeta))$ has $(k_2-k_3+1)\max\{2k_3,1\}$ nodal domains (see e.g.\ \cite[p.302]{leydold}), while the function $S_{k_2}(\alpha_{k_1,k_2} r)$ has $k_1$ nodal domains since $\alpha_{k_1,k_2}$ is the $k_1$th positive zero of $S_{k_2}$. Thus, the eigenfunction $u_{k_1,k_2,k_3,\zeta}$ has $k_1(k_2-k_3+1)\max\{2k_3,1\}$ nodal domains. In particular, $u_{k_1,4k_1-1,2k_1,\zeta}$ for $k_1=1,2,\ldots$, $\zeta \in \{0,\frac{\pi}{2}\}$, has $8k_1^3$ nodal domains, i.e.\ it is a simple eigenfunction with a relatively high number of nodal domains for its eigenvalue. It can be shown using the second inequality in \eqref{eq:besselzerobound} that with $c:=3\pi-\frac{8}{3\pi}$, 
\begin{align}
    \lambda_{k_1,4k_1-1,2k_1,\zeta} = -\alpha_{k_1,4k_1-1}^2 \ge -(c k_1-1.46)^2. \label{eq:3ball-lambda}
\end{align}
Thus, for each $k \ge 18$, we can obtain a lower bound on $r_k$ by finding the largest $k_1$ such that 
\begin{align}
    -(c k_1-1.46)^2 \ge \lambda_{k,D}, \label{eq:3ball-k1def}
\end{align}
since we can confirm numerically that $\lambda_{17,D} \ge -(c-1.46)^2 \ge \lambda_{18,D}$. For this $k_1$, we have $\lambda_{k,D} \ge -(c (k_1+1) - 1.46)^2$. By the first inequality in equation \eqref{eq:besselzerobound}, we have $\lambda_{i_1,i_2,i_3,\zeta} \ge -(c (k_1+1)-1.46)^2$ only for $(i_1,i_2,i_3,\zeta) \in \mathcal{I}:=\{(i_1,i_2,i_3,\zeta) : \pi i_1 + i_2 - 3.75 \le c (k_1+1)-1.46\}$. There are $2i_2+1$ tuples $(i_1,i_2,i_3,\zeta) \in \mathcal{I}$ for each pair $i_1,i_2$ such that $\pi i_1 + i_2 \le c(k_1+1)+2.29$. Using the formula for sums of squares, and writing $a:=c(k_1+1)+2.29$ for clarity, the cardinality of $\mathcal{I}$ is bounded by 
\begin{align}
    |\mathcal{I}| &= \sum_{i_1=1}^{\mleft\lfloor \frac{a}{\pi} \mright\rfloor} \sum_{i_2=0}^{\lfloor a - \pi i_1 \rfloor} (2i_2+1) 
    = \sum_{i_1=1}^{\mleft\lfloor \frac{a}{\pi} \mright\rfloor} (\lfloor a - \pi i_1 \rfloor + 1)^2 
    = \sum_{i_1=1}^{\mleft\lfloor \frac{a}{\pi} \mright\rfloor} \mleft( \mleft\lfloor a - \pi \mleft(\mleft\lfloor \frac{a}{\pi} \mright\rfloor + 1 - i_1 \mright) \mright\rfloor + 1 \mright)^2 \nonumber \\
    &\le \sum_{i_1=1}^{\mleft\lfloor \frac{a}{\pi} \mright\rfloor} (\lfloor \pi i_1 \rfloor + 1)^2 
    \le \frac{a^3}{3\pi}+\mleft(\frac{1}{2}+\frac{1}{\pi}\mright) a^2 + \mleft(1+\frac{\pi}{6}+\frac{1}{\pi} \mright) a 
    \le \mleft ( \frac{c}{\sqrt[3]{3\pi}} k_1 + 6.4 \mright)^3. 
\end{align}
Every tuple in $\mathcal{I}$ corresponds to at most one eigenvalue $\lambda_{i_1,i_2,i_3,\zeta}$ satisfying $\lambda_{i_1,i_2,i_3,\zeta} \ge -(c (k_1+1)-1.46)^2$, so there are at most $\mleft( \frac{c}{\sqrt[3]{3\pi}} k_1 + 6.4 \mright)^3$ such eigenvalues. Hence $k \le \mleft( \frac{c}{\sqrt[3]{3\pi}} k_1 + 6.4 \mright)^3$, so 
\begin{align}
    k_1 \ge \max \mleft\{ \frac{\sqrt[3]{3\pi}}{c} (\sqrt[3]{k}-6.4),1 \mright\}. \label{eq:3ball-k1bound}
\end{align}
Now, equations \eqref{eq:3ball-lambda} and \eqref{eq:3ball-k1def} imply $\lambda_{k,D} \le \lambda_{k_1,4k_1-1,2k_1,\zeta}$, for $\zeta \in \{0,\frac{\pi}{2}\}$. Thus, since $u_{k_1,4k_1-1,2k_1}$ has $8k_1^3$ nodal domains, \eqref{eq:3ball-k1bound} and the fact $k_1 \ge 1$ imply $r_k \ge 8k_1^3 \ge \max\Bigl\{\frac{24\pi}{c^3} (\sqrt[3]{k} - 6.4) )^3,8\Bigr\}\ge \max\{0.119 (\sqrt[3]{k}-6.4)^3,8\}$. {In this case, Pleijel's nodal domain theorem  (e.g.\ \cite[Theorem 1.2]{lena}) yields $\limsup_{k\to \infty} \frac{r_k}{k} \le \frac{9\pi}{2j_{0.5,1}^3} \approx 0.46$, i.e.\ these eigenfunctions with $\frac{r_k}{k}\ge 0.119+O(k^{-\frac{1}{3}})$ have within a factor of 4 of the maximum number of nodal domains possible for their position in the eigenfunction order.} 
%among dimension-three Riemannian manifolds.} 
Hence Theorem \ref{thm:cheeger} implies 
\begin{align}
    \lambda_{k,D} \ge \frac{1}{4} h_{r_k,D}^2 \ge \frac{1}{4} h_{\max\mleft\{\mleft\lceil 0.119 (\sqrt[3]{k}-6.4)^3 \mright\rceil,8\mright\}, D}^2. \label{eq:3ball-ndcheeger}
\end{align}
As in the previous example, one cannot apply Miclo's Theorem \ref{thm:miclocheeger} or \ref{thm:miclorcheeger} in this case, because $\mathbb{B}$ has non-empty boundary.

%%%%%%%%%%%%%%%%%
%%%%%%%%%%%%%%%%%
\subsection{Dynamic Cheeger constant on a cylinder with linear shear} \label{sec:shear}
%%%%%%%%%%%%%%%%%
%%%%%%%%%%%%%%%%%

Finally, we consider a linear shear on the cylinder $\mathcal{C}:=2\pi \mathbb{S}^1 \times [0,\pi]$, similarly to \cite[example 6.1]{F15}. We consider a dynamical system $\mathcal{T}$ as in definition \ref{def:dynsys}. We let $\mathrm{T}:=\{0,1,\ldots,t_{\max}\}$ for some even $t_{\max} \ge 2$, and for each $t$, we let $M_t:=\mathcal{C}$, and we define $g_t$ as the Euclidean metric and $V_t$ as two-dimensional Lebesgue measure. For some $b>0$, we define each $\Phi^{(t)}:\mathcal{C} \to \mathcal{C}$ by
\begin{align}
    \Phi^{(t)}(x, y) := \mleft(x + b \frac{t}{t_{\max}} y \imod{2\pi}, y\mright). \label{eq:shear-phi}
\end{align}
The dynamics $\Phi^{(t)}$ represents linear shear in the $x$-coordinate on the cylinder. The functions 
% Eigenfunctions for cylinder with shear
\begin{align}
    u_{k_1,k_2,\zeta}^d(x,y)&:=\cos\mleft(k_1 \mleft(x + \zeta - \frac{b}{2} y \mright)\mright) \cos(k_2 y), \label{eq:shear-efunc}
\end{align}
for $k_1,k_2=0,1,2,\ldots$, and $\zeta \in \{0,\frac{\pi}{2}\}$, taking $\zeta=0$ whenever $k_1=0$, are a complete basis of eigenfunctions for $\Delta^d$ under dynamic Neumann boundary conditions. 
This follows since for each $t \in \mathrm{T}$, writing $\tilde{x}_t:=x+\zeta+b\mleft(\frac{t}{t_{\max}}-\frac{1}{2}\mright)y$ for brevity, we have $\Phi^{(t)}_* u_{k_1,k_2,\zeta}^d(x,y)=\cos\mleft(k_1 \tilde{x}_t \mright)\cos(k_2y)$, so
\begin{align*}
    &\Delta\Phi^{(t)}_* u_{k_1,k_2,\zeta}^d(x,y) \\
    &{= -\frac{\partial}{\partial x}} \mleft[k_1 \sin\mleft(k_1 \tilde{x}_t \mright)\cos(k_2y) \mright] 
    - \frac{\partial}{\partial y} \Bigl[ k_1 b \mleft(\tfrac{t}{t_{\max}}-\tfrac{1}{2}\mright) \sin\mleft( k_1 \tilde{x}_t \mright) \cos(k_2 y) %\\
    %&\quad\quad 
+ k_2 \cos \mleft(k_1 \tilde{x}_t\mright)\sin(k_2y)\Bigr] \\
    &{= -\mleft(k_1^2 \mleft(1+b^2\mleft(\tfrac{t}{t_{\max}}-\tfrac{1}{2}\mright)^2\mright)+k_2^2\mright) \Phi^{(t)}_* u_{k_1,k_2,\zeta}^d(x,y)} 
    + 2k_1k_2b\mleft(\tfrac{t}{t_{\max}}-\tfrac{1}{2}\mright) \sin\mleft(k_1 \tilde{x}_t \mright) \sin(k_2 y).
\end{align*}
Then, since $\sum_{t=0}^{t_{\max}} \mleft(\frac{t}{t_{\max}}-\frac{1}{2}\mright)=0$ and 
$\sum_{t=0}^{t_{\max}} \mleft(\frac{t}{t_{\max}}-\frac{1}{2}\mright)^2
=\frac{(t_{\max}+1)(t_{\max}+2)}{12 t_{\max}}
=\frac{|\mathrm{T}|(|\mathrm{T}|+1)}{12(|\mathrm{T}|-1)}$, 
we have 
% Example cont'd: Prove the eigenfunctions are eigenfunctions
\begin{align}
    &\Delta^d u_{k_1,k_2,\zeta}(x,y) \nonumber \\
    &{=-\tfrac{1}{|\mathrm{T}|}} \sum_{t=0}^{t_{\max}} 
    \biggl[\mleft(k_1^2 \mleft(1 + b^2 \mleft(\tfrac{t}{t_{\max}}-\tfrac{1}{2}\mright)^2 \mright) + k_2^2\mright) u_{k_1,k_2,\zeta}^d 
    + 2k_1k_2b\mleft(\tfrac{t}{t_{\max}}-\tfrac{1}{2}\mright) \sin\mleft(k_1 \tilde{x}_0 \mright) \sin(k_2 y) \biggr] \nonumber \\
    &{=-\mleft(k_1^2 \mleft(1 + \tfrac{b^2 (|\mathrm{T}|+1)}{12 (|\mathrm{T}|-1)}\mright) + k_2^2 \mright) u_{k_1,k_2,\zeta}^d}, \nonumber
\end{align} 
i.e.\ each $u_{k_1,k_2,\zeta}^d$ is an eigenfunction with eigenvalue $\lambda_{k_1,k_2,\zeta}^d := -k_1^2 \bigl(1+\frac{b^2 (|\mathrm{T}|+1)}{12 (|\mathrm{T}|-1)}\bigr)- k_2^2$. 
These eigenfunctions form a complete orthogonal Hilbert basis for $L^2(\mathcal{C},\mathrm{Leb}^{{2}})$, since for $t^*=\frac{t_{\max}}{2}$, the $L^2$-isometry $\Phi^{(t^*)}_*:L^2(\mathcal{C}) \to L^2(\mathcal{C})$ sends the functions \eqref{eq:shear-efunc} to the complete orthogonal Hilbert basis \eqref{eq:cylinder-efunc}. 

% Shear example cont'd: how I will bound $r_k$
To apply Theorem \ref{thm:dcheeger}, we need a lower bound for $r_k$ for each sufficiently large $k$. We consider $k \ge \pi pq + \sqrt{2}(p+2q)+1$, where $p \ge q \ge 1$ are integers for which $1+\frac{b^2(|\mathrm{T}|+1)}{12(|\mathcal{T}-1)}=\frac{p^2}{q^2}$. Then each eigenvalue $\lambda_{k_1,k_2,\zeta}^d$ can be written 
\begin{align}
    \lambda_{k_1,k_2,\zeta}^d = -\frac{p^2}{q^2}k_1^2 - k_2^2. \label{eq:shear-evalue}
\end{align}
% Shear example cont'd: steps I use 
We obtain our bound on $r_k$ in the following steps. 
First, we show that for $k_1\in \{q,2q,\ldots\}$, the eigenfunctions $u_{k_1,\frac{p}{q}k_1,0}^d$ and $u_{k_1,\frac{p}{q}k_1,\frac{\pi}{2}}^d$ have the maximum number of nodal domains, among eigenfunctions of the form \eqref{eq:shear-efunc} with eigenvalue $\ge -2\frac{p^2}{q^2}k_1^2$. 
Second, for each $k_1\in \{q,2q,\ldots\},$ we obtain an upper bound for 
\begin{align}
    \mathcal{E}(k_1):=\#\left\{\lambda_{i_1,i_2,\zeta}^d: \lambda_{i_1,i_2,\zeta}^d \ge -2\frac{p^2}{q^2}k_1^2\right\}, \label{eq:shear-edef}
\end{align}
the number of eigenvalues $\lambda_{i_1,i_2,\zeta}^d$ (with multiplicity) satisfying $\lambda_{i_1,i_2,\zeta}^d \ge -2\frac{p^2}{q^2}k_1^2$, and hence put an upper bound on the position of $\lambda_{k_1,\frac{p}{q}k_1,0}^d$ in the eigenvalue ordering. 
Third, we use this bound to show that for each $k \ge \pi pq + \sqrt{2}(p+2q)+1=(\pi q^2 +\sqrt{2}q)\sqrt{1+\frac{b^2(|\mathrm{T}|+1)}{12(|\mathrm{T}|-1)}}+2\sqrt{2}q+1$, there is some $k_1\in \{q,2q,\ldots\}$ such that $\lambda_{k_1,\frac{p}{q} k_1, 0}^d \ge \lambda_{k,N}^d$, and also to bound the largest such $k_1$ from below. 
Finally, for this $k$ and $k_1$, we use the number of nodal domains in $u_{k_1,\frac{p}{q}k_1,0}^d$ to give a lower bound on $r_k$, and hence we use Theorem \ref{thm:dcheeger} to bound $\lambda_{k,N}^d$ in terms of $h_{r_k,N}^d$. 

% Shear example cont'd: choose examplar eigenfunction
\emph{Step 1:} We begin by proving that $u_{k_1,\frac{p}{q}k_1,0}^d$ and $u_{k_1,\frac{p}{q}k_1,\frac{\pi}{2}}^d$ have the maximal number of nodal domains among eigenfunctions $u_{i_1,i_2,\zeta}^d$ of the form \eqref{eq:shear-efunc} for which $\lambda_{i_1,i_2,\zeta}^d \ge -2\frac{p^2}{q^2}k_1^2$. Each eigenfunction $u_{i_1,i_2,\zeta}^d$ has $\max\{2i_1,1\}(i_2+1)$ nodal domains by \eqref{eq:shear-efunc} (since $\cos\mleft(i_1\mleft(x+\zeta -\frac{b}{2}y\mright)\mright)$ has $\max\{2i_1,1\}$ nodal domains and $\cos(i_2y)$ has $i_2+1$ nodal domains). Thus, by \eqref{eq:shear-evalue}, maximising the number of nodal domains in $u_{i_1,i_2,\zeta}^d$ subject to $\lambda_{i_1,i_2,\zeta}^d \ge -2\frac{p^2}{q^2}k_1^2$ is equivalent to solving $\max\{2i_1(i_2+1):(i_1,i_2) \in \mathbb{Z}_{>0}, -\frac{p^2}{q^2}i_1^2 - i_2^2 \ge -2\frac{p^2}{q^2}k_1^2\}$. By a similar relaxation argument to section \ref{sec:cylinder}, this is uniquely maximised by $(i_1,i_2)=(k_1,\frac{p}{q}k_1)$. Hence eigenfunctions $u_{k_1,\frac{p}{q}k_1,\zeta}^d$ for $\zeta \in \{0,\frac{\pi}{2}\}$ maximise the number of nodal domains, among eigenfunctions $u_{i_1,i_2,\zeta}^d$ of the form \eqref{eq:shear-efunc} satisfying $\lambda_{i_1,i_2,\zeta}^d \ge -2\frac{p^2}{q^2}k_1^2$. 

% Shear example cont'd: count eigenvalues
\emph{Step 2:} Choose any $k_1=q,2q,\ldots$. We can bound $\mathcal{E}(k_1)$ (defined in \eqref{eq:shear-edef}) by considering three cases: eigenvalues $\lambda_{i_1,i_2,\zeta}^d$ with $i_1,i_2\ge 1$, eigenvalues $\lambda_{i_1,0,\zeta}^d$ with $i_1 \ge 1$, and eigenvalues $\lambda_{0,i_2,0}^d$ for $i_2 \ge 0$. 

The set $\{\lambda_{i_1,i_2,\zeta}:\lambda_{i_1,i_2,\zeta}\ge -2\frac{p^2}{q^2}k_1^2,i_1,i_2 \ge 1\}$ is in bijection with the set $\{(i_1,i_2,\zeta):\zeta \in \{0,\frac{\pi}{2}\},(i_1,i_2) \in \mathbb{Z}_{>0}, -\frac{p^2}{q^2}i_1^2 - i_2^2 \ge -2\frac{p^2}{q^2}k_1^2\}$, by \eqref{eq:shear-evalue}. These tuples $(i_1,i_2)$ are in bijection with the grid points $(i_1,i_2)$ in the positive quadrant $Q_{pq}$ of the ellipse $\frac{x^2}{2k_1^2}+\frac{q^2 y^2}{2p^2 k_1^2} \le 1$. The quadrant $Q_{pq}$ has area $\frac{\pi p}{2q}k_1^2$, and each grid point $(i_1,i_2) \in Q_{pq}$ with $i_1,i_2 \ge 1$ is associated with a unit area in $Q_{pq}$. Therefore, there are at most $\frac{\pi p}{2q}k_1^2$ grid points $(i_1,i_2)$, so there are at most $\frac{\pi p}{q}k_1^2$ tuples $(i_1,i_2,\zeta)$, and hence at most $\frac{\pi p}{q} k_1^2$ eigenvalues $\lambda_{i_1,i_2,\zeta}^d \ge -2\frac{p^2}{q^2}k_1^2$ with $i_1,i_2 \ge 1$. 

By \eqref{eq:shear-evalue}, the eigenvalues $\lambda_{i_1,0,\zeta}^d \ge -2\frac{p^2}{q^2}k_1^2$ with $i_1 \ge 1$ are in bijection with the tuples $(i_1,\zeta)$ with $i_1 \in \mathbb{Z} \cap [1,\sqrt{2}k_1]$ and $\zeta \in \{0,\frac{\pi}{2}\}$, so there are $2\lfloor \sqrt{2}k_1 \rfloor$ such eigenvalues. Similarly, the eigenvalues $\lambda_{0,i_2,0}^d \ge -2\frac{p^2}{q^2} k_1^2$ are in bijection with the integers $i_2 \in \mathbb{Z} \cap [0,\sqrt{2}\frac{p}{q}k_1]$, so there are $\lfloor \sqrt{2}\frac{p}{q}k_1 \rfloor+1$ such eigenvalues. Combining these three cases, the number $\mathcal{E}(k_1)$ of eigenvalues $\lambda_{i_1,i_2,\zeta}^d \ge -2\frac{p^2}{q^2}k_1^2$, counted with multiplicity, is bounded above by 
\begin{align}
    \mathcal{E}(k_1) \le \frac{\pi p}{q}k_1^2 + 2\lfloor \sqrt{2}k_1 \rfloor + \mleft\lfloor \frac{\sqrt{2}p}{q}k_1 \mright\rfloor + 1. \label{eq:shear-kbound}
\end{align}

% Shear example cont'd: bound k_1
\emph{Step 3:} 
Equations \eqref{eq:shear-evalue}-\eqref{eq:shear-edef} imply there are no more than $\mathcal{E}(1)$ eigenvalues $\ge \lambda_{q,p,0}^d$, so \eqref{eq:shear-kbound} implies there are no more than $\pi pq + \sqrt{2}(p+2q)+1$ such eigenvalues. 
Hence for each $k \ge \pi pq + \sqrt{2}(p+2q) + 1$, we have $\lambda_{q,p,0}^d \ge \lambda_{k,N}^d$, i.e.\ for $\tilde{k}_1=q$ we have $\lambda_{\tilde{k}_1,\frac{p}{q}\tilde{k}_1,0}^d \ge \lambda_{k,N}^d$. 
Define 
\begin{align}
    k_1 := \max \mleft\{\tilde{k}_1 \in \{q,2q,\ldots\} : \lambda_{\tilde{k}_1,\frac{p}{q}\tilde{k}_1,0}^d \ge \lambda_{k,N}^d \mright\}, \label{eq:shear-kdef}
\end{align}
then each multiple $\tilde{k}_1$ of $q$ greater than $k_1$ satisfies $\lambda_{k,N}^d \ge \lambda_{\tilde{k}_1,\frac{p}{q}\tilde{k}_1,0}^d$. 
In particular, by \eqref{eq:shear-evalue}, we have $\lambda_{k,N}^d \ge \lambda_{k_1+q,\frac{p}{q}(k_1+q),0}^d=-2\frac{p^2}{q^2}(k_1+q)^2$. 
Therefore, since $\lambda_{k,N}^d$ is the $k$th-smallest eigenvalue in absolute value, \eqref{eq:shear-edef} implies $k \le \mathcal{E}(k_1+q)$. 
Then \eqref{eq:shear-kbound} yields $k \le \frac{\pi p}{q}(k_1+q)^2 + 2 \sqrt{2} (k_1+q) + \frac{\sqrt{2}p}{q} (k_1+q)+1$. Applying the quadratic formula yields $k_1 \ge \sqrt{\frac{qk}{\pi p} - \frac{q}{\pi p} + \frac{1}{2\pi^2}(\frac{2q}{p}+1)^2} - (\frac{1}{\sqrt{2}\pi} + \frac{\sqrt{2}q}{\pi p} + q)$. Noting that $\frac{1}{2\pi^2} (\frac{2q}{p}+1)^2 - \frac{q}{\pi p} > \frac{1}{2\pi^2} (\frac{2q}{p}-1)^2>0+
$, we obtain 
\begin{align}
    k_1 \ge \sqrt{\frac{q k}{\pi p}} - \mleft(\frac{1}{\sqrt{2}\pi} + \frac{\sqrt{2}q}{\pi p} + q \mright). \label{eq:shear-k1bound}
\end{align}

% Shear example cont'd: Cheeger bound
\emph{Step 4:} Choose $k$ and $k_1$ as in step 3, so that $\lambda_{k_1,\frac{p}{q}k_1,0}^d \ge \lambda_{k,N}^d$ by \eqref{eq:shear-kdef}. Then the number of nodal domains in $u_{k_1,\frac{p}{q}k_1,0}^d$ gives a lower bound on $r_k$. This eigenfunction has $2k_1(\frac{p}{q}k_1+1)$ nodal domains by the reasoning in step 1, so $r_k \ge 2k_1(\frac{p}{q}k_1+1)$. Substituting \eqref{eq:shear-k1bound} into this expression gives $r_k \ge 2\mleft(\sqrt{\frac{q k}{\pi p}} - \mleft(\frac{1}{\sqrt{2}\pi} + \frac{\sqrt{2}q}{\pi p} + q \mright)\mright)\mleft(\sqrt{\frac{p k}{\pi q}} - \frac{p}{q}\mleft(\frac{1}{\sqrt{2}\pi} + \frac{\sqrt{2}q}{\pi p} + q \mright)+1\mright)$. Expanding and noting that $p \ge q \ge 1$ 
so $2\sqrt{\frac{qk}{\pi p}}\mleft(1-\frac{2\sqrt{2}}{\pi}\mright) > 0$, $\frac{2\sqrt{2}p}{\pi}+\frac{4\sqrt{2}q}{\pi}>2q+\frac{\sqrt{2}}{\pi}$ and $\frac{p}{\pi^2 q}+\frac{4q}{\pi^2 p} + \frac{4}{\pi^2} > \frac{2\sqrt{2}q}{\pi p}$, we obtain $r_k \ge \frac{2k}{\pi}-2.8\sqrt{pqk}+2pq$. Then the definition of $p$ and $q$ before \eqref{eq:shear-evalue} implies $r_k \ge \frac{2k}{\pi}-2.8q\sqrt[4]{1+\frac{b^2(|\mathrm{T}|+1)}{12(|\mathrm{T}|-1)}}\sqrt{k}+2q^2\sqrt{1+\frac{b^2(|\mathrm{T}|+1)}{12(|\mathrm{T}|-1)}}$. Substituting $k_1 \ge q$ into $r_k\ge 2k_1(\frac{p}{q}k_1+1)$ instead, we additionally obtain $r_k \ge 2q(p+1)=2q^2\sqrt{1+\frac{b^2(|\mathrm{T}|+1)}{12(|\mathrm{T}|-1)}}+2q$. Hence, rewriting the definition of $k$ from step 3 using the definition of $p$ and $q$ before \eqref{eq:shear-evalue}, Theorem \ref{thm:dcheeger} implies that for each $k \ge (\pi q^2 +\sqrt{2}q)\sqrt{1+\frac{b^2(|\mathrm{T}|+1)}{12(|\mathrm{T}|-1)}}+2\sqrt{2}q+1$, we have 
\begin{align}
    \lambda_{k,N}^d \le -\frac{1}{4} (h_{r_k}^d)^2 
    \le -\frac{1}{4} \mleft(h_{\max\mleft\{\left\lceil \frac{2k}{\pi}
      - 2.8q\sqrt[4]{1+\frac{b^2(|\mathrm{T}|+1)}{12(|\mathrm{T}|-1)}}\sqrt{k}
     + 2q^2\sqrt{1+\frac{b^2(|\mathrm{T}|+1)}{12(|\mathrm{T}|-1)}} \right\rceil, 
    2q^2\sqrt{1+\frac{b^2(|\mathrm{T}|+1)}{12(|\mathrm{T}|-1)}}+2q \mright\},N}^d\mright)^2.
\end{align}
Asymptotically for large $k$, this bound becomes $\lambda_{k,N}^d \le -\frac{1}{4} (h_{\frac{2k}{\pi}-O(\sqrt{k}),N}^d)^2$, irrespective of the shear strength $b$ and number of time steps $|\mathrm{T}|$. 
Pre-asymptotically for intermediate-sized $k$, this bound links $\lambda_{k,N}^d$ to $h_{j,N}^d$ for progressively smaller $j$ as the shear strength increases. 
This is because the domain behaves like a cylindrical domain with progressively more mismatched sides, so that gridlike packings of $\mathcal{C}$ with the optimal aspect ratio for each packing element are rarer. 
We cannot apply Theorems \ref{thm:dmiclo} or \ref{thm:dmiclor}, our dynamic versions of Theorems \ref{thm:miclocheeger} and \ref{thm:miclorcheeger}, because $\mathcal{C}$ has non-empty boundary.

    \section{Summary}

The sequence of the $k$th (Neumann or Dirichlet) Cheeger constants for a weighted Riemannian manifold (Definition \ref{def:whcheeg}) and the corresponding $k$-packings with small Cheeger ratio (Definition \ref{def:packing}) together give a global geometric description of weighted Riemannian manifolds. 
There are no existing algorithms for computing $k$-packings for $k \ge 2$ with small Cheeger ratio on arbitrary Riemannian manifolds. 
We proposed some methods for obtaining upper bounds on the Cheeger constants, and for finding packings with quality guarantees, i.e.\ upper bounds on their Cheeger ratios (Theorem \ref{thm:cheeger} and Proposition \ref{thm:seba}). 
We showed that for any Neumann or Dirichlet eigenfunction, its eigenvalue gives an upper bound on the Cheeger constant corresponding to the number of nodal domains in the eigenfunction (Theorem \ref{thm:cheeger}). 
Moreover, we showed that positive-measure collections of the superlevel sets within each nodal domain give rise to packings whose Cheeger ratios are bounded above in terms of the eigenvalue{.} 
This bound is straightforward to compute, but it only produces $k$-packings from eigenfunctions with $k$ nodal domains. 
Sometimes, it is possible to combine geometric information from several eigenfunctions to obtain more features than the number of nodal domains in any single eigenfunction. One obtains disjointly supported functions, each supported on a single feature, by taking linear combinations of eigenfunctions and applying soft thresholding. 
The sparse eigenbasis approximation (SEBA) algorithm \cite{FRS19} can be used to find suitable linear combinations. 
We showed that if the separation into disjointly supported sparse functions is successful, then positive-measure collections of the resulting superlevel sets yield packings with an upper bound on their Cheeger ratios (Proposition \ref{thm:seba}). 
This bound depends only on the largest eigenvalue (in absolute value) and the effectiveness of the separation (i.e.\ the fraction of the $L^2$ mass of the linear combinations that is preserved by the thresholding operation). 

Coherent sets in nonautonomous dynamical systems are sets with small dynamic Cheeger ratio (Definition \ref{def:dcheeg}). 
We showed that positive-measure collections of the superlevel sets within each nodal domain of a dynamic Laplacian eigenfunction yield packings consisting of coherent sets, i.e.\ packings whose dynamic Cheeger ratios are bounded above (Theorem \ref{thm:dcheeger}).
Also, as in the static case, it is sometimes possible to obtain more coherent sets than the number of nodal domains in any single eigenfunction, by taking linear combinations of the first $k$ eigenfunctions and applying soft thresholding. We showed (Proposition \ref{thm:dseba}) that positive-measure collections of the resulting superlevel sets have their dynamic Cheeger ratios bounded above in terms of the largest eigenvalue (in absolute value), and the effectiveness of the separation (fraction of $L^2$ mass preserved by soft thresholding).

       \section{Acknowledgements}
       The authors thank the anonymous referees for their constructive suggestions. The research of GF was partially supported by Australian Research Council Discovery Projects DP180101223 and DP210100357.  CPR's research was supported an Australia Research Training Program scholarship.
    %\clearpage

    \pagestyle{noHeading}
    \bibliographystyle{abbrv}
    \bibliography{master}

\end{document}